\documentclass[10pt,reqno]{amsart}
\usepackage[utf8]{inputenc}
\usepackage[T1]{fontenc}
\usepackage[utf8]{inputenc}
\usepackage[english]{babel}
\usepackage{amsmath}
\usepackage{amssymb}
\usepackage{amsfonts}
\usepackage{dsfont}
\usepackage{float}
\usepackage{graphicx}
\usepackage{wrapfig}
\usepackage{mathtools}
\usepackage{bbm}
\usepackage{amsthm}
\usepackage{ifthen}
\usepackage{graphicx}
\usepackage{hyperref}
\usepackage[ruled,vlined]{algorithm2e}
\usepackage{xcolor}
\usepackage{mathtools}
\usepackage{empheq}
\usepackage{mathrsfs}
\usepackage{multicol}

\newtheorem{theorem}{Theorem}[section]

\newtheorem{lemma}[theorem]{Lemma}
\newtheorem{cor}[theorem]{Corollary}

\newtheorem{assumptions}[theorem]{Assumptions}
\theoremstyle{definition}
\newtheorem{definition}[theorem]{Definition}
\theoremstyle{remark}
\newtheorem{remark}[theorem]{Remark}

\newcommand{\bb}[1]{\mathbb{#1}}

\newcommand{\R}{\bb{R}}

\newcommand{\norm}[1]{\left\lVert#1\right\rVert}


\chardef\bslash=`\\ 





\hfuzz1pc 




\numberwithin{equation}{section}




%


\newcommand{\N}{\mathbb{N}}
\newcommand{\E}{\mathbb{E}}
\newcommand{\PP}{\mathbb{P}}

\def\bm{\left( \begin{array}{cc}}
\def\endm{\end{array}\right)}

 
 
 \providecommand{\norm}[1]{\lVert#1 \rVert}

\newcommand{\be}{\begin{equation}}
\newcommand{\ee}{\end{equation}}
\newcommand{\ba}{\left(\begin{array}{c}}
\newcommand{\ea}{\end{array}\right)}
\newcommand{\bea}{\begin{eqnarray}}
\newcommand{\eea}{\end{eqnarray}}
\newcommand{\bee}{\begin{eqnarray*}}
\newcommand{\eee}{\end{eqnarray*}}
\newcommand{\ben}{\begin{enumerate}}
\newcommand{\een}{\end{enumerate}}

\newcommand{\vertiii}[1]{{\left\vert\kern-0.25ex\left\vert\kern-0.25ex\left\vert #1 
		\right\vert\kern-0.25ex\right\vert\kern-0.25ex\right\vert}}

\usepackage[letterpaper, margin=3cm]{geometry}

\title[Wave equation and DNN]{Bounds on the approximation error for deep neural networks applied to dispersive models: Nonlinear waves
}

\author{Claudio Mu\~noz}  
	\address{Departamento de Ingenier\'{\i}a Matem\'atica and Centro
de Modelamiento Matem\'atico (UMI 2807 CNRS), Universidad de Chile.}
	\email{cmunoz@dim.uchile.cl}
	\thanks{C.M. was partially funded by Chilean research grants ANID 2022 Exploration 13220060, FONDECYT 1231250, and Basal CMM FB210005.}
\date{\today}

\author{Nicol\'as Valenzuela}
\address{Departamento de Ingenier\'{\i}a Matem\'atica, Universidad de Chile.}
	\email{nvalenzuela@dim.uchile.cl}
	\thanks{N.V. was partially funded by Chilean research grants ANID 2022 Exploration 13220060, FONDECYT 1231250, a Latin America PhD Google Fellowship and Basal CMM FB210005.}

\begin{document}

\begin{abstract}
We present a comprehensive framework for deriving rigorous and efficient bounds on the approximation error of deep neural networks in PDE models characterized by branching mechanisms, such as waves, Schr\"odinger equations, and other dispersive models. This framework utilizes the probabilistic setting established by Henry-Labord\`ere and Touzi. We illustrate this approach by providing rigorous bounds on the approximation error for both linear and nonlinear waves in physical dimensions $d=1,2,3$, and analyze their respective computational costs starting from time zero. We investigate two key scenarios: one involving a linear perturbative source term, and another focusing on pure nonlinear internal interactions. 
\end{abstract}

\maketitle


\section{Introduction and Main Results}

\subsection{Approximation of PDEs via DNN}

Deep neural networks have proven to be useful tools for solving partial differential equations, given their ability to act as a \textit{universal approximator} of continuous functions with compact support, or in mathematical terms, the space of deep neural networks is \textit{dense} in the space of continuous functions with compact support \cite{Hornik,Leshno}. Nevertheless, the complexity of the neural network depends on the number of degrees of freedom that define it. Depending on the continuous function, the approximation by neural networks may have a number of degrees of freedom that grows exponentially with the dimension of the domain of said function, a phenomenon known as the curse of dimensionality.

\medskip

In recent years, various impressive results suggest that for some classes of PDEs, deep neural networks succeed in approximating their solutions, and furthermore, the number of degrees of freedom of the neural network is at most polynomial in the dimension of the problem, and inversely proportional to the accuracy of the approximation. Such results span from numerical to theoretical perspectives, with a wide range of applications. Making a fair review of all these works is far from optimal. However, among the PDEs studied with these methods, linear and semilinear parabolic equations \cite{Han,E,Hure,Hutz2,Beck1,Beck2,Jentz1}, stochastic time dependent models \cite{Grohs2,Beck2,Berner1}, linear and semilinear time-independent elliptic equations \cite{LLP,Grohs}, a broad range of fluid equations \cite{Lye,MJK,Mishra2}, and even time dependent/independent nonlocal equations \cite{PLK,Raissi0,Gonnon,Val22,Val23,Castro} stand out. We refer to these works for further developments in this fast growing area of research. Additionally, one can classify the learning methods used to prove the previous results. Among the deep learning methods studied to date, Monte Carlo methods, Multilevel Picard Iterations (MLP) \cite{Hutz2,Beck}, Physics Informed Neural Networks (PINNs) \cite{Raissi2,MJK,Mishra}, and even neural networks approximating infinite-dimensional operators, such as DeepOnets \cite{Chen,Lu,Castro2,CalderonDO}, have been prominent. Other methods in the literature are for instance mentioned in the recent review \cite{Beck23} and the monograph \cite{JKW}.

\medskip

In view of the previous results, it is of key interest to get general, rigorous and efficient deep learning methods and approaches to deal with wave type models, or dispersive equations. By different reasons, these models are not so well-understood from the DNN point of view. In this work, we will present a general framework for considering these models; the essential element being the existence of a Duhamel formulation and a Branching Mechanism to represent solutions in a probabilist fashion. We shall concentrate efforts in linear and nonlinear wave models, leaving other cases such as Schr\"odinger or KdV models for forthcoming works. For the case of wave equations, several authors have considered in detail the DNN approximation of waves, primarily from a numerical point of view. In \cite{MMN}, PINNs were used to solve the wave equation. One of the first rigorous results in this direction (i.e., using PINNs to approximate dispersive models) was obtained in \cite{BKM22}, based in previous work by Mishra and Molinaro \cite{MM22}. See also \cite{Cox, ACL24} for convergence and approximation results for wave type models in a more functional setting. We consider our work to be, to date, one of the first to rigorously solve the nonlinear wave equation using neural networks obtained by Monte Carlo methods.  While finishing this work, we became aware of the very recent work \cite{LBK24}, where rigorous error estimates are computed for PINNs approximating the semillinear wave equation
\[
\partial_t^2 u -\Delta u + a(x) \partial_t u + f(x,u)=0, \quad x\in \Omega, \quad t\in [0,T]. 
\]
It is assumed that $a\geq 0$ a.e. is nontrivial. Compared with these results, our methods below are different and do not involve the use of PINNs. We also consider in principle $a\equiv 0$. Additionally we shall also provide quantitative complexity bounds for the number of required DNNs, a key step to decide if the approximation rule is satisfactory to be numerically implemented. 

\medskip

\subsection{Setting} Let $d=1,2,3$ be the standard physical dimensions. Consider the classical wave equation posed in $\R\times \R^d$:
\begin{equation}\label{eq:linear_wave}
	\begin{cases}
		\partial_{tt} u - \Delta u = F & \hbox{ in } \R\times \R^d,\\
		u(0,\cdot) = f_1 & \hbox{ in } \R^d,\\
		\partial_t u(0,\cdot) = f_2 & \hbox{ in } \R^d.
	\end{cases}
\end{equation}
Here, $u=u(t,x)\in\mathbb R$, $t\geq0$ (without loss of generality), $x\in\mathbb R^d$ is the unknown function, usually modeling some (linear/nonlinear) oscillatory behavior.  On the other hand, in many applications one has source terms that modify the dynamics. Precisely, in \eqref{eq:linear_wave} $F=F(t,x,u)$ is a source term that may also depend on $u$ itself. We shall assume $F:\R\times\R^d \times \R\to \R$ continuous. As for the initial data, we shall assume that $f_2 : \R^d \to \R$ is bounded and continuous, and $f_1 : \R^d \to \R$ is in the class $C^2$ and bounded.\footnote{This extra regularity is technical but we believe that can be lifted with some extra work; however, it will not be the purpose of this already long work.} By making the change of variable $u(t,x) = U(t,x) + f_1(x)$, and $\widetilde{F} = F + \Delta f_1$, we obtain the following simplified problem
\begin{equation}\label{eq:linear_wave2}
	\begin{cases}
		\partial_{tt} U - \Delta U =  \widetilde{F} &  \hbox{ in } \R\times \R^d,\\
		U(0,\cdot) = 0   & \hbox{ in } \R^d,\\
		\partial_t U(0,\cdot) =  f_2  & \hbox{ in } \R^d.
	\end{cases}
\end{equation}
The solution $u$ of \eqref{eq:linear_wave} can be directly obtained from $U$ by solving \eqref{eq:linear_wave2}.  

\medskip

This paper is our first statement on the deep neural network approximation of solutions to the wave equation, in the linear and nonlinear setting, always attempting to preserve the numerical cost of this approximation. This work will be devoted, specially in the nonlinear setting, to the problem of local in time generalization error, or the large time approximation under small data. This is somehow natural, since nonlinear waves may have blow up solutions if the data is large, or the time is sufficiently large. In mathematical terms, we will look for suitable \emph{artificial neural networks} of ReLu type for which a suitable solution, in a sense to be defined below, of \eqref{eq:linear_wave2}, is approximated to any accuracy while preserving the budget of approximation in a certain fixed interval $[0,T]$. It turns out that by several reasons, directly related to the hyperbolic nature of the model, this is a difficult question, somehow more complex than other previously treated cases such as diffusive and elliptic models, and will require additional assumptions and care to obtain satisfactory bounds.

\medskip

 In order to state the main results, we need to introduce the precise notion of solution of \eqref{eq:linear_wave2} required to establish our main results.  First of all, we state the Duhamel's formula for the problem \eqref{eq:linear_wave2}. Recall that $C_b^0([0, \infty)\times\R^d,\R)$ denotes the set of bounded, real-valued continuous functions $f=f(t,x)$, $t\in [0,T)$, $x\in\R^d$. For $(r,z) \in \R_+ \times \R^d$, we consider the fundamental solution of wave in $\mathbb R^d$ : 
\[
\begin{aligned}
	g_2(r,dz) &: = (2\pi)^{-d} \int \frac{\sin(r|\xi|)}{|\xi|} e^{i\xi\cdot z} d\xi,
\end{aligned}
\]
where the integral is taken on $\R^d_\xi$. Formally, $U=g_2$ solves ($\delta_0$ is the space-time Dirac delta)
\[
	\begin{cases}
		\partial_{tt} U - \Delta U =  \delta_0 &  \hbox{ in } \R\times \R^d,\\
		U(0,\cdot) = 0   & \hbox{ in } \R^d,\\
		\partial_t U(0,\cdot) =  0 & \hbox{ in } \R^d.
	\end{cases}
\]
Using Kirchhoff's formula, one gets\footnote{$1_A$ denotes the indicator function of the set $A$.} 
	\begin{equation}\label{eq:g2}
	g_2(r,dz) = \begin{cases}
		\displaystyle \frac 12 1_{ \{ |z|<r \} }dz & \hbox{for } d=1,\vspace{0.1cm} \\ \vspace{0.1cm}
		\displaystyle (2\pi)^{-1} (r^2-|z|^2)^{-\frac 12} 1_{ \{ |z|<r \}} dz & \hbox{for } d=2,\\
		\displaystyle (4\pi r)^{-1} \sigma_r(dz) & \hbox{for } d=3.
	\end{cases}
	\end{equation}
Here $\sigma_r(dz)$ denotes the surface area on $\partial B(0,r)$. Notice that $\norm{g_2(t,\cdot)}_{L^1(\R^d)} = t$, for $t>0.$ The notion of solution for \eqref{eq:linear_wave2} considered in this paper will be given by a continuous, bounded in time and space Duhamel type solution:

\begin{theorem}[\cite{HLT21}]
	Let $d=1,2,3$. Let $\widetilde{F}$, $f_2$ be bounded continuous functions. Then the problem \eqref{eq:linear_wave2} has a unique solution  $U\in C_b^0([0, \infty)\times\R^d,\R)$ of the Duhamel's representation
	\begin{equation}\label{eq:Duhamel}
		U(t,x)=  (g_2(t,\cdot ) * f_2)(x)  + \int_0^{t} (\widetilde{F}(t-s,\cdot)*g_2(s,\cdot))(x)ds.
	\end{equation}
\end{theorem}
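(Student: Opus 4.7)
The plan is to construct the solution directly from the Duhamel formula and then verify uniqueness among bounded continuous functions. The essential input is that, for each $t>0$, the kernel $g_2(t,\cdot)$ in \eqref{eq:g2} is a positive finite Borel measure on $\R^d$ of total mass $t$, and the map $t\mapsto g_2(t,\cdot)$ is weakly continuous against $C_b^0(\R^d,\R)$ test functions. This is immediate in $d=1,2$ from the explicit $L^1_{\rm loc}$ density; for $d=3$, rewriting $(g_2(t,\cdot)*f_2)(x)=\tfrac{t}{4\pi}\int_{\partial B(0,1)} f_2(x-t\omega)\,\sigma_1(d\omega)$ shows joint continuity in $(t,x)$ by dominated convergence together with the continuity of $f_2$.

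First I would prove that the right-hand side of \eqref{eq:Duhamel} lies in $C_b^0([0,T]\times\R^d,\R)$ for every fixed $T>0$. Boundedness follows from the crude estimate
$$\sup_{(t,x)\in[0,T]\times\R^d}|U(t,x)|\ \leq\ T\|f_2\|_\infty+\tfrac{T^{2}}{2}\|\widetilde F\|_\infty,$$
obtained from $\|g_2(t,\cdot)\|_{\mathcal{M}(\R^d)}=t$. Joint continuity of the first term is the remark above; for the Duhamel integral, the integrand $s\mapsto (\widetilde F(t-s,\cdot)*g_2(s,\cdot))(x)$ is continuous in $(s,t,x)$ and dominated by $s\|\widetilde F\|_\infty$ on $[0,T]$, so dominated convergence gives continuity of the integral. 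Next I would check that \eqref{eq:Duhamel} satisfies \eqref{eq:linear_wave2} in the distributional sense: by construction $g_2$ is the forward fundamental solution with zero displacement and $\delta_0$ initial velocity, so $g_2(t,\cdot)*f_2$ solves the homogeneous problem with data $(0,f_2)$, and the standard Duhamel computation $(\partial_{tt}-\Delta)\int_0^t g_2(s,\cdot)*\widetilde F(t-s,\cdot)\,ds = \widetilde F$ uses only $g_2(0,\cdot)=0$ and $\partial_s g_2(s,\cdot)\big|_{s=0}=\delta_0$, both of which hold distributionally.

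For uniqueness among bounded continuous Duhamel solutions with the same data, the difference $W$ of any two solutions satisfies the homogeneous Duhamel identity; since $\widetilde F$ in \eqref{eq:linear_wave2} does not depend on $W$, this forces $W\equiv 0$ immediately. Uniqueness at the PDE level, if desired, follows from the classical finite speed of propagation for the wave operator applied to the difference of two $C_b^0$ solutions. The main technical obstacle I anticipate is controlling the spherical-mean convolution in $d=3$, where $g_2(t,\cdot)$ is a singular measure rather than an integrable density; the change of variables $z=t\omega$ above is what reduces the problem to the uniform continuity of $f_2$ on compact sets, which is the only nontrivial place where the boundedness and continuity assumptions on the data enter.
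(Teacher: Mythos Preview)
The paper does not prove this statement; it simply cites it from \cite{HLT21} and moves on. Your proposal is a correct and self-contained sketch of the standard argument: you use the explicit form of $g_2$ in each dimension, the fact that its total mass is $t$, and dominated convergence to get continuity and local-in-time boundedness of the Duhamel formula, then invoke the usual distributional computation to check that the formula solves \eqref{eq:linear_wave2}.

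One small point worth flagging: your bound $T\|f_2\|_\infty+\tfrac{T^2}{2}\|\widetilde F\|_\infty$ only gives $U\in C_b^0([0,T]\times\R^d)$ for each finite $T$, not $U\in C_b^0([0,\infty)\times\R^d)$ as the theorem is literally stated. This is not a flaw in your argument but rather a quirk of the statement itself (for generic bounded $f_2,\widetilde F$ the solution grows in $t$); in practice the paper only ever uses the result on finite time horizons $[0,T]$, so your version is exactly what is needed downstream. Your treatment of the $d=3$ case via the spherical-mean change of variables is the right way to handle the singular measure, and your uniqueness remark is correct: since $\widetilde F$ here is a prescribed function independent of $U$, the Duhamel formula \emph{defines} $U$ and there is nothing further to prove for uniqueness in the Duhamel sense.
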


From now on we shall omit the tilde in $\widetilde F$, and simply denote it $F$. In  \cite{HLT21}, Henry-Labordère and Touzi obtained a probabilistic representation of  solutions to \eqref{eq:Duhamel}. In Subsection \ref{sec:1.3}, and later in Theorem \ref{prop:sol-nonlinear}, we will describe in detail this probabilistic representation, obtained via a branching mechanism, the basis for the approximation by artificial neural networks. 

\section{Representation of waves}\label{sec:2}

\subsection{Duhamel's formula for general wave models} In this section we recall some well-known facts about the existence of Henry-Labordère and Touzi probabilistic solutions to the Duhamel's representation. We closely follow  \cite{HLT21} in the particular case of the wave equation. First of all, we introduce the $\mathbb{C}^2$-valued function $\hat{g}:=(\hat{g}_1,\hat{g}_2)$.
\[
\hat{g}(t,\xi) := (2\pi)^{-\frac d2} e^{tB(\xi)^T}\hbox{e}_1, \qquad t\geq 0,\quad  \xi \in \R^d,\quad B(\xi) := \begin{pmatrix}
	0 & 1\\
	-|\xi|^2 & 0
\end{pmatrix},
\]
where $(\hbox{e}_1,\hbox{e}_2)$ is the canonical basis of $\R^2$. Recall that this problem is well-posed in the sense that for all $t \geq 0$, $\hat g(t,\cdot) \in \mathcal S'$. One can then introduce the so-called Green functions as the inverse Fourier transform with respect to the space variable:
\[
g(t,\cdot) := \mathfrak{F}^{-1}\hat{g}(t,\cdot), \qquad t \in [0,T),
\]
in the distributional sense. In this setting, one has for $(r,z) \in \R_+ \times \R^d$,
\[
\begin{aligned}
g_1(r,dz) = (2\pi)^{-d} \int \cos(r|\xi|) e^{i\xi \cdot z}d\xi = \partial_r g_2(r,dz),\quad	g_2(r,dz) = (2\pi)^{-d} \int \frac{\sin(r|\xi|)}{|\xi|} e^{i\xi\cdot z} d\xi.
\end{aligned}
\]
The value of $g_2$ is already given in \eqref{eq:g2}. Notice that in this case, by the form of the Green functions that $g_1(t,\cdot) \notin L^1(\R)$, even  when $d=1$. For this reason \eqref{eq:linear_wave2} is introduced, in order to use only the Green function $g_2$. 

\medskip

Some additional notation is required. Let $\mu_2(t,\cdot)$ be given by
	\begin{equation}\label{eq:mu2}
	\mu_2(t,dz) := \frac{g_2(t,dz)}{t}, \quad t>0, \quad \hbox{$g_2$ as in \eqref{eq:g2}}.
	\end{equation}
	Notice that $\mu_2(t,\cdot)$ defines a probability measure on $\R^d$. Let $Z_{t,2}$ be the random variable whose density is $\mu_2(t,\cdot)$, namely,
	\begin{equation}\label{Zt2}
	\PP \left( Z_{t,2} \in dz \right) = \mu_2(t,dz).
	\end{equation}
	The random variable $Z_{t,2}$ satisfying \eqref{Zt2} exists in dimension $d=1,2,3$, see Remark \ref{rem:Z}. Finally, for $x \in \R^d$ define 
	\begin{equation}\label{Xt2}
	X_{t,2}:= x + Z_{t,2},
	\end{equation}
	chosen independent of $\tau$.

\begin{remark}[On the existence of $Z_{t,2}$]\label{rem:Z} From a change of variable $z \to \frac{z}{t}$ in the definition of $g_2(t,dz)$ in \eqref{eq:g2}, the random variable $Z_{t,2}$ has the form $Z_{t,2}:= tZ$, where:
		\begin{itemize}
			\item in dimension $d=1$, $Z$ has a uniform distribution on $[-1,1]$,
			\item in dimension $d=2$, the law of $Z$ is defined by the density $\frac{1}{2\pi}\frac{1}{\sqrt{1-z^2}}1_{ \{ |z|<1 \} }$,
			\item in dimension $d=3$, the law of $Z$ is $\frac{1}{4\pi}\mu_{S^2}(dz)$, where $\mu_{S^2}$ denotes the volume measure on the unit sphere.
		\end{itemize}
 In the three cases $Z$ takes values in the centered unit radius closed ball $\overline{B}(0,1)$, centered unit radius open ball $B(0,1)$ and centered unit radius sphere $\partial B(0,1)$ respectively, and therefore for $d=1,2,3$ we have that $|Z|\leq 1$.
\end{remark}

%
Following \cite{HLT21}, $g_2$ is part of a more general framework, that we explicit here in view of forthcoming extensions of this work.

\begin{lemma}
	Fix $T>0$. For $d=1,2,3,$ the following are satisfied:
	\begin{itemize}
		\item[(i)] $(t,x) \longmapsto (g_2(t,\cdot)*\phi)(x)$ is continuous on $[0,T)\times\R^d$, for all bounded continuous function $\phi$ on $\R^d$.
		\item[(ii)] $g_2(t,\cdot)$ may be represented by a signed measure 
		\[
		g_2(t,dx)=g_2^+(t,dx) - g_2^-(t,dx),
		\] 
		with total variation measure $|g_2|:=g_2^+ + g_2^-$, absolutely continuous with respect to some probability measure $\mu_2$. 
		\item[(iii)] Additionally, the corresponding densities $\gamma_2,\gamma_2^+$ and $\gamma_2^-$, defined by
		\[
		\begin{aligned}
		g_2^{+}(t,dx) = &~{} \gamma_2^+(t,x)\mu_2(t,dx), \\
		g_2^-(t,dx) = &~{} \gamma_2^-(t,x)\mu_2(t,dx).
		\end{aligned}
		\]
		\item[(iv)]	$\gamma_2  :=   \gamma_2^+ - \gamma_2^{-}$ satisfies, for all $t \in [0,T]$,
		\[
		\|\gamma_2(t,\cdot)\|_{\infty}<\infty, \quad \gamma_2(t,\R^d)<\infty, \quad  \hbox{and} \quad \gamma_2(\cdot,\R^d) \in L^1([0,t]).
		\]
	\end{itemize}
\end{lemma}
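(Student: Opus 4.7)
The plan is to exploit the fact that, in all three dimensions $d=1,2,3$, the formulas \eqref{eq:g2} show that $g_2(t,\cdot)$ is already a \emph{nonnegative} measure, so its Jordan decomposition is trivial. For item (ii), I would set
\[
g_2^+(t,dz) := g_2(t,dz), \qquad g_2^-(t,dz) := 0,
\]
so that $|g_2| = g_2^+ + g_2^- = g_2$. A direct integration of \eqref{eq:g2} gives $\|g_2(t,\cdot)\|_{L^1(\R^d)}=t$ in every dimension (by explicit computation in $d=1$, polar coordinates in $d=2$, and $\sigma_r(\partial B(0,r)) = 4\pi r^2$ in $d=3$). Consequently, the normalization $\mu_2(t,dz) = g_2(t,dz)/t$ defined in \eqref{eq:mu2} is a probability measure, with respect to which $|g_2(t,\cdot)|$ is absolutely continuous with \emph{constant} density. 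For item (iii), this yields
\[
\gamma_2^+(t,x)\equiv t, \qquad \gamma_2^-(t,x)\equiv 0, \qquad \gamma_2(t,x) = \gamma_2^+(t,x) - \gamma_2^-(t,x) \equiv t.
\]
Item (iv) is then immediate: $\|\gamma_2(t,\cdot)\|_\infty = t \leq T$; the total mass $\gamma_2(t,\R^d) = \int_{\R^d} \gamma_2(t,x)\,\mu_2(t,dx) = t < \infty$; and $s\mapsto s$ obviously lies in $L^1([0,t])$ for any $t\in[0,T]$.

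For item (i), I would use the change of variables $z = rw$, as already implicit in Remark \ref{rem:Z}, to rewrite the convolution in a form that absorbs the $r$-dependence of the support:
\[
(g_2(r,\cdot)*\phi)(x) = r\int_{\overline B(0,1)} \phi(x-rw)\,\nu(dw),
\]
where $\nu$ is a fixed probability measure on $\overline B(0,1)$, independent of $r$ (uniform on $[-1,1]$ for $d=1$; the density $(2\pi)^{-1}(1-|w|^2)^{-1/2}\mathbf 1_{|w|<1}$ for $d=2$; the normalized surface measure on $S^2$ for $d=3$). The scaling factor $r^d\,dw = dz$ (or $r^2\,d\sigma = d\sigma_r$ in $d=3$) cancels against the explicit $r^{-1}$ or $(r^2-|z|^2)^{-1/2}$ weight to leave exactly one power of $r$ outside the integral. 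Then for any sequence $(r_n,x_n)\to(r,x)$ in $[0,T)\times\R^d$, continuity of $\phi$ gives $\phi(x_n-r_n w)\to\phi(x-rw)$ pointwise in $w$, the integrand is uniformly bounded by $\|\phi\|_\infty$, and dominated convergence combined with $r_n\to r$ yields the required joint continuity.

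The main (and only mild) obstacle is the dimension-by-dimension verification that the scaling $z=rw$ produces the same probability measure $\nu$ independent of $r$; once this bookkeeping is done, (ii)--(iv) follow from direct inspection of the densities and (i) reduces to a routine application of dominated convergence.
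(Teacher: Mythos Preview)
Your proposal is correct and follows essentially the same route as the paper: the paper's proof simply cites the explicit formulas \eqref{eq:g2} and \eqref{eq:mu2} and records that $\gamma_2(t,x)=t$, which is exactly the content of your decomposition $g_2^+=g_2$, $g_2^-=0$, $\gamma_2\equiv t$. Your treatment of (i) via the scaling $z=rw$ and dominated convergence is the natural way to unpack the paper's one-line appeal to \eqref{eq:g2}, so there is no substantive difference in approach.
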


\begin{proof}
(i) and (ii) are a consequence of \eqref{eq:g2}. Finally, (iii) follows from \eqref{eq:mu2} and $\gamma_2(t,x) = t$. 
\end{proof}

\subsection{Probabilistic representation of linear wave models} \label{sec:1.3}
For a probabilistic representation of the solution of \eqref{eq:linear_wave2} we need to introduce some random variables{\color{black}. Following \cite{HLT21}}, let $(\Omega,\mathcal{F},\PP)$ be a probability space \emph{supporting two given random variables $\tau$ and $X_{t,2}$, with the additional requirement:}

\begin{assumptions}[Assumption on $\tau$] One has that  
	\begin{equation}\label{def_rho}
	\PP(\tau \in dt) = \rho(t) 1_{t\geq 0},
	\end{equation}
	for some $C^0(\R_+,\R)$ density function $\rho>0$ on $(0,\infty)$. We shall denote 
	\begin{equation}\label{rho_bar}
	\overline{\rho}(t):= \int_t^{\infty} \rho(s)ds.
	\end{equation}
\end{assumptions}

The next result gives the probabilistic representation of the solution of \eqref{eq:linear_wave2}. Again, for a general form of this theorem, the reader can consult \cite{HLT21}.

\begin{theorem}\label{prop:sol-linear}
	Let $f_2$, $\widetilde{F}$ be bounded continuous functions. Then the unique $C_b^0([0,{\color{black}\infty)}\times \R^d,\R)$ solution of the problem \eqref{eq:Duhamel} is given by
	\begin{equation}\label{eq:sol-linear}
	U(t,x) = \E\left[1_{\{\tau\geq t\}} \frac{t}{\overline{\rho}(t)} f_2(X_{t,2}) + 1_{\{\tau<t\}} \frac{\tau}{\rho(\tau)} \widetilde{F}(t-\tau,X_{\tau,2})\right], \quad t<{\color{black}\infty}, \quad x \in \R^d.
	\end{equation}
\end{theorem}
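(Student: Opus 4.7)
The strategy is to take the existence and uniqueness of the $C_b^0([0,\infty)\times\R^d,\R)$ Duhamel solution stated in the first theorem as given, and simply verify that the right-hand side of \eqref{eq:sol-linear} coincides with the right-hand side of \eqref{eq:Duhamel}. Thus the probabilistic formula must equal $U$. Split the expectation by linearity into the two pieces corresponding to $\{\tau\geq t\}$ and $\{\tau<t\}$, and handle them separately.

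For the $\{\tau\geq t\}$ piece, the deterministic factors $t$ and $1/\overline{\rho}(t)$ pull out. Since $X_{t,2}$ is independent of $\tau$ (imposed right after \eqref{Xt2}), the expectation factors as
\[
\frac{t}{\overline{\rho}(t)}\,\PP(\tau\geq t)\,\E[f_2(X_{t,2})].
\]
By \eqref{rho_bar}, $\PP(\tau\geq t)=\overline{\rho}(t)$, cancelling the denominator. Since $Z_{t,2}$ has density $\mu_2(t,\cdot)$ by \eqref{Zt2} and $g_2(t,dz)=t\,\mu_2(t,dz)$ from \eqref{eq:mu2}, the remaining factor becomes, after using the radial symmetry of $g_2$,
\[
t\int_{\R^d} f_2(x+z)\,\mu_2(t,dz)=\int_{\R^d} f_2(x-z)\,g_2(t,dz)=(g_2(t,\cdot)*f_2)(x),
\]
which is the first summand in \eqref{eq:Duhamel}.

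For the $\{\tau<t\}$ piece, condition on $\tau$ using its density $\rho$ from \eqref{def_rho}. The joint distribution of $(\tau,Z_{\tau,2})$ is $\rho(s)\,\mu_2(s,dz)\,ds$, by independence of $\tau$ and the position variables together with the marginal law of $Z_{s,2}$. Fubini applies trivially since $\widetilde{F}$ is bounded and $\int_0^t s\,ds<\infty$, yielding
\[
\E\!\left[1_{\{\tau<t\}}\frac{\tau}{\rho(\tau)}\widetilde{F}(t-\tau,X_{\tau,2})\right]=\int_0^t \rho(s)\,\frac{s}{\rho(s)}\,\E[\widetilde{F}(t-s,x+Z_{s,2})]\,ds.
\]
The $\rho(s)$ factors cancel, and reusing $s\,\mu_2(s,dz)=g_2(s,dz)$ one arrives at
\[
\int_0^t\!\int_{\R^d}\widetilde{F}(t-s,x+z)\,g_2(s,dz)\,ds=\int_0^t (\widetilde{F}(t-s,\cdot)*g_2(s,\cdot))(x)\,ds.
\]
Adding both contributions gives exactly \eqref{eq:Duhamel}; since $f_2$ and $\widetilde F$ are bounded continuous and $g_2(t,\cdot)*\phi$ is continuous by the preceding lemma, the right-hand side of \eqref{eq:sol-linear} is in $C_b^0$ and must coincide with the unique such solution $U$.

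\textbf{Main obstacle.} The manipulations are essentially bookkeeping; the only conceptual care needed is in interpreting $X_{\tau,2}$ for the random time $\tau$ and identifying the joint law $(\tau,Z_{\tau,2})$. This is exactly why the assumption of independence between $\tau$ and the family $(Z_{s,2})_{s>0}$ is built into \eqref{Xt2}, and why the weighting factors $1/\overline{\rho}(t)$ and $1/\rho(\tau)$ appear in \eqref{eq:sol-linear} — they are precisely the Radon–Nikodym corrections that remove the probabilistic weights and recover the deterministic integrals in \eqref{eq:Duhamel}.
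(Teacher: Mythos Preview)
Your proposal is correct and matches the paper's approach: the paper does not give a detailed proof of this theorem but defers to \cite{HLT21}, and the sketch it does provide (in the remark immediately following the statement) is precisely your computation — rewrite each term of \eqref{eq:Duhamel} as an expectation via $g_2(t,dz)=t\,\mu_2(t,dz)$ for the spatial convolution and via the density $\rho$ for the time integral, with the factors $1/\overline{\rho}(t)$ and $1/\rho(\tau)$ cancelling the probabilistic weights.
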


\begin{remark}\label{rem:sol-linear}
The representation \eqref{eq:sol-linear} is only valid when $\widetilde F$ does not depend on $U$. For the representation in the non-linear case, see Theorem \ref{prop:sol-nonlinear} bellow.
\end{remark}

\begin{remark}
To obtain \eqref{eq:sol-linear} one uses from Duhamel's representation \eqref{eq:Duhamel} that $g_2(t,dz) = t \mu_2(t,dz)$, and the convolution has been considered as the expected value of a function depending on the random variable $X_{t,2}$. The time-dependent integral that appears on \eqref{eq:Duhamel} has been considered as the expected value of a function depending on the random variable $\tau$.
\end{remark}

\begin{remark}[On the meaning of \eqref{eq:sol-linear}] \label{rem:Mean-SL} The random variables $\tau$ and $(X_{t,2})_{t\geq 0}$ in \eqref{Xt2} can be seen as the life-length and the trajectory of a simulated particle that was born at time $0$, and starting at the position $x$. This particle will be assigned a value under the action of the Duhamel's wave dynamics. The solution $U(t,x)$ in \eqref{eq:sol-linear} will reflect the dynamics of the particle in the sense that $U$ at time $t$ and position $x$ will be the expected value of the assigned value of such a particle that was born at time $0$ at the position $x$. Depending on the survival time of the particle within the time interval $[0,t]$, the assigned value will take two different options:

\medskip

\begin{enumerate}

\item If the particle is alive in time $t$ (i.e. $\tau \geq t$), then stays at the position $X_{t,2}$ at time $t$. Then we will assign to the particle the value $\frac{t}{\overline \rho(t)} f_2(X_{t,2})$. The wave dynamics is included in the initial condition $f_2$, and in the Green function $g_2$ at time $t$, represented as the law of $X_{t,2}$ and the magnitude $t$, followed by \eqref{eq:mu2}.

\medskip

\item If the particle did not survive by time $t$ (i.e. $\tau < t$), then it is at the position $X_{\tau,2}$ at time $\tau$. The particle will be assigned the value $\frac{\tau}{\rho(\tau)} \widetilde F(t-\tau,X_{\tau,2})$. In this case, the dynamics is given by the source term $\widetilde F$ at time $t-\tau$ and by the Green function $g_2$ at time $\tau$, represented as the law of $X_{\tau,2}$ and the magnitude $\tau$.
\end{enumerate}
\end{remark}
Later, in Theorem \ref{thrm:main_linear}, we provide a DNN approximation result in the case of linear waves \eqref{eq:sol-linear}, that serves as first approach to the more demanding nonlinear case.

\section{Nonlinear waves: the branching mechanism}\label{Sec:2b}

 As we said in Remark \ref{rem:sol-linear}, Theorem \ref{prop:sol-linear} is not valid when $\widetilde F = \widetilde F(t,x,U)$ or even $\widetilde F = \widetilde F(t,x,U,\nabla U)$. In this section we present the results of Herny-Labordère and Touzi related to a non-linear source term of the form
\begin{equation}\label{eq:source-nonlinear}
F(t,x,u) := \sum_{j=0}^{\infty} q_jc_j(t,x)u^j(t,x),
\end{equation}
where for each $j\in \N$, $c_j(t,x):(0,\infty)\times \R^d \to \R$ is a bounded continuous function, and for all $j \geq 0$ one has 
\begin{equation}\label{q_j}
q_j \geq 0, \quad \sum_{j\geq 0} q_j = 1.
\end{equation}
 The series defining $F$ in \eqref{eq:source-nonlinear} may not be convergent in general. For that reason, and following \cite{HLT21}, we shall assume the following conditions ensuring a well-defined $F$:

\begin{assumptions}\label{OnF}
The series defining the nonlinearity $F$ in \eqref{eq:source-nonlinear} is well-defined, in the following sense: the series $H_1(s)$, defined as
\[
H_1(s) := \sum_{j=0}^{\infty} q_j \norm{c_j}_{\infty} s^j, \qquad \norm{c_j}_{\infty}:= \sup_{(t,x)\in (0,\infty)\times \R^d} |c_j(t,x)|,
\]
has strictly positive radius of convergence $R_1 \in (0,\infty]$. In addition, it is required that
\begin{enumerate}
\item $0<r_1 := \norm{f_2}_{\infty} \norm{\gamma_2}_{\infty} < R_1$.
\item There exist constants $T,s_1,r_1>0$, $r_1<s_1$, such that 
\[
 \frac1T\int_{r_1}^{s_1} \frac{ds}{H_1(s)} = \norm{\gamma_2}_{\infty}.	
\]
\end{enumerate}
\end{assumptions}
The last assumption ensures that $F$ in \eqref{eq:source-nonlinear} is nontrivial.

\medskip

All the cases considered in this work (Theorem \ref{MT1}) will satisfy these hypotheses, but recall that more general nonlinearities may be also allowed. For the statement of Theorem \ref{prop:sol-nonlinear}, describing a stochastic representation of nonlinear wave models, thanks to \eqref{q_j} we first introduce a random variable $J$ defined by its probabilities $\PP(J=j) = q_j$. Then we consider the so-called \textit{branching mechanism}:

\begin{definition}[Branching mechanism, see \cite{HLT21}]\label{BM}
Let $t\geq 0$, $k\in \cup_{n\in \N}\N^n$, $T^t_k\geq 0$ and $X_{T_k^t}^k \in\R^d$ be such that the following are satisfied:
\begin{enumerate}
\item Start with a particle indexed by 0. Let $\tau_0$ be an i.i.d copy of $\tau$ and let $J_0$ be an i.i.d. copy of $J$. If ($a\wedge b := \min\{a,b\}$)
\[
T^t_0:=\tau_0 \wedge t<t,
\]
 then the particle 0 branches into $J_0$ offspring particles indexed by $(0,1),\ldots,(0,J_0)$.

 \medskip

\item For any particle indexed by an $n$-tuple $k$ we do the same framework. First denote $k_{-}$ as {\bf the ancestor of $k$}, which is just the vector $k$ after deleting the last component. Let $\tau_k$ and $J_k$ i.i.d. copies of $\tau$ and $J$, respectively. Define 
\begin{equation}\label{eq:Ttk}
T^t_k := (T^t_{k_{-}} + \tau_k) \wedge t, \qquad T_{0_{-}}^t:= 0.
\end{equation}
If $T^t_k < t$, then the particle $k$ branches into $J_k$ offspring particles indexed by the $(n+1)$-tuples $(k,1),\ldots,(k,J_k)$.

\medskip

\item Denote by $K_t$ the set of particles $k$ such that $T_{k}^t = t$, and $\overline K_t$ be the set of particles $k$ such that $T_{k}^t \leq t$.

\medskip

\item For any particle $k$ define $Z_{T_k^t}^k$ as the random variable with conditional probability
\begin{equation}\label{eq:DTk}
\PP \left( Z_{T_k^t}^k \in dz \Big| \Delta T_k^t \right) = \mu_2\left( \Delta T_k^t,dz \right), \hspace{.5cm} \hbox{where} \hspace{.5cm} \Delta T_k^t := T_k^t - T_{k_-}^t.
\end{equation}
Finally define
\begin{equation}\label{eq:XTk}
X_0^0=x, \qquad X_{T_{0_{-}}}^{0_{-}}:= 0, \qquad  X_{T_k^t}^k := X^{k_{-}}_{T_{k_{-}}^t} + Z_{T_{k}^t}^t.
\end{equation}
\end{enumerate}
\end{definition}
The solution to the nonlinear wave equation in terms of a branching process is given by the following result.

\begin{theorem}[Theorem 3.2 in \cite{HLT21}]\label{prop:sol-nonlinear} Let $f_2 \in C_b^0(\R^d,\R)$ and $c_j \in C_b^0((0,\infty)\times \R^d,\R)$, $T^t_k$ as in \eqref{eq:Ttk}, $\Delta_k^t$ as in \eqref{eq:DTk} and $X_{T_k^t}^k$ given by \eqref{eq:XTk}. Then, under Assumptions \ref{OnF}, one has that
\begin{equation}\label{eq:sol-nonlinear}
U(t,x) = \E \left[ \prod_{k \in K_t} \frac{\Delta T_k^t}{\overline \rho(\Delta T_k^t)}f_2\left( X_{T_k^t}^k\right) \prod_{k \in \overline K_t \setminus K_t} \frac{\Delta T_k^t}{\rho(\Delta T_k^t)} c_{J_k}\left(t - T_k^t, X_{T_k^t}^k\right)\right],
\end{equation}
is a well-defined $C_b^0([0,\infty)\times \R^d,\R)$ function solving the non-linear wave equation in its Duhamel form \eqref{eq:Duhamel}, 
with source term $\widetilde F=F$ as in \eqref{eq:source-nonlinear}.
\end{theorem}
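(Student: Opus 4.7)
The plan is to proceed in two main steps: first establish that the expectation defining $U(t,x)$ in \eqref{eq:sol-nonlinear} converges absolutely and yields a bounded continuous function, and second verify by conditioning on the first branching event that this $U$ satisfies the Duhamel equation \eqref{eq:Duhamel} with nonlinearity $F$ as in \eqref{eq:source-nonlinear}. The unifying tool is the independence built into Definition \ref{BM}: once we condition on $(\tau_0,J_0)$, the $J_0$ subtrees are i.i.d.\ copies of the full branching process, shifted in space by the position of the parent and in time by $\tau_0$.

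For well-definedness and boundedness I would introduce the scalar majorant $v(t)$ obtained by replacing $f_2$ with $\|f_2\|_\infty$ and each $c_j$ with $\|c_j\|_\infty$ inside \eqref{eq:sol-nonlinear}. Conditioning on $(\tau_0,J_0)$, using the independence of the $J_0$ subtrees, and recalling from the lemma in Section \ref{sec:2} that $\gamma_2(s,z)=s$, so that $\|\gamma_2(s,\cdot)\|_\infty=s$, an induction on the generation of the tree (equivalently a Picard argument) yields that $v$ satisfies the scalar integral equation
\[
v(t) = t\,\|f_2\|_\infty + \int_0^t s\, H_1(v(t-s))\, ds.
\]
Assumption \ref{OnF} is precisely calibrated so that this equation admits a solution staying below $s_1<R_1$ on $[0,T]$: one compares $v$ against the solution of the ODE $V'(t)=\|\gamma_2\|_\infty H_1(V(t))$ with $V(0)=r_1$, whose implicit form $\int_{r_1}^{V(t)} ds/H_1(s)=\|\gamma_2\|_\infty\, t$ reaches $s_1$ exactly at time $T$ by Assumption \ref{OnF}(2). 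This gives $|U(t,x)|\leq v(t)\leq s_1$ uniformly on $[0,T]\times\R^d$, so the expectation is finite.

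For the Duhamel identity I would condition the expectation \eqref{eq:sol-nonlinear} on the value of $T_0^t=\tau_0\wedge t$ and of $J_0$. On $\{\tau_0\geq t\}$ one has $K_t=\{0\}$ and $\overline K_t\setminus K_t=\emptyset$, so the integrand equals $\frac{t}{\overline\rho(t)}f_2(X_{t,2})$ and occurs with $(\tau_0\geq t)$-probability $\overline\rho(t)$. On $\{\tau_0=s<t,\,J_0=j\}$, particle $0$ branches into $j$ i.i.d.\ sub-processes, each an independent copy of the whole branching mechanism started from $X_{s,2}$ with remaining horizon $t-s$; by the branching property the inner conditional expectation factorizes as
\[
\frac{s}{\rho(s)}\, c_j(t-s, X_{s,2})\, \bigl(U(t-s, X_{s,2})\bigr)^{j}.
\]
Integrating against the law of $\tau_0$, summing over $j$ with weights $q_j$, and using $g_2(s,dz)=s\,\mu_2(s,dz)$ gives
\[
U(t,x) = (g_2(t,\cdot)*f_2)(x) + \int_0^t \bigl(g_2(s,\cdot)*F(t-s,\cdot,U(t-s,\cdot))\bigr)(x)\, ds,
\]
which is precisely \eqref{eq:Duhamel} with source term $F$ from \eqref{eq:source-nonlinear}.

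Continuity of $U$ on $[0,\infty)\times\R^d$ then follows by dominated convergence from the majorant bound $v$ together with continuity of $f_2$, $c_j$ and of convolutions with $g_2$ (point (i) of the preceding lemma). I expect the main obstacle to be step one: without the calibration in Assumption \ref{OnF}(2) the majorant equation can run out of the radius of convergence of $H_1$ in finite time, and then the product over $\overline K_t\setminus K_t$ can blow up in $L^1$ despite each individual factor being well defined. Tracking that the majorizing ODE stays strictly below $R_1$ on $[0,T]$ is the crux that makes the branching representation meaningful; once this is in hand, the Duhamel verification closes by an induction on tree depth (truncating at depth $n$, checking Duhamel for the truncated $U_n$, and passing to the limit using the majorant).
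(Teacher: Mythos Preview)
The paper does not prove this theorem: it is quoted verbatim as Theorem 3.2 of \cite{HLT21} and followed only by explanatory remarks, with no argument supplied. So there is no ``paper's own proof'' to compare against; your proposal is effectively a reconstruction of the Henry-Labord\`ere--Touzi argument.

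That said, your sketch follows the standard route for branching representations and is structurally sound. The two-step plan (majorant bound to ensure integrability, then conditioning on $(\tau_0,J_0)$ to recover Duhamel via the branching property) is exactly how such identities are established. One point to tighten: in the majorant step you write $v(t)=t\|f_2\|_\infty+\int_0^t s\,H_1(v(t-s))\,ds$ and then compare with the ODE $V'=\|\gamma_2\|_\infty H_1(V)$. The integral equation is second order in spirit (because of the factor $s$ coming from $\|g_2(s,\cdot)\|_{L^1}=s$), so the comparison with a first-order ODE needs a short justification---typically one differentiates the integral equation once and uses $\|\gamma_2\|_\infty=T$ on $[0,T]$ to get a differential inequality that feeds into Assumption \ref{OnF}(2). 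Also, your initial value should be $r_1=\|f_2\|_\infty\|\gamma_2\|_\infty$ rather than $\|f_2\|_\infty$ alone, matching the notation in Assumptions \ref{OnF}. With those adjustments the argument closes.
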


Some remarks are in order:

\begin{remark}[On the meaning of \eqref{eq:sol-nonlinear}] The identity \eqref{eq:sol-nonlinear} looks more difficult to understand than the one appearing in the linear case \eqref{eq:sol-linear}. However, its meaning follows a similar idea. Indeed, as in the linear case, and given the branching mechanism, one can see the random variables $\tau_k$ and $X_{T_k^t}^k$ as the life-length and the position at time $T_k^t$ of a particle (with label $k$) that was born at time $T_{k_-}^t$ starting at position $X_{T_{k_-}^t}^{k_-}$. Here, $k_-$ indicates the parent of $k$. Every particle will carry a value depending on whether or not survives at time $t$, similar as in Remark \ref{rem:Mean-SL}. The solution $U(t,x)$ in \eqref{eq:sol-nonlinear} will have a relation with all particles that were born up to time $t$, in the sense that $U$ at time $t$ and position $x$ will be the expected value of the multiplication of the assigned value of all the particles. Every particle $k$ will have two different options:

\medskip

\begin{enumerate}
\item If $k$ is alive in time $t$, then it was living for a time $t-T_{k_-}^t$ within the interval $[0,t]$. Therefore, the particle will carry the value $\frac{t-T_{k_-}^t}{\overline \rho(t-T_{k_-}^t)}f_2 \left(X_{T_k^t}^k \right)$, where we have considered the life time of the particle in $[0,t]$, and its position at time $T_k^t = t$. This is precisely the first term in the RHS of \eqref{eq:sol-nonlinear}.

\medskip

\item If $k$ died before $t$, then it lived for a time $\tau_k$ within the interval $[0,t]$. Therefore, $k$ will carry the value $\frac{\tau_k}{\rho(\tau_k)} c_{J_k}(t-T_{k}^t,X_{T_k^t}^k)$, $c_j$  as in \eqref{eq:source-nonlinear}. In this case we have also considered the life time of $k$ in $[0,t]$, and its position at time $T_k^t < t$. In addition, we have considered the time $t-T_{k}^t$ where the particle is not alive inside the interval of time $[0,t]$. This is precisely the second term in the RHS of \eqref{eq:sol-nonlinear}.
\end{enumerate}

\medskip
As said before, the assigned value of each particle $k$ is similar to the behavior of the simulated particle in the linear case, with the difference that each $k$ (excepting the first particle) will be born at a random time $T_{k_-}^t$ and it will start at a position $X_{T_{k_-}^t}^{k}$ (also randomly chosen). Finally, recall that every particle $k$ will take a value determined by exactly one of the terms that define the source term $F$, depending on how many particles branch from $k$.
\end{remark}

\begin{remark} If one wants to recover the formula obtained in the linear case \eqref{eq:sol-linear}, \eqref{eq:sol-nonlinear} becomes 
\[
U(t,x) = \E\left[ \prod_{k \in \overline K_t} \left({\bf 1}_{\{k \in K_t\}} \frac{\Delta T_{k}^t}{\overline \rho (T_k^t)} f_2 \left(X_{T_k^t}^k\right) + {\bf 1}_{\{k \notin K_t\}} \frac{\Delta T_k^t}{\rho (T_k^t)} c_{J_k}\left(t-T_k^t,X_{T_k^t}^k\right)\right)\right].
\]
In this new representation, one can see that every particle $k$ alive at time $\leq t$ (the set $\overline K_t$) splits into two disjoint sets, with one particular outcome for each case.
\end{remark}

In order to know how to better compute and/or estimate the term \eqref{eq:sol-nonlinear}, it is interesting to consider two particular but very important cases:

\begin{remark}[Two important cases of $F$]\label{rem:special_cases}
For an arbitrary source term $F$ of the form \eqref{eq:source-nonlinear}, the solution $U$ in \eqref{eq:sol-nonlinear} may be not easy to work with, principally because one has arbitrary multiplications over the random sets $K_t$ and $\overline K_t \setminus K_t$. In order to gain some insights, we shall assume first that $f_1$ is harmonic, such that $\widetilde F= F$, and we present two different cases of \eqref{eq:sol-nonlinear} with natural simplifications:

\medskip

\begin{enumerate}
\item If $F(t,x,u) = c(t,x) u(t,x)$, a particle of the branching mechanism can only branch into one offspring particle. This implies that cardinal of $K_t$ is just $|K_t|=1$. Then, \eqref{eq:sol-nonlinear} takes the simplified form 
\[
U(t,x) = \E\left[\frac{\Delta T_N^t}{\overline \rho(\Delta T_N^t)}f(X_{T_N^t}^N)\prod_{n=0}^{N-1} \frac{\Delta T_n^t}{\rho(\Delta T_n^t)} c(t-T_n^t,X_{T_n^t}^n)  \right],
\]
where the indexes $k$ can be taken as nonnegative integers $n$, instead of $\N$-valued vectors, and $N = \min\{n\in\N: T_n^t \geq t\} = \min\{n\in\N:T_n^t = t\}$ is a nonnegative integer-valued random-variable easier to estimate than in the general case.

\medskip

\item If $F(t,x,u)=c(t,x)u^2(t,x)$, then a particle can only branch into two offspring particles. Let $N$ be the random variable that counts the number of particles on $K_t$. It is not difficult to see that the number of particles on $\overline K_t \setminus K_t$ is exactly $N-1$. Then \eqref{eq:sol-nonlinear} takes the simplified form
\[
U(t,x) = \E\left[\prod_{i=1}^{N}{\bf 1}_{\{k_i \in K_t\}}\frac{\Delta T_{k_i}^t}{\overline \rho\left(\Delta T_{k_t}^t\right)}f\left(X_{T_{k_i}^t}^{k_i}\right)\prod_{j=1}^{N-1}{\bf 1}_{\{\overline k_j \notin K_t \}} \frac{\Delta T_{\overline k_j}^t}{\rho\left(\Delta T_{\overline k_j}^t\right)} c\left(t-T_{\overline k_j}^t,X_{T_{\overline k_j}^t}^{\overline k_j}\right)  \right].
\]
\end{enumerate}
\end{remark}
Now that we have a suitable understanding of the branching mechanism, we are ready to state our main results.

\section{Main results} 

Consider the full framework introduced in Sections \ref{sec:2} and \ref{Sec:2b}. Before stating the main results of this paper, we need to state some hypotheses on the approximation of the functions $f$ and $c$ via deep neural networks. 

\begin{assumptions}\label{ass2} Fix a dimension $d \in \{1,2,3\}$ and let $f,F$ be bounded continuous functions such that Theorem \ref{prop:sol-nonlinear} is satisfied. Fix $F$ and $p$ be
\[
F(t,x,u): =c(t,x) u^p, \quad p=0,1,2,3,\ldots
\]
with $c$ nontrivial. Let $B,T,\beta\geq 0$, $p,r,q \in \N$, $q\geq 2$, and $\alpha \geq 2$. For every $\varepsilon \in (0,1]$ and $t \in [0,T]$ let $\Phi_{f,d,t,\varepsilon}, \Phi_{c,d,t,\varepsilon} \in \emph{\textbf{N}}$ be deep neural networks (see Section \ref{sec:3} for definitions) satisfying the following conditions:
\smallskip
\begin{enumerate}
\item[(a)] Continuous realizations $\mathcal R$ of DNNs for the initial data and model coefficients. For all $x \in \mathbb R^d$ and $t \in [0,T]$,
\[
\mathcal R(\Phi_{f,d,t,\varepsilon}) \in C(\R^d,\R), \qquad \mathcal R(\Phi_{c,d,t,\varepsilon}) \in C\left([0,T] \times \R^d,\R\right).
\]
\item[(b)] Local DNN approximation: for all $|x|\leq 2T$ and $s \in [0,T]$,
	\be\label{eq:approx_f1}
	|f(x)-\mathcal{R}(\Phi_{f,d,t,\varepsilon})(x)| \leq \varepsilon,
	\ee
	and,
	\be\label{eq:approx_c1}
	|c(s,x)-\mathcal{R}(\Phi_{c,d,t,\varepsilon})(s,x)| \leq \varepsilon.
	\ee
\item[(c)] A priori estimates on the number of parameters $\mathcal{P}$ of the DNN approximation of the initial data (Definition \ref{def:PDHW}):
	\be\label{eq:cota_param_phiu_nl}
	\mathcal{P}(\Phi_{f,d,t,\varepsilon}) \leq Bd^p \varepsilon^{-\alpha}, \hspace{1cm} \mathcal{P}(\Phi_{c,d,t,\varepsilon}) \leq Bd^p \varepsilon^{-\alpha}.
	\ee
\item[(d)] Bounded hidden layers. One has the following priori estimates on the number of hidden layers $\mathcal{H}$ of the DNN approximation of the initial data (Definition \ref{def:PDHW}):
	\be\label{eq:dim_dnn_encontradas_nl}
	\mathcal{H}(\Phi_{f,d,t,\varepsilon}) \leq Bd^p \varepsilon^{-\beta}, \hspace{1cm} \mathcal{H}(\Phi_{c,d,t,\varepsilon}) \leq Bd^p \varepsilon^{-\beta}.
	\ee
\item[(e)] No blow-up in finite time.  There exists $\delta_0>0$, such that either $\|f\|_{L^\infty(\mathbb R^d)} +\|c\|_{L^\infty(0,T,\R^d)} < \delta_0$, or $0<T<\delta_0$.
\end{enumerate}
\end{assumptions}

\begin{remark}
By assuming that $f$ and $c$ are bounded continuous, and using item $(b)$ of previous assumptions, one has the following bounds for all $t \in [0,T]$:
\[\norm{\mathcal R(\Phi_{f,d,t,\varepsilon})}_{\infty} \leq \norm{f}_{\infty} + 1, \qquad \norm{\mathcal R(\Phi_{c,d,t,\varepsilon})}_{\infty}\leq \norm{c}_{\infty} + 1.
\]
Additionally, notice that we only require approximation near the so-called light cone of the solution. 
\end{remark}

\begin{remark}
Assumption $(e)$ above is natural in view that $T>0$ will be a fixed but arbitrary time and NLW with quadratic or higher nonlinearities may have singularity formation if the data is large enough. Additionally, the power-like character ensures real-analyticity of the nonlinearity, avoiding some cases where even small data solutions blow-up in finite time.
\end{remark}

\begin{theorem}[Universal approximation]\label{MT1}
Let $U=U(t,x)$, $x\in \mathbb R^d$, $t\in [0,T]$ be given by \eqref{eq:sol-nonlinear}. Under Assumptions \ref{ass2}, the following is satisfied. For all $d =1,2,3$, $t \in [0,T]$, $\varepsilon \in (0,1]$ there exists a DNN $\Phi_{d,t,\varepsilon} \in \textbf{N}$, and constants $\widetilde{B}:=\widetilde{B}(B,T,\alpha,\beta)>0$, $\eta:=\eta(p,\alpha,\beta)>0$  and $\gamma:=\gamma(p,\alpha,\beta)>0$, such that the following are satisfied:
	
\begin{enumerate}
\item	 Continuous dependence and arbitrary approximation inside the light-cone: one has $\mathcal{R}(\Phi_{d,t,\varepsilon}) \in C(\mathbb R^d, \R)$ and
	\be\label{eq:main_eps_nl}
	\| U(t) - \mathcal{R}(\Phi_{d,t,\varepsilon})\|_{L^\infty(B(0,t))} 
	\leq \varepsilon.
	\ee
\item Moreover, one has the parameter estimates:
	\be\label{eq:main_param_nl}
	\mathcal{P}(\Phi_{d,t,\varepsilon}) \leq \widetilde{B}d^{\eta} \varepsilon^{-\eta}, \quad \mathcal{H}(\Phi_{d,t,\varepsilon}) \leq \widetilde{B} d^{\gamma} \varepsilon^{-\gamma}.
	\ee
\end{enumerate}
In particular, the solution $U$ does not blow up in finite time in the region $|x|<t$, $0\leq t\leq T$.
\end{theorem}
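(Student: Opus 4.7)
The plan is to combine a Monte Carlo sampling of the branching representation \eqref{eq:sol-nonlinear} with a neural-network emulation of each sample realization, assembled from the given DNN approximators $\Phi_{f,d,t,\varepsilon'}$ and $\Phi_{c,d,t,\varepsilon'}$. Writing $U(t,x)=\E[\Psi(t,x)]$ for $\Psi$ equal to the integrand in \eqref{eq:sol-nonlinear}, I would first approximate $U$ by an empirical mean $\widehat U_M(t,x)=M^{-1}\sum_{i=1}^M\Psi^{(i)}(t,x)$ of $M$ i.i.d.\ copies, then replace each instance of $f$ and $c$ inside each realized $\Psi^{(i)}$ by its DNN surrogate, and finally bundle everything into a single DNN by parallelization, composition and scalar multiplication. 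The target error $\varepsilon$ is split as $\varepsilon/2$ for the Monte Carlo step and $\varepsilon/2$ for the DNN emulation of the selected empirical mean.

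First I would verify that, under Assumption \ref{ass2}(e), the random variable $\Psi(t,x)$ has a second moment bounded uniformly in $x\in B(0,t)$ and $t\in[0,T]$: the dominating series $H_1$ of Assumption \ref{OnF} converges on the relevant window, which by the Henry-Labord\`ere–Touzi analysis translates into $\sup_{x\in B(0,t)}\operatorname{Var}(\Psi(t,x))\leq K$ for a constant $K=K(B,T,\norm{f}_\infty,\norm{c}_\infty)$ and into subcriticality of the underlying Galton--Watson tree, so that $|\overline K_t|$ has exponential tails. Taking $M:=\lceil 16K\varepsilon^{-2}\rceil$, combining Chebyshev's inequality with an $\varepsilon$-net covering $B(0,t)$ (each $\Psi^{(i)}(t,\cdot)$ being Lipschitz in $x$ with constant depending polynomially on $\norm{c}_\infty$ and $|\overline K_t^{(i)}|$) together with a union bound yields, by a probabilistic selection, a realization $\omega^*$ of the $M$ trees for which $\|\widehat U_M(t,\cdot)-U(t,\cdot)\|_{L^\infty(B(0,t))}\leq\varepsilon/2$, together with explicit deterministic bounds $N^*=O(\log M)$ on $\max_{1\leq i\leq M}|\overline K_t^{(i)}|$ and $H^*=O(\log M)$ on the maximum tree depth.

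With $\omega^*$ frozen, every $\Psi^{(i)}$ is a deterministic expression in $x$: a product, over the nodes of a finite tree of size $\leq N^*$ and depth $\leq H^*$, of factors of the form $\lambda_k\,f(X^{k,(i)}_{T_k^t})$ at leaves and $\mu_k\,c(s_k^{(i)},X^{k,(i)}_{T_k^t})$ at internal nodes, where the $X^{k,(i)}_{T_k^t}=x+z^{k,(i)}$ are affine in $x$ and satisfy $|z^{k,(i)}|\leq t$ by Remark \ref{rem:Z} applied inductively to \eqref{eq:XTk}. Hence, for $x\in B(0,t)$, all evaluation points lie in the ball $\{|y|\leq 2T\}$, so the local approximation bounds \eqref{eq:approx_f1}--\eqref{eq:approx_c1} are applicable. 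Choosing a tolerance $\varepsilon''=\varepsilon\cdot C^{-1}M^{-1}(N^*)^{-N^*}$, with $C$ absorbing $\norm{f}_\infty,\norm{c}_\infty$ and the node factors, I replace $f,c$ by $\mathcal R(\Phi_{f,d,t,\varepsilon''}),\mathcal R(\Phi_{c,d,t,\varepsilon''})$ and realize the products via the standard DNN emulation of scalar multiplication (accuracy $\delta$ in $O(\log\delta^{-1})$ depth and width; see e.g.\ \cite{Grohs,Gonnon}), producing subnetworks $\widehat\Phi^{(i)}$ realizing $\Psi^{(i)}$ up to error $\varepsilon/(4M)$.

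The final network $\Phi_{d,t,\varepsilon}$ parallelizes the $\widehat\Phi^{(i)}$ and post-composes with a single linear layer implementing $M^{-1}\sum_i(\cdot)$. Sub-additive/sub-multiplicative rules for parallelization and composition, combined with the budgets \eqref{eq:cota_param_phiu_nl}--\eqref{eq:dim_dnn_encontradas_nl} applied to $\Phi_{f,d,t,\varepsilon''}$ and $\Phi_{c,d,t,\varepsilon''}$, yield bounds of the form $\mathcal P(\Phi_{d,t,\varepsilon})\leq M\cdot N^*\cdot C_1\,d^p(\varepsilon'')^{-\alpha}$ and $\mathcal H(\Phi_{d,t,\varepsilon})\leq H^*\cdot C_2\,d^p(\varepsilon'')^{-\beta}$; inserting $M\sim\varepsilon^{-2}$ and the chosen $\varepsilon''$ and absorbing polylogarithmic factors into a slightly larger polynomial exponent produces \eqref{eq:main_param_nl} with $\eta=\eta(p,\alpha,\beta)$ and $\gamma=\gamma(p,\alpha,\beta)$. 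The hard part, and the step where the most care is needed, is the interplay between Monte Carlo variance and tree-size control: Assumption \ref{ass2}(e) is indispensable to prevent Galton--Watson supercriticality, keeping both $\operatorname{Var}(\Psi)$ and $|\overline K_t|$ tame and securing the concentration bound on $\max_i|\overline K_t^{(i)}|$ that turns the a.s.\ finite random tree sizes into the deterministic polynomial bounds feeding \eqref{eq:main_param_nl}; the absence of finite-time blow-up of $U$ on the light cone then follows a posteriori from \eqref{eq:main_eps_nl} and the uniform bounds on $\mathcal R(\Phi_{f,d,t,\varepsilon''}),\mathcal R(\Phi_{c,d,t,\varepsilon''})$.
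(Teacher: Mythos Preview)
Your plan follows the same overall architecture as the paper (Monte Carlo on the branching representation, DNN substitution for $f,c$, product emulation, parallelization), but two concrete choices diverge and one of them, as written, breaks the polynomial parameter bound. The paper fixes $\rho(s)=\lambda e^{-\lambda s}$ up front and carries out the error analysis \emph{in expectation before sampling}: it first replaces $f,c$ inside $\E[\cdot]$ (the function $v$), bounds $|U-v|$ by a series over the branching count $n$ whose terms are weighted by the explicitly computed quantities $\mathcal I_{n,p}(t)\,\PP(N_t^p=n)$ (recursions and closed forms in Lemmas~\ref{prob-Ntp}--\ref{lem:cotas-conv} and Corollary~\ref{cor:cotas-rec}), and uses the smallness hypothesis~(e) in the precise quantitative form $\overline B<T^{-1}\big(\lambda(2p+1)(2p+3)/(T(e^{\lambda T(p-1)}-1))\big)^{1/(2p)}$ so that these series sum. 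The second moment needed for the Monte Carlo step is handled by a parallel recursion for $\mathcal J_{n,p}$; your appeal to ``the Henry-Labord\`ere--Touzi analysis'' covers well-posedness of $U$ but not this variance bound, which is genuinely separate work. Tree sizes are controlled through the \emph{sum} $\sum_i N_i$, by inserting $|\E N-M^{-1}\sum_iN_i|^2$ into the selection functional, so that the chosen realization satisfies $\sum_iN_i\le M(1+\E N_t^p)$, which is exactly what feeds the final parameter count.

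Your route instead freezes a realization first and then pushes the product-error tolerance $\varepsilon''$ to absorb the worst tree. The gap is the choice $\varepsilon''\sim\varepsilon\,M^{-1}(N^*)^{-N^*}$: with $N^*=O(\log M)=O(\log\varepsilon^{-1})$ one has $(N^*)^{N^*}=\varepsilon^{-O(\log\log\varepsilon^{-1})}$, which is genuinely super-polynomial and cannot be ``absorbed into a slightly larger polynomial exponent''. The product of $k$ factors, each bounded by a constant $R$ and perturbed by $\varepsilon''$, incurs error of order $kR^{k-1}\varepsilon''$ (this is exactly Lemma~\ref{lemma:mult_DNNs} with the refinement \eqref{ck-2}), not $(k)^k\varepsilon''$; the correct tolerance is $\varepsilon''\sim\varepsilon M^{-1}(N^*)^{-1}R^{-N^*}$, which \emph{is} polynomial in $\varepsilon^{-1}$ since $R^{N^*}=R^{O(\log\varepsilon^{-1})}=\varepsilon^{-O(1)}$. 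With that correction your scheme can be made to work. A secondary issue: your $\varepsilon$-net step for the $L^\infty$ bound needs each $\Psi^{(i)}(t,\cdot)$ to be Lipschitz, hence $f$ and $c(s,\cdot)$ Lipschitz, which Assumptions~\ref{ass2} do not provide; the paper's ordering (substitute DNNs before Monte Carlo) partially sidesteps this because the resulting samples are ReLU networks in $x$.
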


It is interesting to notice that one has control inside the light cone $|x|<t$. This is a natural property satisfied by classical waves. Mathematically, it is unlikely to have full approximation with control in space unless some particular spatial weights are used. In our case, we are able to discard weights (see Theorem \ref{thrm:main_linear} of a proof using that technique) and obtain full control inside the light cone under the condition $T$ finite.    

\medskip

To prove Theorem \ref{MT1}, we shall follow the following program: in Section \ref{sec:3} we introduce and describe the Deep Neural Network results needed for the proof of the main results. We need new estimates for the multiplication of functions (Lemmas \ref{lemma:mult_DNNs} and \ref{lemma:DNN_mult_k}, and Corollary \ref{lemma:mult_dnn_final}). Although somehow standard, these results are strongly necessary due to the product representation in branching processes, and quantify in better terms the product of DNNs and their approximation by a new DNN. 

\medskip

Later, in Section \ref{sec:4}, we start the proof of Theorem \ref{MT1} by considering first the case $p=0$, that is, the linear case, where the simpler representation \eqref{eq:sol-linear} holds. Notice that this case differs from the nonlinear one and the representation is different. Theorem \ref{thrm:main_linear} is the main result in this case. 

\medskip

Later, in Section \ref{sec:5}, we consider the proof of Theorem \ref{MT1} in the case $p=1$, the linear perturbative case, that we believe is deeply necessary to better understand the proof in the general case $p\geq 2$. Finally, Section \ref{sec:6} contains the proof of Theorem \ref{MT1} in the truly nonlinear case, $p\geq 2$.

\medskip

This work leaves several more than interesting open questions. First of all, due to the extension of this paper, the numerical implementation of the proposed method will be done elsewhere. Several works performing numerical approximation of Montecarlo motivated DNN approximations are present in the literature, see e.g. \cite{Val23} for the case of the fractional Laplacian, an operator with less strong ``oscillatory behavior'' compared with waves. Additionally, considering more general nonlinearities requires better probabilistic counting techniques in the case of branching mechanisms, a process that we expect to obtain in future works.

\medskip

Even if the dimensions $d=1,2,3$ are widely regarded as the most prominent physical dimensions where one studies waves, the main drawback of this work is the fact that dimensions larger than 4 do require additional developments, in view that the fundamental solution of linear waves does not enjoy enough integrability properties as required in the probabilist setting. Currently we are exploring this case, extending the framework described in this paper. This step is required to fully comprehend the cost of  approximation in the case of dimensions $d$ large.  

%


\newpage

\appendix

\section*{Supplementary Material}

\section{Deep Neural Networks: A quick review and new results}\label{sec:3}

\subsection{Neural Network setting} In this section we introduce some elemental concepts for the construction of the neural networks and their principal components. There are several definitions on the architecture of neural networks in the literature, see e.g. \cite{Elb21, Gro23,Hutz20}.

\begin{definition}[Neural Network]\label{def:DNN}
Let $H \in \N$, let $k_0,\ldots,k_{H+1} \in \N$. 
\begin{enumerate}
\item A Neural Network $\Phi$ is a sequence of matrices and vectors (called weights and biases, respectively) given by
\begin{equation}\label{def_NN}
\Phi := ((W_i,b_i))_{i=1}^{H+1} \in \prod_{i=1}^{H+1} \left(\R^{k_i \times k_{i-1}} \times \R^{k_i}\right).
\end{equation}
\item We denote $H$ as the number of hidden layers (represented by $1,2,\ldots H$), and $k_0, k_1,k_2,\ldots,k_{H+1} \in \N$ as the deepness (vector size) of each layer. $\mathbb R^{k_0}$ is the input space, and $\mathbb R^{k_{H+1}}$ will be the output space. We will say that the NN is deep if $H$ is large enough, depending on the considered problem.

\item We define the space of all NNs ${\bf N}$ as
\[
{\bf N} := \bigcup_{H \in \N} \bigcup_{(k_0,\ldots,k_{H+1}) \in \N^{H+2}} \prod_{i=1}^{H+1} \left(\R^{k_i \times k_{i-1}} \times \R^{k_i}\right).
\]
\end{enumerate}
\end{definition}

We can naturally define a continuous function from any neural network $\Phi \in {\bf N}$. Next Definition is devoted to the characterization of such a continuous function. First, let $\sigma:\R \to \R$, $\sigma(z) = \max\{0,z\}$ be a ReLU activation function.

\begin{definition}[Realization of a NN] Define 
\[
\begin{matrix}
\mathcal R: &{\bf N} &\longrightarrow &\displaystyle \bigcup_{k,l \in \N} C(\R^{k},\R^{l}) \\
& \Phi & \longmapsto & \mathcal R (\Phi) \equiv A_{H+1} \circ \sigma \circ A_{H} \circ \sigma \circ \ldots \circ \sigma \circ A_1,
\end{matrix}
\]
where for $i=1,\ldots,H+1$, $A_i(x) := W_i x + b_i$ are linear affine transformations, with $\sigma$ acting component-wise. For $\Phi = ((W_i,b_i))_{i=1}^{H+1} \in {\bf N}$ we say that $\mathcal R(\Phi) \in C(\R^{k_0},\R^{k_{H+1}})$ is the realization of $\Phi$.
\end{definition}
Notice that we can construct the realization of $\Phi \in \N$ given any continuous function $\sigma : \R \to \R$ acting component-wise. Although, in this paper we will only focus on the ReLU activation function.  
\begin{definition}[DNN parameters]\label{def:PDHW}
For each $\Phi \in \N$ as in Definition \ref{def:DNN}, we denote
\begin{enumerate}
\item $\mathcal P(\Phi)$ be the number of non-zero entries of the weights and biases of $\Phi$.
\item $\mathcal D(\Phi) = (k_0,\ldots,k_{H+1})$ be the vector of dimensions of $\Phi$.
\item $\mathcal H(\Phi) = H$ be the number of hidden layers.
\item $\mathcal W(\Phi) = \max_{i=0,\ldots,H+1} k_i$ be the width of $\Phi$.
\end{enumerate}
\end{definition}

\begin{remark}
From Definition \ref{def:PDHW} and \eqref{def_NN}, one can easily see that for any $\Phi \in \N$,
\[
\mathcal P(\Phi) \leq (\mathcal H(\Phi) + 1) \mathcal W(\Phi) \left( \mathcal W(\Phi) + 1 \right).
\]
The RHS corresponds to have every possible entry in the NN with a nonzero value.
\end{remark}

\subsection{Review of DNN algebra} Basic operations between neural networks, such as the sum, the composition, among others, have been exhaustive studied in literature \cite{Elb21, Gro23,Hutz20}. In this section we present some Lemmas that we will use along this paper. First we define two operations between vectors.
\begin{definition}\label{def:Op_vec} Let $\displaystyle {\bf D} = \bigcup_{H \in \N} \N^{H+2}$ be the set of all possible space dimensions of a NN.
\begin{enumerate}
\item Let $H_1,H_2 \in \N$. We denote by $\odot : {\bf D} \times {\bf D} \to {\bf D}$ the function defined such that for $\alpha = (\alpha_{0},\ldots,\alpha_{H_1+1}) \in \N^{H_1+2}$ and $\beta = (\beta_0,\ldots,\beta_{H_2+1}) \in \N^{H_2+2}$ satisfies
\begin{equation}\label{suma_o}
\alpha \odot \beta = (\beta_0,\ldots,\beta_{H_2}, \beta_{H_2+1}+\alpha_{0},\alpha_1,\ldots,\alpha_{H_1+1}) \in \N^{H_1+H_2+3}.
\end{equation}
This function will naturally represent the concatenation (or composition) of NNs.
\item Let $H \in \N$. We denote by $\boxplus: {\bf D}\times {\bf D}\to {\bf D}$ the function defined such that for $\alpha = (\alpha_0,\ldots,\alpha_{H+1}) \in \N^{H+2}$ and $\beta=(\beta_0,\ldots,\beta_{H+1}) \in \N^{H+2}$ satisfies
\[
\alpha \boxplus \beta = (\alpha_0,\alpha_1+\beta_1,\ldots ,\alpha_H+\beta_H,\beta_{H+1}) \in \N^{H+2}.
\]
This function will naturally represent the sum of NNs, and it is an associative property.
\item Let $H \in \N$. Define $\mathfrak n_H$ as
\begin{equation}\label{NH}
\mathfrak n_H = (1,\underbrace{2,\ldots,2}_{H-\text{times }},1) \in \N^{H+2}.
\end{equation}
This  already introduced particular vector (see e.g. \cite{Hutz20}), represent the minimum cost of representing the identity via a NN of free parameter $H$ (Lemma \ref{lemma:DNN_id}).
\end{enumerate}
\end{definition}
The advantage of using ReLU DNNs is that the identity function on $\R^d$ can be \emph{exactly represented} as the realization of a neural network with ReLU activation function.
\begin{lemma}[Representation of the identity function on $\R$]\label{lemma:DNN_id}
Let $H \in \N$. Define ${\bf Id}_{\R} : \R \to \R$ be the function such that for any $x \in \R$, ${\bf Id}_{\R}(x) = x$. Then there exists $\Sigma_{H} \in {\bf N}$ such that
\[
\mathcal R(\Sigma_{H}) \equiv {\bf Id}_{\R} \in \mathcal R\big( \{\Phi \in {\bf N}: \mathcal D(\Phi) = \mathfrak n_{H}\} \big).
\]
Moreover, the number of hidden layers, the number of nonzero components of the NN, and the width of the NN are, respectively,
\[
\mathcal H(\Sigma_H) = H, \hspace{1cm} \mathcal P(\Sigma_H) \leq 2(H+1) \hspace{.5cm} \hbox{and} \hspace{.5cm} \mathcal W(\Sigma_H) \leq 2. 
\]
\end{lemma}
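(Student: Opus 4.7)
The plan is to construct $\Sigma_H$ explicitly and verify the three parameter bounds directly. The core identity to exploit is the ReLU decomposition $x = \sigma(x) - \sigma(-x)$ for every $x \in \mathbb{R}$, together with the observation that $\sigma$ acts as the identity on nonnegative inputs.

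Concretely, I would take $\Sigma_H = ((W_i, b_i))_{i=1}^{H+1}$ with all biases $b_i$ set to zero, and with
\[
W_1 = \begin{pmatrix} 1 \\ -1 \end{pmatrix} \in \mathbb{R}^{2\times 1}, \qquad W_i = \begin{pmatrix} 1 & 0 \\ 0 & 1 \end{pmatrix} \in \mathbb{R}^{2\times 2} \ \text{for } i=2,\dots,H, \qquad W_{H+1} = (1, -1) \in \mathbb{R}^{1\times 2}.
\]
By construction $\mathcal D(\Sigma_H) = (1,2,\dots,2,1) = \mathfrak n_H$, which immediately yields $\mathcal H(\Sigma_H) = H$ and $\mathcal W(\Sigma_H) = 2$.

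Next I would verify the realization identity. Feeding $x \in \mathbb R$ into the first affine layer gives $(x, -x)^T$, and applying $\sigma$ componentwise produces $(\sigma(x), \sigma(-x))^T$, a vector with nonnegative entries. For each subsequent layer $i \in \{2,\dots,H\}$, the affine map $W_i$ is the identity on $\mathbb R^2$ and the ReLU activation leaves nonnegative entries unchanged, so the pair $(\sigma(x), \sigma(-x))^T$ propagates untouched to the last hidden layer. The final affine layer $W_{H+1}$ outputs $\sigma(x) - \sigma(-x) = x$, so $\mathcal R(\Sigma_H) \equiv \mathbf{Id}_{\mathbb R}$.

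Finally I would tally the nonzero entries: $W_1$ contributes $2$, each of the $H-1$ intermediate weight matrices contributes $2$ (the diagonal of $I_2$), $W_{H+1}$ contributes $2$, and all biases vanish, giving
\[
\mathcal P(\Sigma_H) = 2 + 2(H-1) + 2 = 2(H+1).
\]
There is no genuine obstacle here; the only point requiring a brief remark is why propagating the pair through the middle layers does not inflate the parameter or width count, and this follows because ReLU is the identity on the nonnegative cone, so choosing $W_i = I_2$ suffices.
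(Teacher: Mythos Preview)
Your construction is correct and is exactly the standard one from \cite{Hutz20}, which the paper simply cites without reproducing; there is nothing to compare since the paper gives no in-text proof of its own.
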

\begin{proof} See \cite{Hutz20}.
\end{proof}
An analogous result is also valid for the representation of the identity function in $\R^d$, $d\in \N$, by constructing a DNN $\Sigma_{H,d}$ in a similar way of the construction of $\Sigma_H$, with $\mathcal D(\Sigma_{H,d}) = d\mathfrak n_H$. We will not go into additional details. Other properties about neural networks establish that operations such as the sum and the composition of neural networks are well-defined from the operators in Definition \ref{def:Op_vec}. These results are summarized in the following Lemmas. The reader can also refer to \cite{Hutz20}.

\begin{lemma}[Composition of NNs]\label{lemma:DNN_comp}
Let $H_1,H_2 \in \N$, $\alpha \in \N^{H_1+2}$, $\beta \in \N^{H_2+2}$ with $\alpha_0 = \beta_{H_2+1}$. Let $\Phi_1, \Phi_2 \in {\bf N}$ satisfy $\mathcal D(\Phi_1) = \alpha$ and $\mathcal D(\Phi_2) = \beta$. There exists $\Psi \in {\bf N}$ such that
\[
\mathcal R(\Psi) \equiv \mathcal R(\Phi_1) \circ \mathcal R(\Phi_2) \in \mathcal R(\{\Phi \in {\bf N}: \mathcal D(\Phi) = \alpha \odot \beta\}).
\]
Moreover, one has (see Definition \ref{def:PDHW})
\[
\begin{aligned}
\mathcal H(\Psi) = &~{} H_1 + H_2 + 1,\\ 
\mathcal P(\Psi) \leq &~{} 2\mathcal P(\Phi_1) + 2\mathcal P(\Phi_2), \hspace{.3cm} \hbox{and}\\
\mathcal W(\Phi) \leq &~{} \max\left\{2\alpha_0,\mathcal W(\Phi_1),\mathcal W(\Phi_2)\right\}.
\end{aligned}
\]
\end{lemma}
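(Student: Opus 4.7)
The plan is to construct $\Psi$ by explicit concatenation of the weights and biases of $\Phi_1$ and $\Phi_2$. A direct composition encounters the following difficulty: the last affine map $A^{(2)}_{H_2+1}$ of $\Phi_2$ is not followed by a ReLU in $\mathcal R(\Phi_2)$, and the first affine map $A^{(1)}_1$ of $\Phi_1$ is applied before the first ReLU in $\mathcal R(\Phi_1)$, so merging $A^{(1)}_1 \circ A^{(2)}_{H_2+1}$ into a single affine map would produce a network with only $H_1 + H_2$ hidden layers. However, the operation $\odot$ from Definition \ref{def:Op_vec} prescribes a middle dimension $\beta_{H_2+1} + \alpha_0 = 2\alpha_0$ and thus $H_1 + H_2 + 1$ hidden layers. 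The key idea is therefore to insert a ReLU-identity bridge between $\Phi_2$ and $\Phi_1$ based on the componentwise identity $y = \sigma(y) - \sigma(-y)$, valid for every $y \in \R$; this is exactly what demands the doubled middle width.

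Writing $\Phi_1 = ((W^{(1)}_i, b^{(1)}_i))_{i=1}^{H_1+1}$ and $\Phi_2 = ((W^{(2)}_i, b^{(2)}_i))_{i=1}^{H_2+1}$, I would define $\Psi = ((W^{\Psi}_i, b^{\Psi}_i))_{i=1}^{H_1+H_2+2}$ as follows: copy $(W^{\Psi}_i, b^{\Psi}_i) = (W^{(2)}_i, b^{(2)}_i)$ for $i = 1,\ldots,H_2$; at position $H_2+1$, take the vertically stacked pair
\[
W^{\Psi}_{H_2+1} = \begin{pmatrix} W^{(2)}_{H_2+1} \\ -W^{(2)}_{H_2+1} \end{pmatrix}, \qquad b^{\Psi}_{H_2+1} = \begin{pmatrix} b^{(2)}_{H_2+1} \\ -b^{(2)}_{H_2+1} \end{pmatrix};
\]
at position $H_2+2$, take the horizontally concatenated pair $W^{\Psi}_{H_2+2} = (W^{(1)}_1,\ -W^{(1)}_1)$ and $b^{\Psi}_{H_2+2} = b^{(1)}_1$; and copy $(W^{\Psi}_i, b^{\Psi}_i) = (W^{(1)}_{i-H_2-1}, b^{(1)}_{i-H_2-1})$ for $i = H_2+3,\ldots,H_1+H_2+2$.

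Verification then reduces to three short steps. First, unrolling $\mathcal R(\Psi)$ and using $\sigma(v) - \sigma(-v) = v$ componentwise on the output of the $(H_2+1)$-st layer shows $\mathcal R(\Psi) \equiv \mathcal R(\Phi_1)\circ\mathcal R(\Phi_2)$; inspection of matrix sizes gives $\mathcal D(\Psi) = \alpha \odot \beta$, which in particular yields $\mathcal H(\Psi) = H_1 + H_2 + 1$. Second, each nonzero entry of $\Phi_2$ is used at most twice in $\Psi$ (only the last weight/bias block is doubled), and each nonzero entry of $\Phi_1$ is used at most twice (only the first weight block is horizontally doubled), so $\mathcal P(\Psi) \leq 2\mathcal P(\Phi_1) + 2\mathcal P(\Phi_2)$. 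Third, the width bound $\mathcal W(\Psi) \leq \max\{2\alpha_0, \mathcal W(\Phi_1), \mathcal W(\Phi_2)\}$ is immediate from $\mathcal D(\Psi) = \alpha \odot \beta$.

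The main obstacle is careful index bookkeeping in the concatenation, namely tracking which parameters get doubled and ensuring all dimensions compose correctly. Edge cases such as $H_1 = 0$ or $H_2 = 0$ (a single affine map) have to be inspected separately but are handled by the same construction. Since the whole argument is an explicit construction relying only on the componentwise identity $\sigma(y) - \sigma(-y) = y$, no approximation or limiting step is required.
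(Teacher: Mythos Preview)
Your construction is correct and is precisely the standard ReLU-bridge argument: doubling the interface via $y=\sigma(y)-\sigma(-y)$ yields the middle width $2\alpha_0$ prescribed by $\odot$, the dimension vector $\alpha\odot\beta$ is recovered, and the parameter and width bounds follow by direct inspection. The paper itself does not supply a proof of this lemma; it is stated as a known fact with a reference to \cite{Hutz20}, and your proposal is exactly the construction given there, so there is nothing to contrast.
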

Notice that the result above ensures that $\mathcal R(\Phi_1) \circ \mathcal R(\Phi_2)$ is indeed the exact realization of a NN. This is possible thanks to the particular sizes required as hypotheses in Lemma \ref{lemma:DNN_comp}. Now we describe a result for sums of NNs.

\begin{lemma}[Sum of NNs with the same length]\label{lemma:DNN_suma}
Let $M,H,p,q \in \N$, $h_i \in \R$, $\beta_i \in \N^{H+2}$, $\Phi_i \in {\bf N}$ satisfy $\mathcal D(\Phi_i) = \beta_i$, $i=1,\ldots,M$. There exists $\Psi \in {\bf N}$ such that
\[
\mathcal R(\Psi) \equiv \sum_{i=1}^{M} h_i \mathcal R(\Phi_i) \in \mathcal R\left(\left\{\Phi \in {\bf N}: \mathcal D(\Phi) = \underset{i=1}{\overset{M}{\boxplus}}\beta_i\right\}\right).
\] 
Moreover, recalling Definition \ref{def:PDHW} one has
\[
\mathcal H(\Psi) =H \hspace{1cm}\mathcal P(\Psi) \leq \sum_{i=1}^{M} \mathcal P(\Phi_i), \hspace{.5cm} \hbox{and} \hspace{.5cm}\mathcal W(\Psi) \leq \sum_{i=1}^{M}\mathcal W(\Phi_i).
\]
\end{lemma}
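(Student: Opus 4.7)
The plan is to construct $\Psi$ explicitly by parallelizing the $M$ networks $\Phi_i$ and then taking a weighted linear combination at the output. Writing $\Phi_i = ((W_{j}^{(i)}, b_{j}^{(i)}))_{j=1}^{H+1}$ with $\mathcal{D}(\Phi_i) = \beta_i$, I first note that the operation $\boxplus$ is only meaningful when all $\beta_i$ share the first and last coordinates, which is precisely what makes $\sum_i h_i \mathcal{R}(\Phi_i)$ well-defined as a map between a common input and a common output space.

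I would define the layers of $\Psi$ as follows. For the first layer, since all streams share the same input, I stack the $W_1^{(i)}$ vertically:
\[
\widetilde{W}_1 := \begin{pmatrix} W_1^{(1)} \\ \vdots \\ W_1^{(M)} \end{pmatrix}, \qquad \widetilde{b}_1 := \begin{pmatrix} b_1^{(1)} \\ \vdots \\ b_1^{(M)} \end{pmatrix}.
\]
For intermediate layers $j = 2, \ldots, H$, I use block-diagonal weights,
\[
\widetilde{W}_j := \operatorname{diag}\bigl(W_j^{(1)}, \ldots, W_j^{(M)}\bigr), \qquad \widetilde{b}_j := \bigl((b_j^{(1)})^T, \ldots, (b_j^{(M)})^T\bigr)^T,
\]
so that the $M$ computations evolve in independent blocks; the componentwise action of ReLU preserves this block structure. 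For the final layer I combine the streams linearly with the scalars $h_i$:
\[
\widetilde{W}_{H+1} := \bigl(h_1 W_{H+1}^{(1)} \;\bigl|\; \cdots \;\bigl|\; h_M W_{H+1}^{(M)}\bigr), \qquad \widetilde{b}_{H+1} := \sum_{i=1}^M h_i b_{H+1}^{(i)}.
\]

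A straightforward induction on the layer index shows that the post-activation of $\Psi$ at every hidden layer $j \leq H$ is precisely the vertical concatenation of the post-activations of the $\Phi_i$, so the final affine layer outputs exactly $\sum_i h_i \mathcal{R}(\Phi_i)$. The dimension vector of $\Psi$ is $(\beta_{1,0}, \sum_i \beta_{i,1}, \ldots, \sum_i \beta_{i,H}, \beta_{1,H+1})$, which coincides with $\boxplus_{i=1}^M \beta_i$ by Definition \ref{def:Op_vec}; hence $\mathcal{H}(\Psi) = H$ and $\mathcal{W}(\Psi) \leq \sum_i \mathcal{W}(\Phi_i)$.

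For the parameter count, the vertical stack in the first layer contributes exactly the total number of nonzero entries of the $(W_1^{(i)}, b_1^{(i)})$; each block-diagonal middle layer contributes the total number of nonzero entries of $(W_j^{(i)}, b_j^{(i)})$ since all off-diagonal blocks are identically zero; and the final layer contributes at most the total number of nonzero entries of $(W_{H+1}^{(i)}, b_{H+1}^{(i)})$, because scaling by $h_i$ cannot create new nonzeros (and may only remove some if $h_i = 0$). Summing across layers yields $\mathcal{P}(\Psi) \leq \sum_i \mathcal{P}(\Phi_i)$. The construction is algebraic and essentially tautological; there is no genuine obstacle, only the careful bookkeeping that distinguishes the first layer (shared input, vertical stacking) and last layer (scalar-weighted horizontal concatenation) from the middle ones (independent evolution via block-diagonalization).
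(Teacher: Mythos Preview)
Your construction is correct and is exactly the standard block-diagonal parallelization used in the literature. Note that the paper itself does not supply a proof of this lemma; it states the result and refers the reader to \cite{Hutz20}, so there is no paper-proof to compare against beyond confirming that your approach is the expected one.
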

Now we present a result of NNs for affine transformations. For a detailed proof, the reader can consult, e.g. \cite{Hutz20}, Lemma 3.7. 	
\begin{lemma}[NNs for affine transformations]\label{lemma:DNN_affine}
Let $d,m \in \N$, $\lambda \in \R$, $b \in \R^d$, $a \in \R^m$, and $\Psi \in {\bf N}$ satisfying $\mathcal R(\Psi) \in C(\R^d,\R^m)$. Then it holds that
\[
\lambda (\mathcal R(\Psi)(\cdot + b) +a) \in \mathcal R(\{\Phi \in {\bf N}: \mathcal D(\Phi) = \mathcal D(\Psi)\}).
\]
\end{lemma}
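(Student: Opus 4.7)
The plan is to construct the desired network by modifying only the first and last affine maps of $\Psi$, while keeping every intermediate layer (and in particular the full dimension vector $\mathcal D(\Psi)$) untouched. The key observation is that the input variable only passes through the first affine map $A_1$ before the first ReLU is applied, and the output only emerges from the last affine map $A_{H+1}$ after the last ReLU. Consequently, an affine pre-composition on the input and an affine post-composition on the output can be absorbed into $A_1$ and $A_{H+1}$ respectively, without changing any matrix dimensions.

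More concretely, write $\Psi = ((W_i, b_i))_{i=1}^{H+1}$ with realization $\mathcal R(\Psi) = A_{H+1} \circ \sigma \circ A_H \circ \cdots \circ \sigma \circ A_1$, where $A_i(y) = W_i y + b_i$. For the input shift, I would observe
\[
A_1(x+b) = W_1(x+b) + b_1 = W_1 x + (W_1 b + b_1),
\]
so the translation $x \mapsto x+b$ is realized by replacing $b_1$ with $\tilde b_1 := b_1 + W_1 b$, keeping $\tilde W_1 := W_1$. For the output transformation,
\[
\lambda\bigl(A_{H+1}(z) + a\bigr) = (\lambda W_{H+1}) z + (\lambda b_{H+1} + \lambda a),
\]
so post-composition by $\lambda(\cdot + a)$ is realized by setting $\tilde W_{H+1} := \lambda W_{H+1}$ and $\tilde b_{H+1} := \lambda b_{H+1} + \lambda a$.

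Then I would define $\Phi := ((\tilde W_i, \tilde b_i))_{i=1}^{H+1}$, where $(\tilde W_i, \tilde b_i) := (W_i, b_i)$ for $2 \leq i \leq H$. By construction every $\tilde W_i$ has the same shape as $W_i$ and every $\tilde b_i$ has the same length as $b_i$, so $\mathcal D(\Phi) = \mathcal D(\Psi)$. A direct computation chaining the modified $A_1$, all the intermediate layers, and the modified $A_{H+1}$ gives
\[
\mathcal R(\Phi)(x) = \lambda\bigl(\mathcal R(\Psi)(x+b) + a\bigr) \qquad \text{for all } x \in \mathbb{R}^d,
\]
which is the required identity.

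There is essentially no obstacle: the entire argument is bookkeeping on the weights and biases of the first and last layers, and there is no interaction with the ReLU nonlinearity since both modifications act outside the $\sigma$-layers. The only thing to verify carefully is that the dimension vector coincides exactly with $\mathcal D(\Psi)$, which is immediate because we only alter the numerical entries of $(W_1, b_1)$ and $(W_{H+1}, b_{H+1})$, not their dimensions.
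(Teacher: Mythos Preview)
Your proposal is correct and is the standard argument for this result. The paper itself does not give a proof but simply refers to \cite{Hutz20}, Lemma~3.7; your construction (absorbing the input shift into $(W_1,b_1)$ and the output affine map into $(W_{H+1},b_{H+1})$) is precisely the approach used there.
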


This result, even if of auxiliar character, it is really useful in the proofs of Theorems \ref{thrm:main_linear} (approximation of linear waves) and \ref{MT1} (approximation of perturbed linear and nonlinear waves).

\medskip

The following result has been introduced in order to be able to sum DNNs with different lengths. In principle, it states that every DNN can extend its hidden layers without changing its realization.

\begin{lemma}[Extension of layers of a given NN]\label{lem:DNN_extension} Let $H_1, H_2 \in \N$ be nontrivial integers such that $H_1 < H_2$. Let $\alpha \in \N^{H_1+2}$ with $\alpha_{H_1+1}=1$, and let $\Psi \in {\bf N}$ satisfy $\mathcal D(\Psi) = \alpha$. Then there exists $\overline \Psi \in {\bf N}$ such that
\[
\mathcal R(\overline \Psi) \equiv \mathcal R(\Psi) \in \mathcal R \Big( \Big\{\Phi \in {\bf N}: \mathcal D(\Phi) =  \mathfrak n_{H_2-H_1-1} \odot \alpha\Big\} \Big).
\]
Moreover, the parameters of $\overline \Psi$ are only slightly worsened:  
\[
\mathcal H(\overline \Psi) = H_2, \hspace{.6cm} \mathcal P(\overline \Psi) \leq 2\mathcal P(\Psi) + 4(H_2-H_1), \hspace{.3cm} \hbox{and} \hspace{.3cm} \mathcal W(\overline \Psi) = \max\{2,\mathcal W(\Psi)\}.
\]
\end{lemma}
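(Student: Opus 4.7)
The plan is to realize $\overline{\Psi}$ as the composition of $\Psi$ with the identity DNN $\Sigma_{H_2-H_1-1}$ from Lemma \ref{lemma:DNN_id}. Since by hypothesis $\alpha_{H_1+1}=1$, the output of $\mathcal{R}(\Psi)$ lives in $\mathbb{R}$, and $\Sigma_{H_2-H_1-1}$ is precisely a NN whose realization is the identity on $\mathbb{R}$ with $\mathcal{D}(\Sigma_{H_2-H_1-1}) = \mathfrak{n}_{H_2-H_1-1}$, so $\mathcal{R}(\Sigma_{H_2-H_1-1}) \circ \mathcal{R}(\Psi) \equiv \mathcal{R}(\Psi)$ as continuous functions.

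To turn this composition of realizations into the realization of a single NN $\overline{\Psi}$, I would apply Lemma \ref{lemma:DNN_comp} with $\Phi_1 := \Sigma_{H_2-H_1-1}$ (outer) and $\Phi_2 := \Psi$ (inner). The compatibility condition required by that lemma is precisely that the input dimension of $\Phi_1$ equals the output dimension of $\Phi_2$, which here reads $(\mathfrak{n}_{H_2-H_1-1})_0 = 1 = \alpha_{H_1+1}$, and this is satisfied by assumption. The resulting architecture is then $\mathcal{D}(\overline{\Psi}) = \mathfrak{n}_{H_2-H_1-1} \odot \alpha$, matching the claim.

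The bounds are then a direct bookkeeping from Lemmas \ref{lemma:DNN_id} and \ref{lemma:DNN_comp}. For the number of hidden layers, the composition rule gives $\mathcal{H}(\overline{\Psi}) = \mathcal{H}(\Sigma_{H_2-H_1-1}) + \mathcal{H}(\Psi) + 1 = (H_2-H_1-1)+H_1+1 = H_2$. For the parameters, $\mathcal{P}(\Sigma_{H_2-H_1-1}) \leq 2(H_2-H_1)$ by Lemma \ref{lemma:DNN_id}, and the composition bound yields $\mathcal{P}(\overline{\Psi}) \leq 2\mathcal{P}(\Psi) + 2\mathcal{P}(\Sigma_{H_2-H_1-1}) \leq 2\mathcal{P}(\Psi) + 4(H_2-H_1)$. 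For the width, $\mathcal{W}(\Sigma_{H_2-H_1-1}) \leq 2$ and $2\alpha_0 \leq \mathcal{W}(\Psi)$ (since $\alpha_0$ is an interior/input dimension of $\Psi$), so $\mathcal{W}(\overline{\Psi}) \leq \max\{2\alpha_0, \mathcal{W}(\Sigma_{H_2-H_1-1}), \mathcal{W}(\Psi)\} = \max\{2, \mathcal{W}(\Psi)\}$.

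No serious obstacle is anticipated: the construction is essentially the observation that prepending ReLU-based identity blocks to a ReLU network lengthens it without changing its realization, and the quantitative bounds follow from already established lemmas. The only mild care needed is to check the input/output compatibility of the composition, which is exactly encoded in the hypothesis $\alpha_{H_1+1}=1$; otherwise one would have to invoke the higher-dimensional identity DNN $\Sigma_{H,d}$ briefly mentioned after Lemma \ref{lemma:DNN_id}.
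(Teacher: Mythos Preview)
Your approach is exactly the paper's: the proof there simply states that the lemma is a consequence of Lemma~\ref{lemma:DNN_id} (identity DNN) and Lemma~\ref{lemma:DNN_comp} (composition), which is precisely your construction $\overline{\Psi} = \Sigma_{H_2-H_1-1} \circ \Psi$.

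One small slip in your width argument: in Lemma~\ref{lemma:DNN_comp} the quantity $\alpha_0$ denotes the input dimension of the \emph{outer} network $\Phi_1 = \Sigma_{H_2-H_1-1}$, which is $1$ (the first entry of $\mathfrak{n}_{H_2-H_1-1}$), so $2\alpha_0 = 2$ directly and no inequality $2\alpha_0 \leq \mathcal{W}(\Psi)$ is needed (or generally true). Your conclusion $\mathcal{W}(\overline{\Psi}) = \max\{2,\mathcal{W}(\Psi)\}$ is nonetheless correct.
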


\begin{remark}
From \eqref{NH} and \eqref{suma_o} one has 
\[
\begin{aligned}
\mathfrak n_{H_2-H_1-1} \odot \alpha = &~{}  (\alpha_0,\alpha_1,\ldots, \alpha_{H_1},\alpha_{H_1+1}+1, 2,2,\ldots, 2, 1) \\
= &~{} (\alpha_0,\alpha_1,\ldots, \alpha_{H_1},2, 2,2,\ldots, 2, 1),
\end{aligned}
\]
where, in the last vector, the number 2 appears $H_2-H_1$ times.
\end{remark}

\begin{proof}[Proof of Lemma \ref{lem:DNN_extension}]
Lemma \ref{lem:DNN_extension} is a consequence of the identity representation by DNN (Lemma \ref{lemma:DNN_id}) and the composition of DNNs (Lemma \ref{lemma:DNN_comp}).
\end{proof}

\begin{lemma}[Sum of DNNs with different length]\label{lemma:DNN_suma_distH}
	Let $d,H_f,H_g \in \N$ with $H_f < H_g$, let $\alpha \in \N^{H_f+2}$, $\beta \in \N^{H_g+2}$, $f,g \in C(\R^d,\R)$ satisfy that $f \in \mathcal R(\{\Phi \in \textbf{N}: \mathcal D(\Phi)=\alpha \})$, and $g \in \mathcal R(\{\Phi \in \textbf{N}: \mathcal D(\Phi)=\beta \})$. Therefore there exists $\Psi \in \N$ that satisfy $\mathcal R(\Psi) \in C(\R^d,\R)$, $\mathcal R(\Psi) \equiv f+g$ and
	\[
	\mathcal R(\Psi) \in \mathcal R \big(\left\{ \Phi \in \textbf{N}: \mathcal D(\Phi) = (\mathfrak n_{H_g-H_f-1} \odot \alpha) \boxplus \beta \in \N^{H_g+2} \right\}\big).
	\]
	Moreover, $\mathcal H(\Psi) = H_g$, $\mathcal W(\Psi) = \max\{2,\mathcal W(\Phi_f)\} + \mathcal W(\Phi_g)$ and
	\[
	\mathcal P(\Psi) \leq \mathcal P(\Phi_g) + 2\mathcal P(\Phi_f) + 4(H_g-H_f).
	\]
\end{lemma}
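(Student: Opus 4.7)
The plan is to reduce this statement to the two preceding lemmas by first padding the shorter network so that both have the same depth, and then invoking the sum-of-equal-length-DNNs lemma. Let $\Phi_f, \Phi_g \in \textbf{N}$ denote DNNs with $\mathcal R(\Phi_f) = f$, $\mathcal R(\Phi_g) = g$, $\mathcal D(\Phi_f) = \alpha$, $\mathcal D(\Phi_g) = \beta$. Since $f \in C(\R^d, \R)$ we necessarily have the output dimension $\alpha_{H_f+1} = 1$, which is exactly the hypothesis needed to apply Lemma \ref{lem:DNN_extension}.

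The first step is to apply Lemma \ref{lem:DNN_extension} to $\Phi_f$ with $H_1 = H_f$ and $H_2 = H_g$. This produces $\overline{\Phi_f} \in \textbf{N}$ such that
\[
\mathcal R(\overline{\Phi_f}) \equiv f, \qquad \mathcal D(\overline{\Phi_f}) = \mathfrak n_{H_g - H_f - 1} \odot \alpha, \qquad \mathcal H(\overline{\Phi_f}) = H_g,
\]
together with the estimates $\mathcal P(\overline{\Phi_f}) \leq 2\mathcal P(\Phi_f) + 4(H_g - H_f)$ and $\mathcal W(\overline{\Phi_f}) = \max\{2, \mathcal W(\Phi_f)\}$.

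The second step is to apply Lemma \ref{lemma:DNN_suma} with $M = 2$, weights $h_1 = h_2 = 1$, and networks $\overline{\Phi_f}, \Phi_g$, both of which now have exactly $H_g$ hidden layers. This yields $\Psi \in \textbf{N}$ with
\[
\mathcal R(\Psi) \equiv f + g, \qquad \mathcal D(\Psi) = (\mathfrak n_{H_g - H_f - 1} \odot \alpha) \boxplus \beta, \qquad \mathcal H(\Psi) = H_g,
\]
and satisfying $\mathcal P(\Psi) \leq \mathcal P(\overline{\Phi_f}) + \mathcal P(\Phi_g)$ and $\mathcal W(\Psi) \leq \mathcal W(\overline{\Phi_f}) + \mathcal W(\Phi_g)$. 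Substituting the bounds obtained in the first step produces the advertised estimates.

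Since the construction is a direct composition of two earlier lemmas, there is essentially no obstacle; the only point requiring a moment of attention is verifying the output-dimension hypothesis $\alpha_{H_f+1}=1$ needed for the extension lemma, which is automatic from $f \in C(\R^d, \R)$. All other claims reduce to transparent bookkeeping on $\mathcal H$, $\mathcal P$, and $\mathcal W$.
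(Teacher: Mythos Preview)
Your proposal is correct and follows exactly the same approach as the paper, which simply states that the lemma follows from Lemmas \ref{lemma:DNN_suma} and \ref{lem:DNN_extension}. Your write-up is in fact more detailed than the paper's, as you explicitly verify the output-dimension hypothesis $\alpha_{H_f+1}=1$ and track the bookkeeping on $\mathcal H$, $\mathcal P$, and $\mathcal W$.
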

\begin{proof}
Lemma \ref{lemma:DNN_suma_distH} follows from Lemmas \ref{lemma:DNN_suma} and \ref{lem:DNN_extension}.
\end{proof}

{\color{black} In addition to the sum and the composition, Neural Networks with the same length of hidden layers can also be parallelized, in the sense that a vector with each component a DNN is also a DNN.

\begin{lemma}[Parallelization of DNNs with the same length]\label{lemma:DNN_para}
Let $H,M \in \N$. For $i=1,\ldots,M$ let $\alpha_i \in \N^{H+2}$ and $\Phi_i \in {\bf N}$ be such that $\mathcal D(\Phi_i) = \alpha_i$. Therefore, there exists $\Psi \in {\bf N}$ such that for all $z = (z_1,\ldots,z_M) \in \R^{\sum_{i=1}^{M} \alpha_{i,0}}$, with $z_i \in \R^{\alpha_{i,0}}$ it satisfy
\[
\mathcal R(\Psi)(z) = (\mathcal R(\Phi_1)(z_1),\ldots,\mathcal R(\Phi_M)(z_M)), \quad 
\mathcal R(\Psi) \in \mathcal R\left(\left\{\Phi \in {\bf N}: \mathcal D(\Phi) = \sum_{i=1}^{M} \alpha_i \right\}\right).
\]
Moreover,
\[
\mathcal H(\Psi) = H, \hspace{1cm} \mathcal P(\Psi) = \sum_{i=1}^{M} \mathcal P(\Phi_i), \hspace{.5cm} \hbox{and} \hspace{.5cm} \mathcal W(\Psi) \leq \sum_{i=1}^{M} \mathcal W(\Phi_i).
\]
\end{lemma}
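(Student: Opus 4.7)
The approach I would take is a direct block-diagonal construction. Writing each $\Phi_i = ((W_i^{(j)}, b_i^{(j)}))_{j=1}^{H+1}$ with $W_i^{(j)} \in \R^{\alpha_{i,j} \times \alpha_{i,j-1}}$ and $b_i^{(j)} \in \R^{\alpha_{i,j}}$, I would define $\Psi = ((\widetilde W^{(j)}, \widetilde b^{(j)}))_{j=1}^{H+1}$ by setting $\widetilde W^{(j)}$ to be the block-diagonal matrix with diagonal blocks $W_1^{(j)}, \ldots, W_M^{(j)}$ (and all off-diagonal entries zero), and $\widetilde b^{(j)} = (b_1^{(j)}, \ldots, b_M^{(j)})$ as the concatenation of the biases. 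Then $\widetilde W^{(j)} \in \R^{(\sum_i \alpha_{i,j}) \times (\sum_i \alpha_{i,j-1})}$ and $\widetilde b^{(j)} \in \R^{\sum_i \alpha_{i,j}}$, so $\mathcal D(\Psi) = \sum_{i=1}^M \alpha_i$, as required.

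To verify the realization identity, I would argue by induction on the layer index $j$ that if the pre-activation at layer $j$ coming out of the input $z = (z_1, \ldots, z_M)$ is the concatenation of the pre-activations of each $\Phi_i$ applied to its own input $z_i$, then the same holds at layer $j+1$. The key point is that (i) applying the block-diagonal affine map $\widetilde W^{(j)} \cdot + \widetilde b^{(j)}$ to a concatenated vector produces the concatenation of the individual affine maps $W_i^{(j)} \cdot + b_i^{(j)}$, and (ii) the ReLU activation $\sigma$ acts componentwise and therefore commutes with concatenation. Applying this from the first layer to the last output $A_{H+1}$ yields the desired identity $\mathcal R(\Psi)(z) = (\mathcal R(\Phi_1)(z_1), \ldots, \mathcal R(\Phi_M)(z_M))$.

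For the parameter count: since $\widetilde W^{(j)}$ is strictly block-diagonal with blocks $W_i^{(j)}$, its nonzero entries are exactly the union of the nonzero entries of the $W_i^{(j)}$'s; likewise for the biases. Summing over $j = 1, \ldots, H+1$ gives $\mathcal P(\Psi) = \sum_{i=1}^M \mathcal P(\Phi_i)$. For the width, note that $\mathcal W(\Psi) = \max_{j=0,\ldots,H+1} \sum_{i=1}^M \alpha_{i,j} \leq \sum_{i=1}^M \max_{j=0,\ldots,H+1} \alpha_{i,j} = \sum_{i=1}^M \mathcal W(\Phi_i)$. The number of hidden layers is $H$ by construction, since all $\Phi_i$ share this count and $\Psi$ has the same depth.

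There is no serious obstacle here; the whole difficulty is really bookkeeping (making sure the block sizes line up and that the induction step is stated precisely). The only conceptual point worth emphasizing is that the construction succeeds precisely because all $\Phi_i$ have the same number of hidden layers $H$, so the block-diagonal alignment is available at every layer; when the depths differ one would first need to invoke Lemma \ref{lem:DNN_extension} to pad the shallower networks, which is exactly the pattern used in Lemma \ref{lemma:DNN_suma_distH}.
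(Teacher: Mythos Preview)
Your block-diagonal construction is correct and is exactly the standard argument for this result. The paper itself does not include a proof of this lemma; it is stated as a known fact in the spirit of the other DNN-algebra lemmas (Lemmas \ref{lemma:DNN_id}--\ref{lemma:DNN_suma}) which the authors attribute to \cite{Hutz20}. Your write-up matches that standard construction and there is nothing to add.
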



\begin{remark}
Let $\alpha_0 \in \N$. If $\alpha_{i,0}=\alpha_0$ for all $i=1,\ldots,M$, then one can construct $\Psi \in \N$ satisfying
\[
\begin{aligned}
& \mathcal R(\Psi) \equiv (\mathcal R(\Phi_1)(\cdot),\ldots,\mathcal R(\Phi_M)(\cdot))  \in \mathcal R\left(\left\{\Phi \in {\bf N}: \mathcal D(\Phi) = \left(\alpha_0,\sum_{i=1}^{M} \alpha_{i,1},\ldots,\sum_{i=1}^{M} \alpha_{i,H+1}\right)\right\}\right),
\end{aligned}
\]
and the same conditions on $\mathcal H(\Psi)$, $\mathcal P(\Psi)$ and $\mathcal W(\Psi)$.
\end{remark}

}

{\color{black}
We now verify that if we fix a component in the input layer of the realization of a DNN, the resulting function is still the realization of a DNN. To do so, we first prove the following Lemma
\begin{lemma}\label{lemma:DNN_ext_t}
	Let $t \in \R$, $t \geq 0$. Let $f_t \in C(\R^d,\R^{d+1})$ be the function that satisfies for all $x \in \R^d$, $f_t(x) = (t,x_1,\ldots,x_d)$. Therefore there exists $\Psi_t \in {\bf N}$ such that
	\[
	\mathcal R(\Psi_t) \equiv f_t \in \mathcal R(\{\Phi \in {\bf N}: \mathcal D(\Phi) = (d,2d+1,d+1)\}).
	\]
	Moreover
	\begin{equation}\label{calculo_simple}
	\mathcal H(\Psi_t) = 1, \hspace{1cm} \mathcal P(\Psi_t) = 2(2d+1) \hspace{.5cm} \hbox{and} \hspace{.5cm} \mathcal W(\Psi_t) = 2d+1.
	\end{equation}
\end{lemma}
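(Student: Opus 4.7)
\textbf{Proof proposal for Lemma \ref{lemma:DNN_ext_t}.}

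The plan is to directly exhibit a one-hidden-layer ReLU DNN realizing $f_t$. The realization of a NN with one hidden layer of width $2d+1$ takes the form $x\mapsto W_2\,\sigma(W_1 x+b_1)+b_2$ with $W_1\in\R^{(2d+1)\times d}$, $b_1\in\R^{2d+1}$, $W_2\in\R^{(d+1)\times(2d+1)}$, $b_2\in\R^{d+1}$. To handle the $d$ identity components I will use the standard ReLU identity trick $x_i=\sigma(x_i)-\sigma(-x_i)$, which consumes $2d$ hidden neurons. The remaining neuron is dedicated to producing the constant $t$: since $t\geq 0$, a neuron with zero input weights and bias $t$ has preactivation $t\geq 0$, so $\sigma(t)=t$, and this value is then routed unchanged to the first output coordinate.

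Concretely, I would set $b_1=(t,0,\ldots,0)^T\in\R^{2d+1}$, and let $W_1$ have all rows zero except that for $i=1,\ldots,d$ the $(2i)$-th row has a single $+1$ in column $i$ and the $(2i+1)$-th row has a single $-1$ in column $i$. Then $\sigma(W_1 x+b_1)$ equals $(\,t,\sigma(x_1),\sigma(-x_1),\ldots,\sigma(x_d),\sigma(-x_d)\,)^T$. For the output map I take $b_2=0\in\R^{d+1}$ and define $W_2$ by: row $1$ has a $+1$ in column $1$ and zeros elsewhere; for $i=1,\ldots,d$, row $i+1$ has a $+1$ in column $2i$, a $-1$ in column $2i+1$ and zeros elsewhere. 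A direct computation then gives $W_2\sigma(W_1 x+b_1)+b_2=(t,x_1,\ldots,x_d)=f_t(x)$ for every $x\in\R^d$, so $\Psi_t:=((W_1,b_1),(W_2,b_2))\in\mathbf N$ satisfies $\mathcal R(\Psi_t)\equiv f_t$ and $\mathcal D(\Psi_t)=(d,2d+1,d+1)$ as required.

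It remains to tally the parameters from Definition \ref{def:PDHW}. Clearly $\mathcal H(\Psi_t)=1$ and $\mathcal W(\Psi_t)=\max\{d,2d+1,d+1\}=2d+1$. Counting non-zero entries: $W_1$ contributes $2d$, $b_1$ contributes $1$ (the entry $t$), $W_2$ contributes $1+2d$ (one entry in its first row, two in each of the remaining $d$ rows), and $b_2$ contributes $0$, giving in total $2d+1+2d+1=2(2d+1)$, which is \eqref{calculo_simple}.

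There is essentially no serious obstacle here; the argument is a self-contained explicit construction. The only point requiring care is the sign convention for the constant neuron: the hypothesis $t\geq 0$ is exactly what ensures $\sigma(t)=t$ and therefore allows the constant to be produced without introducing any auxiliary neurons or extra scaling (a negative $t$ would force a different routing and spoil the tight parameter count).
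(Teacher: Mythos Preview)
Your proof is correct and follows essentially the same approach as the paper: an explicit one-hidden-layer construction that uses one constant neuron with bias $t$ (relying on $t\geq 0$ so that $\sigma(t)=t$) together with the ReLU identity trick $x_i=\sigma(x_i)-\sigma(-x_i)$ for the remaining $2d$ neurons. The only cosmetic difference is that the paper groups the $\pm$ rows as $\begin{pmatrix}\vec 0^{\,T}\\ I_d\\ -I_d\end{pmatrix}$ whereas you interleave them, which of course does not affect the realization or the parameter count.
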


\begin{proof}
Let $t \in \R$. Define $\Psi_t \in {\bf N}$ as
\[
\begin{aligned}
\Psi_t &= \left(\left(\begin{pmatrix}\vec{0}^T \\ \mathbb I_{\R^d} \\ -\mathbb I_{\R^d} \end{pmatrix},\begin{pmatrix}t \\ 0 \\ \vdots \\ 0\end{pmatrix}\right),\left(\begin{pmatrix}1 & 0 & \cdots & 0 \\ 0 & & & \\ \vdots & \mathbb I_{\R^d} & & -\mathbb I_{\R^d} \\ 0 & & &\end{pmatrix},\vec{0}\right)\right) \\
&\hspace{5cm}\in \left( \R^{(2d+1) \times d} \times \R^{2d+1} \right) \times \left( \R^{(d+1)\times (2d+1)} \times \R^{d+1} \right),
\end{aligned}
\]
where $\mathbb I_{\R^d}$ is the identity matrix. It is not difficult to see that for all $x \in \R^d$,
\[
\mathcal R(\Psi_t)(x) = f_t(x).
\]
Moreover, one easily has $\mathcal H(\Psi_t) = 1$, $\mathcal D(\Psi_t) = (d,2d+1,d+1)$, $\mathcal P(\Psi_t) = 2(2d+1)$ and $\mathcal W(\Psi_t) = 2d+1$, proving \eqref{calculo_simple}.
\end{proof}

\begin{cor}\label{cor:DNN_fixed_comp}
	Let $f : \R^{d+1} \to \R$ be a function such that $f = \mathcal R(\Phi_f) \in C(\R^{d+1},\R)$ for some $\Phi_f \in {\bf N}$. Therefore for all $t \in \R$, $f(t,\cdot) \in C(\R^d,\R)$ and 
	\[
	f(t,\cdot) \in \mathcal R \Big( \Big\{\Phi \in {\bf N}: \mathcal D(\Phi) = \mathcal D(\Phi_f) \odot (d,2d+1,d+1)\Big\} \Big).
	\]
\end{cor}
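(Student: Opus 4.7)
The strategy is to realize the partial evaluation $x \mapsto f(t,x)$ as the composition of two DNN realizations: first embed $\R^d$ into $\R^{d+1}$ via $x \mapsto (t,x_1,\ldots,x_d)$, then apply $\mathcal R(\Phi_f)$. The first step is handled by Lemma \ref{lemma:DNN_ext_t}, and the composition step is handled by Lemma \ref{lemma:DNN_comp}.

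First I would fix $t \in \R$ and invoke Lemma \ref{lemma:DNN_ext_t} to obtain $\Psi_t \in {\bf N}$ satisfying $\mathcal R(\Psi_t) = f_t$ and $\mathcal D(\Psi_t) = (d, 2d+1, d+1)$. In particular, the output dimension of $\Psi_t$ equals the input dimension of $\Phi_f$, since $\mathcal R(\Phi_f) \in C(\R^{d+1},\R)$ means that $\mathcal D(\Phi_f)$ is of the form $\alpha = (d+1, \alpha_1, \ldots, \alpha_{H+1})$ with $\alpha_{H+1} = 1$. This matching of dimensions is exactly the hypothesis required in Lemma \ref{lemma:DNN_comp} (namely $\alpha_0 = \beta_{H_2+1} = d+1$).

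Next I would apply Lemma \ref{lemma:DNN_comp} to $\Phi_f$ and $\Psi_t$ to produce $\Psi \in {\bf N}$ with
\[
\mathcal R(\Psi) \equiv \mathcal R(\Phi_f) \circ \mathcal R(\Psi_t) \in \mathcal R\Big(\Big\{\Phi \in {\bf N} : \mathcal D(\Phi) = \mathcal D(\Phi_f) \odot \mathcal D(\Psi_t)\Big\}\Big).
\]
By construction of $f_t$ one has, for every $x \in \R^d$,
\[
\mathcal R(\Psi)(x) = \mathcal R(\Phi_f)(f_t(x)) = \mathcal R(\Phi_f)(t,x_1,\ldots,x_d) = f(t,x),
\]
so $f(t,\cdot) \equiv \mathcal R(\Psi) \in C(\R^d,\R)$, and the resulting dimension vector is precisely $\mathcal D(\Phi_f) \odot (d, 2d+1, d+1)$, as claimed.

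There is no real obstacle here: the entire argument consists of assembling two prior results. The only point that warrants a small check is that the dimensions match in the $\odot$ operation, which is guaranteed because $\Psi_t$ has been constructed in Lemma \ref{lemma:DNN_ext_t} precisely so that its output layer has size $d+1$, aligning with the input layer of $\Phi_f$.
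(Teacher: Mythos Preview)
Your proposal is correct and follows exactly the approach the paper indicates: the paper's proof is a one-line invocation of Lemmas \ref{lemma:DNN_comp} and \ref{lemma:DNN_ext_t}, and you have simply spelled out the dimension-matching check and the composition explicitly.
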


\begin{proof}
Corollary \ref{cor:DNN_fixed_comp} follows from Lemmas \ref{lemma:DNN_comp} and \ref{lemma:DNN_ext_t}.
\end{proof}

\subsection{Additional contributions in DNN algebra} The remaining Lemmas from this section are devoted to verify that the multiplication of neural networks approximates the multiplication of functions taking values on a ball centered at 0 with radius $r>0$, and even better, the multiplication of bounded neural networks can be approximated with a neural network on the ball $B(0,r)$.

\begin{lemma}\label{lemma:mult_DNNs} Let $r >0$ and $k \in \{2,3,\ldots\}$. Let $f_i : \R^d \to \R$, $i=1,\ldots,k$ be bounded continuous functions. Suppose that for all $\varepsilon \in (0,1)$ there exists $(\Phi_{r,i,\varepsilon})_{i=1}^{k} \subseteq {\bf N}$ such that for all $x \in B(0,r)$
\begin{equation}\label{eq:lem_DNN_mult_hyp}
\left|f_i(x) - \mathcal R(\Phi_{r,i,\varepsilon})(x)\right| \leq \varepsilon.
\end{equation}
Therefore for all $x \in B(0,r)$
\begin{equation}\label{eq:lem_DNN_mult}
\left|\prod_{i=1}^{k} f_i(x) - \prod_{i=1}^k \mathcal R(\Psi_{r,i,\varepsilon})(x)\right| \leq  C_k\varepsilon,
\end{equation}
where 
\begin{equation}\label{constante_fea}
C_k := \sum_{j=0}^{k-1}  \varepsilon^{j} \Bigg(\sum_{\substack{ i_1,i_2,\ldots, i_k \in\{0,1\}\\ i_1+i_2+\cdots +i_{k} = k-1-j} } \norm{f_{1}}_{\infty}^{i_1} \norm{f_{2}}_{\infty}^{i_2} \cdots \norm{f_{k}}_{\infty}^{i_k}\Bigg). 
\end{equation}
\end{lemma}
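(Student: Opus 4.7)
The plan is to write each DNN realization as a perturbation of its target function and then expand the product. Set $g_i := \mathcal{R}(\Phi_{r,i,\varepsilon})$ and $e_i := g_i - f_i$, so that by hypothesis \eqref{eq:lem_DNN_mult_hyp} we have $|e_i(x)| \leq \varepsilon$ for every $x \in B(0,r)$ and every $i = 1,\ldots,k$. The key observation is the multinomial-style identity, which is the natural extension of $(a+b)(c+d)=ac+ad+bc+bd$ to $k$ factors:
\[
\prod_{i=1}^{k} g_i(x) \;=\; \prod_{i=1}^{k} \left( f_i(x) + e_i(x) \right) \;=\; \sum_{S \subseteq \{1,\ldots,k\}} \prod_{i \in S} e_i(x) \prod_{i \notin S} f_i(x).
\]
Subtracting the $S = \emptyset$ summand, which equals $\prod_{i=1}^{k} f_i(x)$, and then bounding termwise via $|e_i(x)| \leq \varepsilon$ and $|f_i(x)| \leq \|f_i\|_\infty$, yields the clean pointwise estimate
\[
\left| \prod_{i=1}^{k} g_i(x) - \prod_{i=1}^{k} f_i(x) \right| \;\leq\; \sum_{\emptyset \neq S \subseteq \{1,\ldots,k\}} \varepsilon^{|S|} \prod_{i \notin S} \|f_i\|_\infty.
\]

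The remaining step is purely combinatorial bookkeeping to identify this bound with $C_k \, \varepsilon$ as in \eqref{constante_fea}. For each nonempty $S \subseteq \{1,\ldots,k\}$, set $T := \{1,\ldots,k\} \setminus S$ and encode $T$ by its characteristic vector $(i_1,\ldots,i_k) \in \{0,1\}^k$; then $\prod_{i \notin S} \|f_i\|_\infty = \prod_{r=1}^{k} \|f_r\|_\infty^{i_r}$ and $|T| = i_1 + \cdots + i_k$. Writing $|S| = k - |T|$ and reindexing by $j := |S| - 1 \in \{0,\ldots,k-1\}$ so that $|T| = k - 1 - j$, the right-hand side above becomes exactly
\[
\sum_{j=0}^{k-1} \varepsilon^{j+1} \sum_{\substack{i_1,\ldots,i_k \in \{0,1\} \\ i_1 + \cdots + i_k = k-1-j}} \|f_1\|_\infty^{i_1} \cdots \|f_k\|_\infty^{i_k} \;=\; C_k \, \varepsilon,
\]
which is precisely \eqref{eq:lem_DNN_mult}.

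I do not anticipate any genuine obstacle: since the hypothesis directly supplies a uniform bound on $|g_i - f_i|$, the entire argument collapses to an algebraic expansion followed by a relabelling of summation indices. An alternative route would be induction on $k$ starting from the two-factor identity $fg - \tilde f \tilde g = f(g - \tilde g) + \tilde g (f - \tilde f)$, which would recover the same constant; however, the one-shot multinomial expansion has the advantage of making transparent how each term in the somewhat baroque constant \eqref{constante_fea} arises, and it extends without change to the sharper statement $C_k \, \varepsilon = \prod_{i=1}^{k}(\|f_i\|_\infty + \varepsilon) - \prod_{i=1}^{k} \|f_i\|_\infty$, useful for the subsequent applications to branching products in Section \ref{Sec:2b}.
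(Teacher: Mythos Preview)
Your proof is correct, and it takes a genuinely different route from the paper's. The paper argues by induction on $k$: it verifies the cases $k=2$ and $k=3$ by the telescoping identity $\prod f_i - \prod g_i = (f_1-g_1)f_2\cdots f_k + g_1(f_2-g_2)f_3\cdots f_k + \cdots$, bounds each summand using $|g_i|\le \|f_i\|_\infty+\varepsilon$, and then carries the resulting expression through an inductive step to arrive at the constant $C_k$. Your one-shot multinomial expansion $\prod(f_i+e_i)=\sum_{S}\prod_{i\in S}e_i\prod_{i\notin S}f_i$ bypasses the induction entirely and makes the combinatorial structure of $C_k$ transparent; in particular, it immediately exhibits the closed form $C_k\varepsilon=\prod_{i=1}^k(\|f_i\|_\infty+\varepsilon)-\prod_{i=1}^k\|f_i\|_\infty$, which the paper does not state explicitly and instead recovers only in special cases via a separate (and longer) remark. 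The inductive route has the mild advantage of being the same machinery one would use if the perturbations were one-sided or the bounds on $|g_i|$ were not symmetric, but for the lemma as stated your expansion is shorter and more illuminating.
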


\begin{proof}[Proof of Lemma \ref{lemma:mult_DNNs}]
We will prove \eqref{eq:lem_DNN_mult} by induction. Let $r >0$ and for $k \in \N$ let $C=C_k$ be as in the statement of the lemma.  
First, notice from assumption \eqref{eq:lem_DNN_mult_hyp} that for all $i = 1,\ldots,k$:
\begin{equation}\label{eq:DNN_mult_cota_R}
|\mathcal R(\Phi_{r,i,\varepsilon})(x)| \leq |f_i(x)| + |f_i(x) - \mathcal R(\Phi_{r,i,\varepsilon})(x)| \leq \norm{f_i}_{\infty} + \varepsilon.
\end{equation}
For $k=2$ we have by triangle inequality that for all $x \in B(0,r)$
\[
\begin{aligned}
|f_1f_2 - \mathcal R(\Phi_{r,1,\varepsilon})\mathcal R(\Phi_{r,2,\varepsilon})|(x) &\leq |f_1 - \mathcal R(\Phi_{r,1,\varepsilon})|(x) |f_2(x)| + |\mathcal R(\Phi_{r,1,\varepsilon})(x)| |f_2 - \mathcal R(\Phi_{r,2,\varepsilon})|(x).
\end{aligned}
\]
Then using \eqref{eq:lem_DNN_mult_hyp} and \eqref{eq:DNN_mult_cota_R} we have
\[
\begin{aligned}
& |f_1(x)f_2(x) - \mathcal R(\Phi_{r,1,\varepsilon})(x)\mathcal R(\Phi_{r,2,\varepsilon})(x)|  \leq \norm{f_2}_{\infty}\varepsilon + (\norm{f_1}_{\infty} +\varepsilon)\varepsilon = C_2 \varepsilon.
\end{aligned}
\]
This proves \eqref{eq:lem_DNN_mult} for $k=2$. In order to elucidate where the constant $C$ comes from, we perform the case $k=3$:
\[
\begin{aligned}
& |f_1(x)f_2(x)f_3(x) - \mathcal R(\Phi_{r,1,\varepsilon})(x)\mathcal R(\Phi_{r,2,\varepsilon})(x)\mathcal R(\Phi_{r,3,\varepsilon})(x)| \\
& \quad \leq |f_1(x) - \mathcal R(\Phi_{r,1,\varepsilon})(x)| |f_2(x) f_3(x)| + | \mathcal R(\Phi_{r,1,\varepsilon})(x)| |f_2(x)-\mathcal R(\Phi_{r,2,\varepsilon})(x)|  |f_3(x)| \\
& \qquad + | \mathcal R(\Phi_{r,1,\varepsilon})(x)| |\mathcal R(\Phi_{r,2,\varepsilon})(x)|  |f_3(x)-\mathcal R(\Phi_{r,3,\varepsilon})(x)| \\
& \quad \leq \big(\|f_2\|_\infty\|f_3\|_\infty + \|f_3\|_\infty (\varepsilon + \|f_1\|_\infty) +(\varepsilon + \|f_1\|_\infty) (\varepsilon + \|f_2\|_\infty)  \big) \varepsilon \\
& \quad \leq  \big( \|f_2\|_\infty\|f_3\|_\infty + \|f_1\|_\infty\|f_3\|_\infty +\|f_1\|_\infty\|f_2\|_\infty  + \varepsilon (\|f_1\|_\infty + \|f_2\|_\infty + \|f_3\|_\infty ) + \varepsilon^2 \big) \varepsilon\\
& \quad = \varepsilon \sum_{j=0}^2  \varepsilon^{j} \Bigg(\sum_{\substack{ i_1,i_2,i_3 \in\{0,1\}\\ i_1+i_2 +i_3= 2-j} } \norm{f_{1}}_{\infty}^{i_1} \norm{f_{2}}_{\infty}^{i_2}  \norm{f_{3}}_{\infty}^{i_3}  \Bigg) .
\end{aligned}
\]
Now suppose that \eqref{eq:lem_DNN_mult} is satisfied for $k >2$. We have
\[
\begin{aligned}
\left|\prod_{i=1}^{k+1} f_i(x) - \prod_{i=1}^{k+1}\mathcal R(\Phi_{r,i,\varepsilon})(x)\right| &\leq \left|\prod_{i=1}^{k} f_i(x) - \prod_{i=1}^{k}\mathcal R(\Phi_{r,i,\varepsilon})(x)\right| |f_{k+1}(x)| \\
&\hspace{.5cm}+ \left|\prod_{i=1}^{k}\mathcal R(\Phi_{r,i,\varepsilon})(x)\right| |f_{k+1}(x) - \mathcal R(\Phi_{r,k+1,\varepsilon})(x)|.
\end{aligned}
\]
From \eqref{eq:DNN_mult_cota_R}, for all $x \in B(0,r)$ we have
\[
\left|\prod_{i=1}^{k} \mathcal R(\Phi_{r,i,\varepsilon})(x)\right| \leq \prod_{i=1}^{k} \left( \norm{f_i}_{\infty} + \varepsilon \right).
\]
Then, using the inductive hypothesis and \eqref{eq:lem_DNN_mult_hyp},
\[
\begin{aligned}
& \left|\prod_{i=1}^{k+1} f_i(x) - \prod_{i=1}^{k+1}\mathcal R(\Phi_{r,i,\varepsilon})(x)\right| \\
& \quad \leq \varepsilon \Bigg(  \norm{f_{k+1}}_{\infty} \sum_{j=0}^{k-1}  \varepsilon^{j} \Bigg(\sum_{\substack{ i_1,i_2,\ldots, i_k \in\{0,1\}\\ i_1+i_2+\cdots +i_k = k-1-j} } \norm{f_{1}}_{\infty}^{i_1} \norm{f_{2}}_{\infty}^{i_2} \cdots \norm{f_{k}}_{\infty}^{i_k}\Bigg) + \prod_{i=1}^{k} \left( \norm{f_i}_{\infty} + \varepsilon \right)  \Bigg) \\
& \quad = \varepsilon  \Bigg(  \norm{f_{k+1}}_{\infty} \sum_{j=0}^{k-1}  \varepsilon^{j} \Bigg(\sum_{\substack{ i_1,i_2,\ldots, i_k \in\{0,1\}\\ i_1+i_2+\cdots +i_k = k-1-j} } \norm{f_{1}}_{\infty}^{i_1} \norm{f_{2}}_{\infty}^{i_2} \cdots \norm{f_{k}}_{\infty}^{i_k}\Bigg) \\
& \hspace{1.5cm} + \sum_{j=0}^k  \varepsilon^{j} \Bigg(\sum_{\substack{ i_1,i_2,\ldots, i_k \in\{0,1\}\\ i_1+i_2+\cdots +i_k = k-j} } \norm{f_{1}}_{\infty}^{i_1} \norm{f_{2}}_{\infty}^{i_2} \cdots \norm{f_{k}}_{\infty}^{i_k}\Bigg) \Bigg) \\
 & \quad \leq  \varepsilon \sum_{j=0}^{k}  \varepsilon^{j} \Bigg(\sum_{\substack{ i_1,i_2,\ldots, i_k \in\{0,1\}\\ i_1+i_2+\cdots +i_{k+1} = k-j} } \norm{f_{1}}_{\infty}^{i_1} \norm{f_{2}}_{\infty}^{i_2} \cdots \norm{f_{k}}_{\infty}^{i_k}\norm{f_{k+1}}_{\infty}^{i_{k+1}}\Bigg).
\end{aligned}
\]
Therefore \eqref{eq:lem_DNN_mult} is valid for all $k \in \N$.
\end{proof}

{\color{black}
\begin{remark}[Special case for $C_k$]
The constant \eqref{constante_fea} can can be expressed in simpler terms in particular cases. Let $\ell \in \N$ with $\ell < k$ and consider the special case in \eqref{constante_fea} where $f_1= \cdots = f_{\ell} = f$ and $f_{\ell+1} = \cdots = f_{k} = g$. Then one can obtain a simpler bound for $C_k$ in \eqref{constante_fea}, given by
\[
C_k \leq (\|f\|_{\infty}+\varepsilon)^{\ell - 1} \left( \ell\|g\|^{k-\ell} + (k-\ell)(\|f\|_{\infty}+\varepsilon)(\|g\|_{\infty}+\varepsilon)^{k-\ell-1} \right).
\]
Indeed, first for any $0\leq j \leq k-1$ one gets
\begin{equation}\label{ck-1}
\begin{aligned}
&\sum_{\substack{ i_1,i_2,\ldots, i_{k} \in\{0,1\}\\ i_1+i_2+\cdots +i_{k} = k-1-j} }  \norm{f}_{\infty}^{i_1+\cdots+i_{\ell}} \norm{g}_{\infty}^{i_{\ell + 1}+\cdots+i_{k}} \\
&\hspace{3cm}= \sum_{i=0}^{n}  \begin{pmatrix} k-\ell\\ k-1-i-j \end{pmatrix} \begin{pmatrix} \ell\\ i \end{pmatrix} \|f\|_{\infty}^i \|g\|_{\infty}^{k-1-i-j} {\bf 1}_{\{\ell-1-j \leq i \leq k-1-j\}}.  
\end{aligned}
\end{equation}
To see \eqref{ck-1}, notice that
\[
\begin{aligned}
&\sum_{\substack{ i_1,i_2,\ldots, i_{k} \in\{0,1\}\\ i_1+i_2+\cdots +i_{k} = k-1-j} }  \norm{f}_{\infty}^{i_1+\cdots+i_{\ell}} \norm{g}_{\infty}^{i_{\ell+1}+\cdots+i_{k}} \\
&\hspace{2cm}= \sum_{m = 0}^{\ell} \sum_{\substack{ i_1,i_2,\ldots, i_{k} \in\{0,1\}\\ i_1+i_2+\cdots +i_{k} = k-1-j} } \|f\|_{\infty}^{m} \|g\|_{\infty}^{i_{\ell+1}+\cdots+i_{k}}{\bf 1}_{\{i_1+\cdots+i_{\ell} = m\}}\\
&\hspace{2cm}= \sum_{m=0}^{\ell} \|f\|_{\infty}^{m}\sum_{\substack{ i_{\ell+1},\ldots, i_{k} \in\{0,1\}\\ i_{\ell+1}+\cdots +i_{k} = k-1-m-j} } \|g\|_{\infty}^{i_{\ell+1}+\cdots+i_{k}} \sum_{i_{1},i_{2},\ldots,i_{\ell}\in\{0,1\}} {\bf 1}_{\{i_1+\cdots+i_\ell = m\}} \\
&\hspace{2cm} =\sum_{m = 0}^{\ell}\begin{pmatrix} \ell \\ m \end{pmatrix} \|f\|_{\infty}^{m} \|f\|^{k-1-m-j}_{\infty}\sum_{\substack{ i_{\ell+1},\ldots, i_{k} \in\{0,1\}\\ i_{\ell+1}+\cdots +i_{k} = k-1-m-j} } {\bf 1}_{\{0\leq k-1-m-j \leq k-\ell\}} \\
& \hspace{2cm} = \sum_{m=0}^{\ell} \begin{pmatrix} \ell \\ m \end{pmatrix} \begin{pmatrix} k-\ell \\ k-1-m-j \end{pmatrix} \|f\|_{\infty}^{m} \|g\|^{k-1-m-j}_{\infty} {\bf 1}_{\{\ell-1-j\leq m \leq k-1-j\}}.
\end{aligned}
\]
Coming back to the estimates for $C_k$, by using \eqref{ck-1} and the definition of $C_k$ in \eqref{constante_fea} one obtains
\[
C_k = \sum_{j=0}^{k-1} \sum_{i=0}^{\ell}  \begin{pmatrix} k-\ell\\ k-1-i-j \end{pmatrix} \begin{pmatrix} \ell\\ i \end{pmatrix} \|f\|_{\infty}^i\|g\|_{\infty}^{k-1-i-j}\varepsilon^j {\bf 1}_{\{\ell-1 \leq i+j \leq k-1\}}.
\]
As $i+j$ is conditioned to belong to an interval, it makes sense to introduce a variable $m = i + j$ such that $\ell-1 \leq m \leq k-1$. This implies that
\[
C_k = \sum_{m=\ell-1}^{k-1} \sum_{i = 0}^{\ell} \sum_{j=0}^{k-1} \begin{pmatrix} k-\ell\\ k-1-i-j \end{pmatrix} \begin{pmatrix} \ell\\ i \end{pmatrix} \|f\|_{\infty}^i\|g\|_{\infty}^{k-1-i-j}\varepsilon^j {\bf 1}_{\{j= m-i\}}{\bf 1}_{\{0 \leq j \leq k-1\}}.
\]
One of those sums can be cancelled by setting $j = m - i$, with the condition
 \[
 0 \leq j \leq k-1 \quad  \Longrightarrow \quad m-k+1 \leq i \leq m,
 \]
and therefore, $C_k$ can be written only from the sums involving the indexes $i$ and $m$:
 \[
 \begin{aligned}
 C_k &=  \sum_{m=\ell-1}^{k-1} \sum_{i= 0 }^{\ell} \begin{pmatrix} k-\ell\\ k-1-i-(m-i) \end{pmatrix} \begin{pmatrix} \ell\\ i \end{pmatrix} \|f\|_{\infty}^i\|g\|_{\infty}^{k-1-i-(m-i)}\varepsilon^{m-i} {\bf 1}_{\{m-k+1 \leq i \leq m\}}\\
 &= \sum_{m=\ell-1}^{k-1} \sum_{i= 0 \vee (m-k+1)}^{\ell \wedge m} \begin{pmatrix} k-\ell\\ k-1-m \end{pmatrix} \begin{pmatrix} \ell\\ i \end{pmatrix} \|f\|_{\infty}^i\|g\|_{\infty}^{k-1-m}\varepsilon^{m-i}.
 \end{aligned}
 \]
 From this notice the following two facts: 1) As $m \leq k-1$, then always $0 \vee (m-k+1) = 0$; and 2) $\ell \wedge m = \ell$ unless $m = \ell-1$. In that case, $\ell \wedge m = \ell-1$. Then
 \[
 \begin{aligned}
 C_k &= \begin{pmatrix} k-\ell\\ k-\ell \end{pmatrix} \|g\|_{\infty}^{k-\ell} \sum_{i=0}^{\ell-1} \begin{pmatrix} \ell\\ i \end{pmatrix} \|f\|_{\infty}^i \varepsilon^{\ell-1-i} \\
 & \quad + \sum_{m=\ell}^{k-1} \begin{pmatrix} k-\ell\\ k-1-m \end{pmatrix} \|g\|_{\infty}^{k-1-m} \sum_{i= 0 }^{\ell} \begin{pmatrix} \ell\\ i \end{pmatrix} \|f\|_{\infty}^i\varepsilon^{m-i} \\
&\leq \ell \|g\|_{\infty}^{k-\ell}\sum_{i=0}^{\ell-1} \begin{pmatrix} \ell-1\\ i \end{pmatrix} \|g\|_{\infty}^i \varepsilon^{\ell-1-i} + \sum_{m=\ell}^{k-1} \begin{pmatrix} k-\ell\\ k-1-m \end{pmatrix} \|g\|_{\infty}^{k-1-m} \varepsilon^{m-\ell} (\|f\|_{\infty}+ \varepsilon)^{\ell} \\
& = \ell\norm{g}_{\infty}^{k-\ell} (\norm{f}_{\infty}+\varepsilon)^{\ell-1} + (\norm{f}_{\infty}+\varepsilon)^{\ell}\sum_{m=0}^{k-1-\ell} \begin{pmatrix} k-\ell\\ k-1-\ell-m \end{pmatrix} \|g\|_{\infty}^{k-1-\ell-m} \varepsilon^{m}\\
&\leq  \ell\norm{g}_{\infty}^{k-\ell} (\norm{f}_{\infty}+\varepsilon)^{\ell-1} + (k-\ell)(\norm{f}_{\infty}+\varepsilon)^{\ell}\sum_{m=0}^{k-1-\ell} \begin{pmatrix} k-1-\ell\\ m \end{pmatrix} \|f\|_{\infty}^{k-1-\ell-m} \varepsilon^{m}.
 \end{aligned}
 \]
 Finally, one has
 \begin{equation}\label{ck-2}
 C_k \leq (\norm{f}_{\infty}+\varepsilon)^{\ell-1}\left(\ell\norm{g}_{\infty}^{k-\ell} + (k-\ell)(\norm{f}_{\infty}+\varepsilon)(\norm{g}_{\infty}+\varepsilon)^{k-\ell-1}\right).
 \end{equation}
 This bound will be particularly useful in the case of a nonlinear wave model.

\end{remark}
}

Let us recall the following result, proved by D. Yarotsky \cite{Dmitry}.

\begin{lemma}\label{lemma:DNN_mult_2}
Let $R>0$. For all $\varepsilon \in \left(0,\frac 12\right)$ there exists $\Phi_{R,\varepsilon} \in {\bf N}$ with $k_0=2$ and $k_{H+1}=1$ such that $\mathcal R(\Phi_{R,\varepsilon}):\mathbb R^2 \to \mathbb R$, and
\[
|xy - \mathcal R(\Phi_{R,\varepsilon})(x,y)| \leq \varepsilon, \quad \hbox{for all $x,y \in [-R,R]$.}
\]
Moreover, one has $\mathcal W(\Phi_{R,\varepsilon}) \leq 5$ and $\mathcal H(\Phi_{R,\varepsilon}) \leq C(\log\lceil R \rceil + \log \lceil \varepsilon^{-1}\rceil)$, for some universal $C > 0$.
\end{lemma}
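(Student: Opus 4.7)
\medskip

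\noindent\emph{Proof proposal.} The plan is to follow the classical construction of Yarotsky: first approximate the squaring map on $[0,1]$ by iterated tent functions, then recover multiplication from the polarization identity $xy=\tfrac14\bigl((x+y)^2-(x-y)^2\bigr)$ after rescaling the domain to $[-1,1]^2$.

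\medskip

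First I would construct a ReLU approximation of $s\mapsto s^2$ on $[0,1]$. Consider the tent map $g(s)=2\sigma(s)-4\sigma(s-\tfrac12)$, which agrees with $2\min(s,1-s)$ on $[0,1]$ and is realized by a depth--1, width--2 ReLU network. Let $g_k$ denote the $k$-fold self-composition of $g$. The classical identity
\[
s^2 \;=\; s\;-\;\sum_{k=1}^{\infty}\frac{g_k(s)}{4^{k}},\qquad s\in[0,1],
\]
shows that the partial sums $S_n(s):=s-\sum_{k=1}^n 4^{-k}g_k(s)$ satisfy $|s^2-S_n(s)|\le 4^{-n-1}$ uniformly on $[0,1]$. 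By pipelining the computation so that each layer carries the input $s$, the current value $g_k(s)$, and the running accumulator $\sum_{j\le k}4^{-j}g_j(s)$ in parallel channels, $S_n$ is realized by a ReLU DNN of depth $O(n)$ and constant width (at most $3$), using Lemmas \ref{lemma:DNN_id}, \ref{lemma:DNN_comp} and \ref{lemma:DNN_affine}.

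\medskip

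Next I would polarize and rescale. For $(x,y)\in[-R,R]^2$ set $u=(x+y)/(2R)$, $v=(x-y)/(2R)$, so that $u,v\in[-1,1]$ and $xy=R^2(u^2-v^2)$. Since $u^2=(\sigma(u)+\sigma(-u))^2$ with $\sigma(u)+\sigma(-u)\in[0,1]$, prepending one absolute-value layer (width $2$) to $S_n$ gives a DNN approximation of $u\mapsto u^2$ on $[-1,1]$ with the same error bound. Parallelizing two copies, one for $u$ and one for $v$, and taking the affine combination $R^2(S_n\circ|\cdot|(u)-S_n\circ|\cdot|(v))$ via Lemmas \ref{lemma:DNN_para} and \ref{lemma:DNN_affine}, produces a DNN $\Phi_{R,\varepsilon}$ of depth $O(n)$ and width $\le 5$ satisfying
\[
\bigl|xy-\mathcal R(\Phi_{R,\varepsilon})(x,y)\bigr|\;\le\;R^2\bigl(|u^2-S_n(|u|)|+|v^2-S_n(|v|)|\bigr)\;\le\;\tfrac12 R^2\,4^{-n}.
\]
Choosing $n=\bigl\lceil \log_4(R^2/(2\varepsilon))\bigr\rceil=O(\log\lceil R\rceil+\log\lceil \varepsilon^{-1}\rceil)$ yields the claimed error bound together with $\mathcal H(\Phi_{R,\varepsilon})\le C(\log\lceil R\rceil+\log\lceil \varepsilon^{-1}\rceil)$ and $\mathcal W(\Phi_{R,\varepsilon})\le 5$.

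\medskip

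\noindent\emph{Main obstacle.} The only nontrivial part is the width bound. The error estimates are immediate from the geometric decay $4^{-n}$, but arranging the iterated composition $g_n$ together with the running partial sum $S_n$ into a DNN of genuinely bounded width requires careful layer-by-layer bookkeeping: each hidden layer must simultaneously propagate (i) the intermediate value in the $g$-orbit, (ii) the accumulator, and (iii) an identity copy needed for the next self-composition, all within a fixed number of neurons. This is where the explicit identity-representation lemma (Lemma \ref{lemma:DNN_id}) is essential. Cutting the width down to exactly $5$ (versus some larger absolute constant) is therefore a matter of optimizing the channel layout of this pipeline, and is the sole delicate step of the proof.
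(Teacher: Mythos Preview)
Your proposal is correct and reproduces exactly the classical Yarotsky construction that the paper has in mind; in fact the paper does not give its own proof of this lemma at all but simply states it as a result ``proved by D.~Yarotsky \cite{Dmitry}'' and moves on. So there is nothing to compare: you have supplied the proof that the paper outsources to the reference, and your handling of the width bookkeeping (pipelining the $g$-orbit, the accumulator, and an identity channel) is precisely the delicate point Yarotsky addresses.
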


In this paper, we need an improvement of the previous result, considering a general situation involving arbitrary $k$ multiplications as functions defined in $\mathbb R^k$. Indeed, we shall prove the following lemma.

{\color{black}
\begin{lemma}\label{lemma:DNN_mult_k}
Let $R \in (0,1)$, $k \in \N$ with $k \geq 2$. For all $\varepsilon \in \left(0, \frac 12\right)$ there exists $\Phi^k_{R,\varepsilon} \in {\bf N}$ such that for all $x \in [-R,R]^{k}$
\begin{equation}\label{eq:lemma_DNN_mult_eps}
\left| \prod_{i=1}^{k} x_i - \mathcal R\left(\Phi_{R,\varepsilon}^{k}\right)(x) \right| \leq (k-1)\varepsilon R^{k},
\end{equation}
and
\begin{equation}\label{eq:lemma_DNN_mult_R}
\left|  \mathcal R\left(\Phi_{R,\varepsilon}^{k}\right)(x) \right| \leq kR^{k}.
\end{equation}
Moreover, $\mathcal W(\Phi_{R,\varepsilon}^k) \leq 2k + 1$ and there exists $C>0$ such that
\begin{equation}\label{eq:lemma_DNN_mult_H}
\mathcal H(\Phi_{R,\varepsilon}^{k}) \leq C k \left( \log\lceil \varepsilon^{-1}\rceil + \log (k) + k \log \lceil R^{-1} \rceil \right).
\end{equation}
Finally, the amount of nonzero parameters of the DNN satisfies
\begin{equation}\label{cota en P}
\mathcal P(\Phi^k_{R,\varepsilon}) \leq C' k^4 \left(\log \lceil \varepsilon^{-1}\rceil + 1 + \log\lceil R^{-1} \rceil \right),
\end{equation}
for some universal $C' > 0$.
\end{lemma}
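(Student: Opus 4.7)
The plan is to construct $\Phi_{R,\varepsilon}^k$ by induction on $k$, taking as elementary building block the two-variable multiplier from Lemma \ref{lemma:DNN_mult_2}, and as glue the parallelization (Lemma \ref{lemma:DNN_para}), composition (Lemma \ref{lemma:DNN_comp}), and the ReLU-identity representations (Lemma \ref{lemma:DNN_id}) already available from the DNN algebra.

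For the base case $k=2$, I would simply set $\Phi_{R,\varepsilon}^{2} := \Phi_{R,\varepsilon R^2}$ from Lemma \ref{lemma:DNN_mult_2}, which immediately delivers the error $(2-1)\varepsilon R^2$, the output bound $2R^2$, width at most $5 = 2\cdot 2 + 1$, and depth $\lesssim \log\lceil\varepsilon^{-1}\rceil + 2\log\lceil R^{-1}\rceil$ (using that $\lceil R\rceil=1$ since $R\in (0,1)$).

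The inductive step is where the bulk of the work happens. Assume a DNN $\Phi_{R,\varepsilon}^{k-1}$ enjoying the claimed bounds exists. I would first form the parallel DNN $\Psi_{k-1} := \Phi_{R,\varepsilon}^{k-1} \parallel \Sigma_{H_{k-1},1}$ via Lemma \ref{lemma:DNN_para}, using an identity DNN of matching depth $H_{k-1} := \mathcal H(\Phi_{R,\varepsilon}^{k-1})$ to carry the last variable $x_k$ forward; the output of $\Psi_{k-1}$ then lies in $[-k,k]^2$, since the inductive output bound gives $|\tilde y|\leq (k-1)R^{k-1}\leq k-1$ while $|x_k|\leq R\leq 1$. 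I would then define $\Phi_{R,\varepsilon}^{k} := \Phi^{\mathrm{mult}}_{k,\varepsilon R^k} \circ \Psi_{k-1}$, where $\Phi^{\mathrm{mult}}_{k,\varepsilon R^k}$ is the two-variable multiplier from Lemma \ref{lemma:DNN_mult_2} tuned to the range $[-k,k]$ and accuracy $\varepsilon R^k$, with the composition realized via Lemma \ref{lemma:DNN_comp}. The error bound \eqref{eq:lemma_DNN_mult_eps} then follows from one application of the triangle inequality,
\[
\Bigl|\prod_{i=1}^{k} x_i - \mathcal R(\Phi_{R,\varepsilon}^{k})(x)\Bigr| \leq |\tilde y - y|\,|x_k| + \varepsilon R^k \leq (k-2)\varepsilon R^{k-1}\cdot R + \varepsilon R^k = (k-1)\varepsilon R^k,
\]
and the output bound \eqref{eq:lemma_DNN_mult_R} follows by combining the triangle inequality with the true product bound $R^k$. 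Width adds $2$ per step (the cost of the identity DNN carrying $x_k$), so by induction $\mathcal W(\Phi_{R,\varepsilon}^k) \leq 2k+1$. Depth adds $\lesssim \log\lceil k\rceil + \log\lceil \varepsilon^{-1}\rceil + k\log\lceil R^{-1}\rceil$ per step by Lemma \ref{lemma:DNN_mult_2} applied on $[-k,k]$, and summing over the $k-1$ inductive steps gives \eqref{eq:lemma_DNN_mult_H}. The parameter bound \eqref{cota en P} is then recovered from the crude universal estimate $\mathcal P \leq (\mathcal H+1)\mathcal W(\mathcal W+1)$, which yields $\mathcal P \lesssim k\cdot(\log\lceil\varepsilon^{-1}\rceil+k\log\lceil R^{-1}\rceil+\log k)\cdot k^2 \lesssim k^4(\log\lceil\varepsilon^{-1}\rceil + 1 + \log\lceil R^{-1}\rceil)$.

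The main subtlety I expect is the coordination of (i) the accuracy $\varepsilon R^k$ selected at the top level so that the error telescopes cleanly to $(k-1)\varepsilon R^k$, together with (ii) the range on which the two-variable multiplier must be valid. One cannot reuse a multiplier on $[-R,R]^2$ in the inductive step, because the partial product $\tilde y$ is only known to lie in $[-(k-1) R^{k-1}, (k-1)R^{k-1}]$, which need not sit inside $[-R,R]$ for moderate $k$; enlarging the multiplier's domain to $[-k,k]^2$ is what introduces the $\log k$ and $k\log\lceil R^{-1}\rceil$ factors in the depth, and hence the $k^4$ factor in the parameter count. Carefully matching the depths in the parallelization step (so that Lemma \ref{lemma:DNN_para} applies) will also require a routine but nontrivial bookkeeping of the identity extensions.
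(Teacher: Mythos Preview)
Your proposal is correct and follows essentially the same approach as the paper: an inductive construction that parallelizes the $(k-1)$-fold multiplier with a ReLU identity to carry $x_k$, then composes with a two-variable multiplier from Lemma \ref{lemma:DNN_mult_2} whose domain is enlarged to accommodate the partial product and whose accuracy is set to $\varepsilon R^k$ so the error telescopes. The only cosmetic difference is that the paper picks the step-$i$ multiplier range as $R_i=\max\{R,iR^i\}$ rather than your cruder $[-k,k]$, but this affects neither the structure of the argument nor the order of the final depth, width, and parameter bounds.
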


{\color{black} A similar result is also valid for $R>1$. In that case, by setting $\varepsilon_i = \varepsilon$ and $R_i = iR^i$, Lemma \ref{lemma:DNN_mult_k} can be concluded, with a sightly modification in \eqref{eq:lemma_DNN_mult_H} and \eqref{cota en P}, where $\log\lceil R\rceil$ will be placed instead of $\log\lceil R^{-1}\rceil$.}

\begin{proof}
Let $R \in (0,1)$, $k \in \N$ with $k \geq 2$ and $\varepsilon \in \left(0,\frac 12\right)$.  For $i \in \{1,...,k-1\}$ let 
\[
\varepsilon_i := \varepsilon R^{i+1} \in \left(0,\frac 12 \right), \hspace{1cm} R_i := \max\{R,i R^i\},
\] 
and let $\Phi_{R_i,\varepsilon_i} \in {\bf N}$ be the Deep Neural Network in $\mathbb R^2$ obtained by Lemma \ref{lemma:DNN_mult_2} and approximating $[-R_i,R_i]^2 \ni (x,y)\longmapsto xy\in [-R_i^2,R_i^2]$ with precision $\varepsilon_i$.  We define $y_i:=y_i(x_1,\ldots,x_{i+1},R,\varepsilon)$, $i \in \{1,\ldots,k-1\}$ as follows: for $i=1$,
\[
y_1 := \mathcal R\left(\Phi_{R_1,\varepsilon_1}\right)(x_1,x_2),
\]
and if $i \in \{2,\ldots,k-1\}$,
\[
y_i := \mathcal R\left(\Phi_{R_{i},\varepsilon_{i}}\right)\left(y_{i-1},x_{i+1}\right).
\]
We will prove for all $i \in \{1,\ldots,k-1\}$ the following two statements:
\begin{enumerate}
\item For all $x \in [-R,R]^{i+1}$,
\[
\left|\prod_{\ell = 1}^{i+1} x_{\ell} - y_i\right| \leq i \varepsilon R^{i+1},
\]
and
\[
\left|y_i\right| \leq (i+1)R^{i+1}.
\]
\item $y_i$ is the realization of some DNN $\Psi_i := \Psi_{i,R,\varepsilon} \in {\bf N}$ satisfying 
\[
\mathcal H(\Psi_i) = \sum_{\ell=1}^{i} \mathcal H\left(\Phi_{R_{\ell},\varepsilon_{\ell}}\right) + (i-1), \quad \hbox{and}\quad 
\mathcal W(\Psi_i) \leq 2i+3.
\]
\end{enumerate}
From Lemma \ref{lemma:DNN_mult_2}, $y_1$ satisfies 
\[
\left|x_1x_2 - y_1\right| = \left|x_1x_2 - \mathcal R\left(\Phi_{R_1,\varepsilon_1}\right)(x_1,x_2)\right| \leq \varepsilon_1 = \varepsilon R^2,
\]
and
\[
\left|y_1\right|\leq R^2 + \varepsilon_1 = (1+\varepsilon)R^2 \leq 2R^2.
\]
Therefore $(1)$ is true for $y_i$. Moreover, the definition of $y_1$ directly implies $(2)$. Given $y_{i-1}$ satisfying $(1)$ and $(2)$ we will prove the two statements for $y_i$. Indeed, for all $x \in [-R,R]^{i+1}$.
\[
\left|\prod_{\ell = 1}^{i+1} x_{\ell} - y_i\right| \leq\left|\prod_{\ell = 1}^{i} x_{\ell} - y_{i-1}\right|\left|x_{i+1}\right|+\left|y_{i-1}x_{i+1} - y_i\right|.
\]
Statement $(1)$ for $y_{i-1}$ and the definition of $R_i$ one has $(y_{i-1},x_{i+1}) \in [-R_i,R_i]^2$. Then
\[
|y_{i-1}x_{i+1} - y_i| = \left|y_{i-1}x_{i+1} - \mathcal R(\Phi_{R_i,\varepsilon_i})\left(y_{i-1},x_{i+1}\right)\right| \leq \varepsilon_i = \varepsilon R^{i+1}.
\]
Therefore 
\[
\left|\prod_{\ell = 1}^{i+1} x_{\ell} - y_i\right| \leq (i-1)\varepsilon R^{i+1} + \varepsilon R^{i+1} = i\varepsilon R^{i+1}.
\]
In addition, triangle inequality and statement $(1)$ implies that
\[
|y_i| \leq |y_{i-1} x_{i+1}| + \varepsilon_i \leq iR^{i+1} + \varepsilon R^{i+1} \leq (i+1)R^{i+1}.
\]
Then $y_i$ satisfies $(1)$. By assuming that $y_{i-1}$ is the realization of $\Psi_{i-1} \in {\bf N}$, $(y_{i-1},x_{i+1})$ can be seen as the realization of a DNN $\varphi_{i} \in {\bf N}$ that parallelizes $\Psi_{i-1}$ and the identity with $\mathcal H(\Psi_{i-1})$ hidden layers, $\Sigma_{\mathcal H(\Psi_{i-1})}$, that is, for all $x \in [-R,R]^{i+1}$
\[
\mathcal R(\varphi_{i})(x) = \left(\mathcal R(\Psi_{i-1})(x_1,\ldots,x_i),\mathcal R\left(\Sigma_{\mathcal H(\Psi_{i-1})}\right)(x_{i+1})\right).
\]
From Lemma \ref{lemma:DNN_para}, $\varphi_{i}$ satisfies $\mathcal H(\varphi_i) = \mathcal H(\Psi_{i-1})$ and
\[
\mathcal W(\varphi_i) \leq \mathcal W(\Psi_{i-1}) + \mathcal W\left(\Sigma_{\mathcal H(\Psi_{i-1})}\right) \leq \mathcal W(\Psi_{i-1}) + 2.
\]
Notice that $y_i$ is the composition of the DNNs $\Phi_{R_i,\varepsilon_i}$ and $\varphi_{i}$, namely, for all $x \in [-R,R]^{i+1}$
\[
y_i = \left(\mathcal R(\Phi_{R_i,\varepsilon_i}) \circ \mathcal R(\varphi_i)\right)(x).
\]
Therefore, Lemma \eqref{lemma:DNN_comp} implies that $y_i$ is the realization of some DNN $\Psi_i$ satisfying
\[
\mathcal H(\Psi_i) = \mathcal H(\Phi_{R_i,\varepsilon_i}) + \mathcal H(\Psi_{i-1}) + 1,
\]
and
\[
\mathcal W(\Psi_i) \leq \max\{4,\mathcal W(\Phi_{R_i,\varepsilon_i}),\mathcal W(\Psi_{i-1}) + 2\}.
\]
Statement $(2)$ for $y_{i-1}$ implies that 
\[
\mathcal H(\Psi_i) = \mathcal H(\Phi_{R_i,\varepsilon_i}) \sum_{\ell = 1}^{i-1} \mathcal H(\Phi_{R_{\ell},\varepsilon_{\ell}}) + i-2 + 1 = \sum_{\ell = 1}^{i} \mathcal H(\Phi_{R_{\ell},\varepsilon_{\ell}}) + i-1,
\]
and
\[
\mathcal W(\Psi_i) \leq \max\{4,5,2i + 1 + 2\} = 2i + 3.
\]
Therefore $y_i$ satisfies $(2)$. To conclude the proof of Lemma \eqref{lemma:DNN_mult_k}, define $\Phi_{R,\varepsilon}^k \in {\bf N}$ from its realization
\[
\mathcal R\left(\Phi_{R,\varepsilon}^k\right)(x) := y_{k-1} = \mathcal R(\Psi_{k-1})(x).
\]
$\Phi_{R,\varepsilon}^k$ satisfies \eqref{eq:lemma_DNN_mult_eps} and \eqref{eq:lemma_DNN_mult_R} from statement $(1)$ for $i=k-1$. Additionally, from statement $(2)$ it follows that
\[
\mathcal W\left(\Phi_{R,\varepsilon}^k\right) \leq 2k+1, \quad \hbox{and}\quad 
\mathcal H\left(\Phi_{R,\varepsilon}^k\right) = \sum_{i = 1}^{k-1} \mathcal H(\Phi_{R_{i},\varepsilon_{i}}) + k-2.
\]
Lemma \ref{lemma:DNN_mult_2} implies that for any $i \in \{1,\ldots,k-1\}$
\[
\begin{aligned}
\mathcal H(\Phi_{R_{i},\varepsilon_i}) &\leq C(\log\lceil R_i\rceil + \log\lceil \varepsilon_i^{-1}\rceil) \leq C(\log(i) + \log\lceil \varepsilon^{-1}\rceil + (i+1)\log\lceil R^{-1}\rceil).
\end{aligned}
\]
Therefore 
\[
\begin{aligned}
\mathcal H\left(\Phi_{R,\varepsilon}^k\right) &\leq C(k-1)(\log(k-1) + k\log\lceil R^{-1}\rceil + \log\lceil \varepsilon^{-1}\rceil) + k -2 \\
&\leq Ck(\log(k) + k\log\lceil R^{-1}\rceil + \log\lceil \varepsilon^{-1}\rceil).
\end{aligned}
\]
This proves \eqref{eq:lemma_DNN_mult_H}. Finally, a direct consequence of the previous results is the following: the bound $\mathcal W(\Phi^k_{R,\varepsilon}) \leq 2k+1$ and \eqref{eq:lemma_DNN_mult_H} imply that the amount of nonzero parameters of the DNN satisfies
\[
\begin{aligned}
\mathcal P\left(\Phi^k_{R,\varepsilon}\right) & \leq \left(\mathcal H\left(\Phi^k_{R,\varepsilon}\right) + 1\right) \mathcal W\left(\Phi^k_{R,\varepsilon}\right) \left(\mathcal W \left(\Phi^k_{R,\varepsilon}\right) + 1\right) \\
&\leq C' k^4 \left(\log \lceil \varepsilon^{-1}\rceil + 1 + \log\lceil R^{-1} \rceil \right),
\end{aligned}
\]
for some $C' > 0$. This proves \eqref{cota en P} and finishes the proof of Lemma \ref{lemma:DNN_mult_k}.
\end{proof}

}

We finish this section with a corollary on the product of DNNs, obtained directly from Lemmas \ref{lemma:DNN_mult_k}, \ref{lemma:DNN_comp} and \ref{lemma:DNN_para}.

\begin{cor}[DNN approximating the products of $k$ DNN with fixed deep]\label{lemma:mult_dnn_final}
Let $r>0$, $H,k \in \N$ and $R > 0$. Let $(\Phi_{i})_{i=1}^{k} \subseteq {\bf N}$ satisfy
\begin{enumerate}
\item[$(i)$] For all $i=1,\ldots,k$, $\mathcal H(\Phi_i) = H$.
\item[$(ii)$] For all $x \in B(0,r)$,
\[
|\mathcal R(\Phi_i)(x)| \leq R.
\]
\end{enumerate}
Therefore for all $\varepsilon \in (0,1)$ there exists $\Psi^{k}_{R,\varepsilon} \in {\bf N}$ such that for all $x \in B(0,r)$
\[
\left|\prod_{i=1}^{k} \mathcal R(\Phi_i)(x) - \mathcal R(\Psi^k_{R,\varepsilon})(x) \right| \leq \varepsilon.
\] 
Moreover, $\mathcal W(\Psi^k_{R,\varepsilon}) \leq \max\left\{ 2k+1,\sum_{i=1}^{k} \mathcal W(\Phi_i)\right\}$ and there exist universal constants $C,C'>0$ such that
\[
\begin{aligned}
\mathcal H(\Psi^k_{R,\varepsilon}) \leq Ck \left( k\log\lceil R\rceil + \log \lceil \varepsilon^{-1} \rceil + \log (k) \right) + H,
\end{aligned}
\]
and
%
\[
\mathcal P(\Psi^k_{R,\varepsilon}) \leq 2C'k^4 \left( \log\lceil \varepsilon^{-1}\rceil + 1 + \log \lceil R \rceil \right) + 2\sum_{i=1}^{k} \mathcal P(\Phi_i).
\]
\end{cor}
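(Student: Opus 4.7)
The plan is to reduce the corollary to a composition of two constructions: a parallelization (possible thanks to hypothesis $(i)$ that all $\Phi_i$ share the common depth $H$) that outputs the vector $(\mathcal R(\Phi_i)(x))_{i=1}^k$, and the explicit DNN approximation of the $k$-fold product on $[-R,R]^k$ given by Lemma \ref{lemma:DNN_mult_k} together with the remark covering $R\geq 1$.

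First I would fix the precision $\varepsilon':=\varepsilon/((k-1)R^k)$ for the multiplication DNN so that Lemma \ref{lemma:DNN_mult_k} yields $\Phi^k_{R,\varepsilon'}\in{\bf N}$ with $|\prod_{i=1}^{k} x_i - \mathcal R(\Phi^k_{R,\varepsilon'})(x)| \leq (k-1)\varepsilon' R^k = \varepsilon$ uniformly over $x\in[-R,R]^k$. Second, using the remark after Lemma \ref{lemma:DNN_para} on parallelization with a shared input layer (available because $\mathcal H(\Phi_i)=H$ for every $i$), I would build $\varphi\in{\bf N}$ with $\mathcal R(\varphi)(x)=(\mathcal R(\Phi_1)(x),\ldots,\mathcal R(\Phi_k)(x))$, $\mathcal H(\varphi)=H$, $\mathcal W(\varphi)\leq\sum_{i=1}^k\mathcal W(\Phi_i)$ and $\mathcal P(\varphi)=\sum_{i=1}^k\mathcal P(\Phi_i)$. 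Finally, I define $\Psi^k_{R,\varepsilon}:=\Phi^k_{R,\varepsilon'}\circ\varphi$ via Lemma \ref{lemma:DNN_comp}, which is legal since the output dimension $k$ of $\varphi$ matches the input dimension of $\Phi^k_{R,\varepsilon'}$.

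The approximation estimate is immediate: by hypothesis $(ii)$ one has $(\mathcal R(\Phi_i)(x))_{i=1}^k\in[-R,R]^k$ for every $x\in B(0,r)$, hence
\[
\left|\prod_{i=1}^{k}\mathcal R(\Phi_i)(x)-\mathcal R(\Psi^k_{R,\varepsilon})(x)\right| =\left|\prod_{i=1}^{k} y_i-\mathcal R(\Phi^k_{R,\varepsilon'})(y_1,\ldots,y_k)\right|\leq (k-1)\varepsilon' R^k=\varepsilon,
\]
where $y_i:=\mathcal R(\Phi_i)(x)$. For the structural bounds I would propagate the estimates from Lemmas \ref{lemma:DNN_comp}, \ref{lemma:DNN_para} and \ref{lemma:DNN_mult_k}: the composition rule gives $\mathcal H(\Psi^k_{R,\varepsilon})=\mathcal H(\Phi^k_{R,\varepsilon'})+H+1$, $\mathcal W(\Psi^k_{R,\varepsilon})\leq\max\{2k,\mathcal W(\varphi),\mathcal W(\Phi^k_{R,\varepsilon'})\}\leq\max\{2k+1,\sum_{i=1}^k\mathcal W(\Phi_i)\}$ (using $\mathcal W(\Phi^k_{R,\varepsilon'})\leq 2k+1$), and $\mathcal P(\Psi^k_{R,\varepsilon})\leq 2\mathcal P(\Phi^k_{R,\varepsilon'})+2\mathcal P(\varphi)$. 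Substituting the explicit bounds $\mathcal H(\Phi^k_{R,\varepsilon'})\leq Ck(k\log\lceil R\rceil+\log\lceil(\varepsilon')^{-1}\rceil+\log k)$ and $\mathcal P(\Phi^k_{R,\varepsilon'})\leq C'k^4(\log\lceil(\varepsilon')^{-1}\rceil+1+\log\lceil R\rceil)$, and absorbing $\log\lceil(\varepsilon')^{-1}\rceil\leq\log\lceil\varepsilon^{-1}\rceil+\log(k-1)+k\log\lceil R\rceil$ into updated universal constants, yields the stated inequalities.

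The proof is essentially mechanical bookkeeping and I do not anticipate any genuine obstacle. The only mildly delicate decisions are the choice of $\varepsilon'$, which must include the factor $R^k$ to offset the error amplification inside the iterated multiplication DNN, and the invocation of the $R\geq 1$ branch of Lemma \ref{lemma:DNN_mult_k} (via the remark following its proof) so that the final bounds appear with $\log\lceil R\rceil$ rather than $\log\lceil R^{-1}\rceil$, in agreement with the statement of the corollary.
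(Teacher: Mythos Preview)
Your proposal is correct and follows exactly the approach the paper indicates: the corollary is stated as a direct consequence of Lemmas \ref{lemma:DNN_mult_k}, \ref{lemma:DNN_comp} and \ref{lemma:DNN_para}, and your construction (parallelize the $\Phi_i$ via Lemma \ref{lemma:DNN_para}, then compose with the multiplication DNN of Lemma \ref{lemma:DNN_mult_k} via Lemma \ref{lemma:DNN_comp}) is precisely this. Your bookkeeping on the choice of $\varepsilon'$, the use of the $R\geq 1$ remark, and the absorption of the extra $\log$-terms into the universal constants is the right way to close the argument.
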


\newpage

\section{Universal approximation: the linear case}\label{sec:4}

The purpose of this section is to show a preliminary result that will be used to prove Theorem \ref{MT1}. In this setting, we shall only consider the linear wave model with given initial data and fixed source term.

\subsection{Setting} Let $f,F$ be bounded continuous functions. Let $u$ be the unique $C_b^0([0,\infty)\times \R^d,\R)$ solution of
	\[
	\begin{cases}
		\partial_{tt} u - \Delta u = F & \hbox{ on } (0,\infty)\times \R^d,\\
		u(0,\cdot) = 0 & \hbox{ on } \R^d,\\
		\partial_t u(0,\cdot) = f & \hbox{ on } \R^d,
	\end{cases}
	\]
	which by Theorem \ref{prop:sol-linear} is given for all $t \in [0,\infty)$, $x \in \R^d$ by the stochastic representation
	\be\label{eq:u_homo}
	U(t,x) = \E \left[\textbf{1}_{\{\tau \geq t\}}\frac{t}{\overline{\rho}(t)}f(x+tZ)\right] + \E \left[\textbf{1}_{\{\tau < t\}}\frac{\tau}{\rho(\tau)}F(t-\tau,x+tZ)\right].
	\ee
In order to prove our first result, let us state the main hypotheses needed:

\begin{assumptions}\label{ass1}
Fix a dimension $d\in \{1,2,3\}$ and let $f,F$ be bounded continuous functions such that \eqref{eq:u_homo} is satisfied. Let $B,T,\beta>0$, $p,r \in \N$, $q \in \N \cap [2,\infty)$ and $\alpha \in [2,\infty)$. For every $\varepsilon \in (0,1]$, let $\Phi_{f,d,\varepsilon}, \Phi_{F,d,\varepsilon} \in \emph{\textbf{N}}$ be deep neural networks satisfying the following conditions:
	\begin{enumerate}
	\item[(a)] Continuous realizations: for all $x \in \R^d$ and $t \in [0,T]$, 
	\[
	\mathcal{R}(\Phi_{f,d,\varepsilon}) \in C(\R^{d},\R), \quad \mathcal{R}(\Phi_{F,d,\varepsilon}) \in C(\R^{d+1},\R).
	\]
	\item[(b)]  Prescribed growth of realizations: for all $x \in \R^d$ and $t \in [0,T]$,
	\be\label{eq:R_f}
	|\mathcal{R}(\Phi_{f,d,\varepsilon})(x)| \leq Bd^p (1+|x|)^{pq},
	\ee
	and
	\be\label{eq:R_F}
	|\mathcal{R}(\Phi_{F,d,\varepsilon})(t,x)| \leq Bd^p (1+t+|x|)^{pq}.
	\ee
	\item[(c)] DNN approximation: for all $x \in \R^d$ and $t \in [0,T]$,
	\be\label{eq:delta_f}
	|f(x)-\mathcal{R}(\Phi_{f,d,\varepsilon})(x)| \leq \varepsilon B d^p (1+|x|)^{pq},
	\ee
	\be\label{eq:delta_F}
	|F(t,x)-\mathcal{R}(\Phi_{F,d,\varepsilon})(t,x)| \leq \varepsilon B d^p (1+t+|x|)^{pq},
	\ee
	\item[(d)] A priori estimates on the number of parameters of the \emph{DNN} approximation of the initial data.
	\be\label{eq:cota_param_phiu}
	\mathcal{P}(\Phi_{f,d,\varepsilon}) \leq Bd^p \varepsilon^{-\alpha}, \hspace{1cm} \mathcal{P}(\Phi_{F,d,\varepsilon}) \leq Bd^p \varepsilon^{-\alpha},
	\ee
	\item[(e)] A priori estimates on the number of hidden layers of the \emph{DNN} approximation of the initial data.
	\be\label{eq:dim_dnn_encontradas}
	\mathcal{H}(\Phi_{f,d,\varepsilon}) \leq Bd^p \varepsilon^{-\beta}, \hspace{1cm} \mathcal{H}(\Phi_{F,d,\varepsilon}) \leq Bd^p \varepsilon^{-\beta}.
	\ee
	\item[(f)] Finally, let $\nu$ be a probability measure on $\R^d$ such that 
	\be\label{eq:nu}
	\left(\int_{\R^d} |x|^{2pq} \nu(dx)\right)^{\frac{1}{2pq}} \leq Bd^r.
	\ee
	\end{enumerate}
\end{assumptions}

Assumptions \ref{ass1} are standard in the literature, and represent the fact that one cannot expect a good approximation of the solution if the (initial) data of the problem is not well-approximated a priori. Assumptions (c) are good in the light cone region $|x|<t$, $0\leq t\leq T$, after this they become useless (specially in space), but polynomial growth bounds are required to apply assumption (f). Finally, assumption (f) is also required because universality is a concept deeply related to compactness: usually one cannot approximate functions in unbounded domains unless weighted estimates or compact subsets are used, exactly as in the Stone-Weierstrass theorem. 

\begin{theorem}[Universal approximation of linear waves]\label{thrm:main_linear}
Under Assumptions \ref{ass1}, the following is satisfied: for all $t \in [0,T]$, $\varepsilon \in (0,1]$ there exists a \emph{DNN} $\Phi_{d,t,\varepsilon} \in \emph{\textbf{N}}$, and constants $\overline{B}:=\overline{B}(p,q,B,T,\alpha,\beta)>0$ and $\eta:=\eta(p,q,r,\alpha,\beta) >0$, such that $\mathcal{R}(\Phi_{d,t,\varepsilon}) \in C(\R^d,\R)$ and
	\be\label{eq:main_eps}
	\left(\int_{\R^d} |U(t,x) - \mathcal{R}(\Phi_{d,t,\varepsilon})(x)|^2 \nu(dx) \right)^{\frac 12} \leq \varepsilon.
	\ee
	Moreover, the number of nonzero parameters of the \emph{DNN} $\Phi_{d,t,\varepsilon}$ satisfies the bound
	\be\label{eq:main_param}
	\mathcal{P}(\Phi_{d,t,\varepsilon}) \leq \overline{B} d^{\eta} \varepsilon^{-\eta}, \quad \forall t\in [0,T],
	\ee
	and the number of hidden layers of the \emph{DNN} $\Phi_{d,t,\varepsilon}$ satisfies the bound
	\be\label{eq:main_hidden}
	\mathcal H(\Phi_{d,t,\varepsilon}) \leq (B+2)d^p\delta^{-\beta}, \quad \forall t \in [0,T].
	\ee
\end{theorem}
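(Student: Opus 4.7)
The plan is to combine a standard Monte Carlo approximation of the expectation in \eqref{eq:u_homo} with the DNN approximations of $f$ and $F$ provided by Assumptions \ref{ass1}. Let $(\tau_m, Z_m)_{m=1}^{M}$ be i.i.d.\ copies of $(\tau, Z)$, and set
\[
U_M(t,x) := \frac{1}{M}\sum_{m=1}^{M}\Bigl[\mathbf{1}_{\{\tau_m \geq t\}}\tfrac{t}{\overline\rho(t)} f(x+tZ_m) + \mathbf{1}_{\{\tau_m < t\}}\tfrac{\tau_m}{\rho(\tau_m)} F(t-\tau_m, x+\tau_m Z_m)\Bigr],
\]
together with $\widetilde U_M$, obtained by replacing $f$ and $F$ with $\mathcal R(\Phi_{f,d,\varepsilon'})$ and $\mathcal R(\Phi_{F,d,\varepsilon'})$ for a precision $\varepsilon'>0$ to be chosen. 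A bias--variance split yields
\[
\|U - \widetilde U_M\|_{L^2(\nu\otimes \PP)}^{2} \leq 2\|U - U_M\|_{L^2(\nu\otimes\PP)}^{2} + 2\|U_M - \widetilde U_M\|_{L^2(\nu\otimes\PP)}^{2}.
\]
The first term is controlled by the variance of a single summand: using $|Z|\leq 1$ (Remark \ref{rem:Z}), the growth bounds \eqref{eq:R_f}--\eqref{eq:R_F}, and the moment condition \eqref{eq:nu}, it is bounded by $C_1(B,T,p,q,r)\,d^{2(p+qr)}/M$. The second term is controlled by \eqref{eq:delta_f}--\eqref{eq:delta_F} combined with \eqref{eq:nu}, giving $C_2(B,T,p,q,r)\,d^{2(p+qr)}\,\varepsilon'^{2}$.

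Choosing $M \sim d^{2(p+qr)}\varepsilon^{-2}$ and $\varepsilon' \sim \varepsilon\, d^{-(p+qr)}$ makes each of the two contributions at most $\varepsilon^{2}/2$. A Fubini argument then produces a deterministic realization $(\tau_{m}^{\star}, Z_{m}^{\star})_{m=1}^{M}$ for which the corresponding function $x\mapsto \widetilde U_M(t,x)$ satisfies \eqref{eq:main_eps}. This function is the realization of a DNN $\Phi_{d,t,\varepsilon}$: each $f$--summand is an affine pullback of $\mathcal R(\Phi_{f,d,\varepsilon'})$, which is again a DNN of the same architecture by Lemma \ref{lemma:DNN_affine}; each $F$--summand requires additionally fixing the first component of $\mathcal R(\Phi_{F,d,\varepsilon'})$, which is handled by Corollary \ref{cor:DNN_fixed_comp}; the weighted sum of the $M$ resulting DNNs is then assembled via Lemma \ref{lemma:DNN_suma_distH}, padding hidden layers with the identity (Lemma \ref{lemma:DNN_id}) whenever the $f$-- and $F$--summands differ in depth.

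For the parameter count, Lemmas \ref{lemma:DNN_suma_distH} and \ref{lemma:DNN_comp} give
\[
\mathcal P(\Phi_{d,t,\varepsilon}) \leq C\,M\bigl(\mathcal P(\Phi_{f,d,\varepsilon'}) + \mathcal P(\Phi_{F,d,\varepsilon'}) + \max\{\mathcal H(\Phi_{f,d,\varepsilon'}),\mathcal H(\Phi_{F,d,\varepsilon'})\}\bigr),
\]
and inserting \eqref{eq:cota_param_phiu}, \eqref{eq:dim_dnn_encontradas} with the chosen $M$ and $\varepsilon'$ yields a bound of the form $\overline B\,d^{\eta}\varepsilon^{-\eta}$ for a suitable exponent $\eta = \eta(p,q,r,\alpha,\beta)$, proving \eqref{eq:main_param}. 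For \eqref{eq:main_hidden}, the sum of DNNs has depth equal to the maximum depth of the summands up to an additive constant (Lemma \ref{lemma:DNN_suma_distH}), so the hidden layer bound reduces to inserting $\varepsilon'$ into \eqref{eq:dim_dnn_encontradas}. The main technical obstacle is the careful accounting of how the polynomial growth in \eqref{eq:R_f}--\eqref{eq:delta_F} interacts with the moment condition \eqref{eq:nu} and the boundedness $|Z|\leq 1$ to yield a variance estimate whose $d$--dependence remains polynomial; once this is done, the algebraic assembly of the DNN and the bookkeeping of parameters and hidden layers follow routinely from the lemmas of Section \ref{sec:3}.
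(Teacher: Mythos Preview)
Your proposal is correct and follows essentially the same approach as the paper: a Monte Carlo variance estimate combined with the DNN approximation error on $f$ and $F$, a Fubini argument to extract a deterministic sample, and assembly of the resulting empirical average into a DNN via Lemmas \ref{lemma:DNN_affine}, \ref{lemma:DNN_suma_distH} and Corollary \ref{cor:DNN_fixed_comp}. The only cosmetic difference is the order of the decomposition: the paper first replaces $f,F$ by their DNN approximations inside the expectation (defining an intermediate $v(t,x)$) and then applies Monte Carlo to $v$, whereas you apply Monte Carlo to $U$ first and then replace $f,F$; both routes yield the same parameter choices $M\sim\delta^{-2}$ and $\delta\sim\varepsilon d^{-(p+pqr)}$ and the same final bounds. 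One small caveat: the growth bounds \eqref{eq:R_f}--\eqref{eq:R_F} are stated only for the DNN realizations, so to control the variance of $U-U_M$ (which involves $f,F$ themselves) you need to combine (b) and (c) to get $|f(x)|\leq 2Bd^p(1+|x|)^{pq}$, a harmless extra factor.
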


The rest of this section is devoted to the proof of Theorem \ref{thrm:main_linear}. Let $d=1,2,3$, and $u=u(t,x)$ be defined as in \eqref{eq:u_homo}. Let $\delta \in (0,1]$ and let $\Phi_{f,d,\delta}, \Phi_{F,d,\delta} \in {\bf N}$ satisfy hypotheses \eqref{eq:R_f}-\eqref{eq:dim_dnn_encontradas}. We will divide this proof in several steps for a better structure of the ideas.

\medskip
\noindent
{\bf Step 1.} For any $t \in [0,T]$ and $x \in \R^d$ define
\[
v(t,x) := \E \left[{\bf 1}_{\{\tau \geq t\}}\frac{t}{\overline\rho(t)}\mathcal R(\Phi_{f,d,\delta})(x+tZ) + {\bf 1}_{\{\tau < t\}} \frac{\tau}{\rho(\tau)}\mathcal R(\Phi_{F,d,\delta})(t-\tau,x+\tau Z)\right].
\]
We want to estimate $|U(t,x)-v(t,x)|$. 

\begin{lemma}
One has
\begin{equation}\label{eq:step1-2}
|U(t,x)-v(t,x)| \leq Bd^p(1+|x|+t)^{pq} \delta t \left(1 + \frac t2\right).
\end{equation}
\end{lemma}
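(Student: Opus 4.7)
The plan is to bound $|U(t,x) - v(t,x)|$ by bringing the absolute value inside the expectation, splitting according to the two indicator events, and then invoking the pointwise approximation bounds \eqref{eq:delta_f} and \eqref{eq:delta_F} on the realization errors of the two DNNs.

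First I would write
\[
U(t,x) - v(t,x) = \E\!\left[\mathbf 1_{\{\tau\geq t\}} \tfrac{t}{\overline\rho(t)}\bigl(f(x+tZ) - \mathcal R(\Phi_{f,d,\delta})(x+tZ)\bigr)\right] + \E\!\left[\mathbf 1_{\{\tau<t\}} \tfrac{\tau}{\rho(\tau)}\bigl(F(t-\tau,x+\tau Z)-\mathcal R(\Phi_{F,d,\delta})(t-\tau,x+\tau Z)\bigr)\right],
\]
and estimate each piece separately by triangle inequality. For the first piece, \eqref{eq:delta_f} together with the crucial geometric fact from Remark \ref{rem:Z} that $|Z|\leq 1$ gives $|x+tZ|\leq |x|+t$, hence
\[
|f(x+tZ)-\mathcal R(\Phi_{f,d,\delta})(x+tZ)| \leq \delta B d^p (1+|x|+t)^{pq}.
\]
For the second piece, since $\tau<t$ we have $|x+\tau Z|\leq |x|+\tau \leq |x|+t$ and $t-\tau\leq t$, so $(1+(t-\tau)+|x+\tau Z|)^{pq}\leq (1+|x|+t)^{pq}$, and \eqref{eq:delta_F} gives the same pointwise bound $\delta B d^p (1+|x|+t)^{pq}$.

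Next, I would factor out the deterministic quantity $\delta B d^p (1+|x|+t)^{pq}$ and compute the remaining expectations of the weights against the indicators. For the first term, using \eqref{def_rho}--\eqref{rho_bar},
\[
\E\!\left[\mathbf 1_{\{\tau\geq t\}} \tfrac{t}{\overline\rho(t)}\right] = \tfrac{t}{\overline\rho(t)}\,\PP(\tau\geq t) = \tfrac{t}{\overline\rho(t)}\cdot \overline\rho(t) = t.
\]
For the second term, using $\PP(\tau\in ds) = \rho(s)\,ds$ on $(0,\infty)$,
\[
\E\!\left[\mathbf 1_{\{\tau<t\}} \tfrac{\tau}{\rho(\tau)}\right] = \int_0^t \tfrac{s}{\rho(s)}\rho(s)\,ds = \tfrac{t^2}{2}.
\]
Adding these yields the announced bound
\[
|U(t,x)-v(t,x)| \leq \delta B d^p (1+|x|+t)^{pq}\bigl(t + \tfrac{t^2}{2}\bigr) = \delta B d^p (1+|x|+t)^{pq}\, t\bigl(1+\tfrac{t}{2}\bigr),
\]
which is exactly \eqref{eq:step1-2}.

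There is no real obstacle here; the proof is purely a matter of bookkeeping. The only subtle point is ensuring that the shifted arguments $x+tZ$ and $x+\tau Z$ lie in a set where the polynomial growth bounds \eqref{eq:delta_f}--\eqref{eq:delta_F} can be collapsed into the single envelope $(1+|x|+t)^{pq}$, and this is precisely what Remark \ref{rem:Z} guarantees via $|Z|\leq 1$. It is worth noting that this key deterministic support property of $Z$ is what makes the argument work uniformly in $x\in\R^d$ without integrating against any auxiliary measure at this stage.
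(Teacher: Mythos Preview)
Your proof is correct and follows essentially the same approach as the paper: split the difference into the two indicator events, apply the pointwise DNN error bounds \eqref{eq:delta_f}--\eqref{eq:delta_F}, use $|Z|\leq 1$ from Remark \ref{rem:Z} to collapse the argument into the envelope $(1+|x|+t)^{pq}$, and then evaluate the two elementary expectations $\E[\mathbf 1_{\{\tau\geq t\}}]=\overline\rho(t)$ and $\E[\mathbf 1_{\{\tau<t\}}\tau/\rho(\tau)]=t^2/2$.
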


\begin{proof}
Notice by triangle inequality that
\[
\begin{aligned}
|U(t,x)-v(t,x)| &\leq \E\left[{\bf 1}_{\{\tau \geq t\}} \frac t{\overline \rho(t)} |f(x+tZ) - \mathcal R(\Phi_{f,d,\delta})(x+tZ)| \right] \\
&\hspace{.5cm} + \E\left[{\bf 1}_{\{\tau < t\}} \frac{\tau}{\rho(\tau)}|F(t-\tau,x+\tau Z) - \mathcal R(\Phi_{F,d,\delta})(t-\tau,x+\tau Z)|\right].
\end{aligned}
\]
Inequalities \eqref{eq:delta_f} and \eqref{eq:delta_F} imply that
\[
\begin{aligned}
|U(t,x)-v(t,x)| &\leq \frac{t}{\overline \rho(t)}Bd^p\delta\E\left[{\bf 1}_{\{\tau \geq t\}}(1+|x+tZ|)^{pq}\right] \\
&\hspace{.5cm}+ Bd^p \delta \E\left[{\bf 1}_{\{\tau < t\}}\frac{\tau}{\rho(\tau)}(1+t-\tau+|x+\tau Z|)^{pq}\right].
\end{aligned}
\]
Recall that $Z$ takes values into $B(0,1)$, then $|Z| \leq 1$. This and triangle inequality gives
\begin{equation}\label{eq:step1-1}
|U(t,x)-v(t,x)|\leq Bd^p(1+|x|+t)^{pq}\delta \left(\frac{t}{\overline \rho(t)} \E\left[{\bf 1}_{\{\tau \geq t\}}\right] + \E\left[{\bf 1}_{\{\tau < t\}} \frac{\tau}{\rho(\tau)}\right]\right).
\end{equation}
On the one hand, from the definition of $\tau$ it follows that
\[
\E\left[{\bf 1}_{\{\tau \geq t\}}\right] = \int_{t}^{\infty}\rho(s) ds = \overline \rho(s).
\]
On the other hand, 
\[
\E\left[{\bf 1}_{\{\tau < t\}}\frac{\tau}{\rho(\tau)}\right] = \int_0^t \frac{s}{\rho(s)}\rho(s)ds = \frac{t^2}{2}.
\]
These two computations can be replaced into \eqref{eq:step1-1} to obtain
\[
|U(t,x)-v(t,x)| \leq Bd^p(1+|x|+t)^{pq} \delta t \left(1 + \frac t2\right).
\]
This shows \eqref{eq:step1-2}.
\end{proof}

\medskip
\noindent
{\bf Step 2.} For $M \in \N$ define
\begin{equation}\label{eq:E_M}
\begin{aligned}
&\hbox{E}_{M,\delta}(t,x) \\
& := \frac 1M \sum_{i=1}^{M} \left({\bf 1}_{\{\tau_i \geq t\}}\frac{t}{\overline \rho(t)}\mathcal R(\Phi_{f,d,\delta})(x+tZ_{i}) + {\bf 1}_{\{\tau_i < t\}}\frac{\tau_i}{\rho(\tau_i)}\mathcal R(\Phi_{F,d,\delta})(t-\tau_i,x+\tau_iZ_i)\right),
\end{aligned}
\end{equation}
where $(\tau_{i})_{i=1}^{M}$ and $(Z_i)_{i=1}^{M}$ are i.i.d. copies of $\tau$ and $Z$, respectively. We want to estimate the quantity $\E\left[\left|v(t,x)-\hbox{E}_{M,\delta}(t,x)\right|^2\right]^{\frac 12}$. By properties of the expected value, one has that
\[
\begin{aligned}
&\E\left[\left|v(t,x)-\hbox{E}_{M,\delta}(t,x)\right|^2\right]^{\frac 12} \\
&\hspace{1cm}\leq \frac{1}{\sqrt{M}} \E\left[\left({\bf 1}_{\{\tau \geq t\}}\frac{t}{\overline\rho(t)}\mathcal R(\Phi_{f,d,\delta})(x+tZ) + {\bf 1}_{\{\tau < t\}} \frac{\tau}{\rho(\tau)}\mathcal R(\Phi_{F,d,\delta})(t-\tau,x+\tau Z)\right)^2\right]^{\frac 12}.
\end{aligned}
\]
By the inequality $(a+b)^2 \leq 2a^2 + 2b^2$, in order to bound $\E\left[\left|v(t,x)-\hbox{E}_{M,\delta}(t,x)\right|^2\right]^{\frac 12}$, we need to bound the following quantities
\[
\text{I}:= \E\left[{\bf 1}_{\{\tau\geq t\}}\frac{t^2}{\overline \rho(t)^2} \mathcal R(\Phi_{f,d,\delta})(x+tZ)^2\right],
\]
and
\[
\text{II} := \E\left[{\bf 1}_{\{\tau< t\}}\frac{\tau^2}{\rho(\tau)^2} \mathcal R(\Phi_{F,d,\delta})(t-\tau,x+\tau Z)^2\right].
\]
Using \eqref{eq:R_f}, \eqref{eq:R_F} and similar arguments used in Step 1 we have
\[
\text{I} \leq \frac{t^2}{\overline \rho(t)^2} B^2d^{2p}(1+|x|+t)^{2pq} \E\left[{\bf 1}_{\{\tau \geq t\}}\right], 
\]
and
\[
\text{II} \leq B^2d^{2p}(1+|x|+t)^{2pq} \E\left[{\bf 1}_{\{\tau <t\}} \frac{\tau^2}{\rho(\tau)^2} \right].
\]
For $\text{I}$ we can use that $\E\left[{\bf 1}_{\{\tau \geq t\}}\right] = \overline \rho(t)$. For $\text{II}$ notice that
\[
\E\left[{\bf 1}_{\{\tau < t\}}\frac{\tau^2}{\rho(\tau)^2}\right] = \int_0^t \frac{s^2}{\rho(s)} ds.
\]
Recall that $\rho$ is a continuous function on $[0,\infty)$ and in particular, continuous on the compact set $[0,t]$. Therefore $\rho(s)$ achieves it maximum and minimum on $[0,t]$. Define
\[
\underline \rho(t) := \min_{s\in[0,t]} \rho(s).
\]
Then
\[
\text{II} \leq B^2d^{2p}(1+|x|+t)^{2pq} \frac{t^3}{3\underline \rho(t)}.
\]
From $\text{I}$ and $\text{II}$ we conclude that
\[
\begin{aligned}
\E\left[\left|v(t,x)-\hbox{E}_{M,\delta}(t,x)\right|^2\right]^{\frac 12} &\leq \frac{\sqrt 2}{\sqrt M} (\text{I} + \text{II})^{\frac 12}\\
&\leq \frac{\sqrt 2}{\sqrt M} B d^p (1+|x|+t)^{pq} t \left( \frac{1}{\overline \rho(t)} + \frac{t}{3\underline \rho(t)} \right)^{\frac 12}.
\end{aligned}
\]

\medskip
\noindent
{\bf Step 3.} Steps 1 and 2, and triangle inequality allow us to bound $\E\left[\left| U(t,x)-\hbox{E}_M(t,x)\right|^2\right]^{\frac 12}$. Indeed,
\[
\begin{aligned}
\E \left[\left| U(t,x)-\hbox{E}_{M,\delta}(t,x)\right|^2 \right]^{\frac 12} &\leq |u(t,x) - v(t,x)| + \E \left[\left|v(t,x)-\hbox{E}_{M,\delta}(t,x)\right|^2 \right]^{\frac 12}\\
&\leq Bd^p(1+|x|+t)^{pq} \delta t \left(1 + \frac t2\right) \\
&\hspace{.5cm}+ \frac{\sqrt 2}{\sqrt M} B d^p (1+|x|+t)^{pq} t \left( \frac{1}{\overline \rho(t)} + \frac{t}{3\underline \rho(t)} \right)^{\frac 12}.
\end{aligned}
\]
By integrating over $\R^d$ with the measure $\nu$ that satisfies \eqref{eq:nu} and by Fubini we have that
\[
\begin{aligned}
\E\left[\int_{\R^d}\left| U(t,x)-\hbox{E}_{M,\delta}(t,x)\right|^2 \nu(dx)\right]&\leq  2B^2d^{2p}\delta^2 t^2\left(1+\frac t2\right)^2 \int_{\R^d} (1+|x|+t)^{2pq} \nu(dx)\\
&\hspace{.1cm}+ B^2d^{2p}\frac{4}{M} t^2 \left( \frac{1}{\overline \rho(t)} + \frac{t}{3\underline \rho(t)}\right)\int_{\R^d} (1+|x|+t)^{2pq} \nu(dx)\\
&=:{\bf error}_u^2.
\end{aligned}
\]
To bound ${\bf error}_u^2$ notice from Minkowski's inequality and \eqref{eq:nu} that
\[
\begin{aligned}
\int_{\R^d} (1+|x|+t)^{2pq} \nu(dx)&\leq \left(1 + \left(\int_{\R^d} |x|^{2pq}\nu(dx)\right)^{\frac{1}{2pq}} + t\right)^{2pq}\\
&\leq (1+Bd^r + t)^{2pq}.
\end{aligned}
\]
Then
\[
{\bf error}_u^2 \leq B^2d^{2p}(1+Bd^r+t)^{2pq}t\left(2\delta^2\left(1+\frac t2\right)^2 + \frac{4}{M}\left( \frac{1}{\overline \rho(t)} + \frac{t}{3\underline \rho(t)}\right)\right).
\]
Choosing $M := \lceil \delta^{-2}\rceil$, the smallest integer larger than $\delta^{-2}$, we have that
\[
{\bf error}_u^2 \leq \delta^2 d^{2(p+rpq)} B^2(1+B+t)^{2pq}t\left(2\left(1+\frac t2\right)^2 + 4\left( \frac{1}{\overline \rho(t)} + \frac{t}{3\underline \rho(t)}\right)\right).
\]
For simplicity, we denote $C$ as the constant: 
\[
C^2 := C(T,B,p,q)^2 = \sup_{t \in [0,T]} \left[B^2(1+B+t)^{2pq}t\left(2\left(1+\frac t2\right) + 4\left( \frac{1}{\overline \rho(t)} + \frac{t}{3\underline \rho(t)}\right)\right)\right].
\]
The bound on ${\bf error}_u^2$ implies that
\[
\E\left[\int_{\R^d}\left| U(t,x)-\hbox{E}_{M,\delta}(t,x)\right|^2 \nu(dx)\right] \leq \delta^2 d^{2(p+pqr)} C^2.
\]
Then there exist values $(\tau_i)_{i=1}^{M} \subset \R$ and vectors $(Z_i)_{i=1}^{M} \subset B(0,1)$ such that
\[
\left(\int_{\R^d}\left| U(t,x)-\overline{\hbox{E}}_{M,\delta}(t,x)\right|^2 \nu(dx)\right)^{\frac 12} \leq \delta d^{p+pqr} C.
\]
Where $\overline{\hbox{E}}_{M,\delta}(t,x)$ is defined as \eqref{eq:E_M}, with the fixed values $(\tau_i)_{i=1}^{M}$, vectors $(Z_i)_{i=1}^{M}$, and the properly choice of $M$. Fixed $\varepsilon \in (0,1]$, let
\[
\delta = \frac{\varepsilon}{d^{p+pqr}C}.
\]
We conclude that
\begin{equation}\label{eq:step3-1}
\left(\int_{\R^d}\left| U(t,x)-\overline{\hbox{E}}_{M,\delta}(t,x)\right|^2 \nu(dx)\right)^{\frac 12} \leq \varepsilon.
\end{equation}

\medskip
\noindent
{\bf Step 4. Counting DNN sizes.} To conclude the proof, we will show that $\overline{\hbox{E}}_{M,\delta}(t,x)$, with the choices of $M$ and $\delta$ defines for any $t \in [0,T]$ and $\varepsilon \in (0,1]$ a DNN $\Psi_{t,d,\varepsilon}$ such that for all $x \in \R^d$
\[
\mathcal R(\Psi_{t,d,\varepsilon})(x) = \overline{\hbox{E}}_{M,\delta}(t,x).
\]
First, notice by Lemma \ref{lemma:DNN_affine} that for all $i = 1,\ldots,M$,
\begin{equation}\label{eq:step4-1}
{\bf 1}_{\{\tau_i \geq t\}}\frac{t}{\overline \rho(t)}\mathcal R(\Phi_{f,d,\delta})(\cdot+tZ_{i}) \in \mathcal R\left(\left\{\Phi \in {\bf N}: \mathcal D(\Phi) = \mathcal D(\Phi_{f,d,\delta})\right\}\right).
\end{equation}
On the other hand side, for any $t \in \R_+$, $i =1,\ldots,M$ we have by Lemmas \ref{lemma:DNN_ext_t} and \ref{lemma:DNN_affine} that
\begin{equation}\label{eq:step4-2}
\begin{aligned}
& {\bf 1}_{\{\tau_i < t\}}\frac{\tau_i}{\rho(\tau_i)}\mathcal R(\Phi_{F,d,\delta})(t-\tau_i,\cdot+\tau_iZ_i) \\
& \quad \in \mathcal R\Big(\Big\{ \Phi \in {\bf N}: \mathcal D(\Phi) = \mathcal D(\Phi_{F,d,\delta})\odot (d,2d+1,d+1)\Big\}\Big).
\end{aligned}
\end{equation}
For $i=1,\ldots,M$ we denote by $\Psi_{1,i}:=\Psi^{1,i}_{d,t,\delta}$ and $\Psi_{2,i}:= \Psi_{d,t,\delta}^{2,i}$ the DNNs with realizations as in \eqref{eq:step4-1} and \eqref{eq:step4-2}, respectively. It is not direct that we can sum $\Psi_{1,i}$ and $\Psi_{2,i}$, because
\[
\mathcal H(\Psi_{1,i}) = \mathcal H(\Phi_{f,d,\delta}), \hspace{1cm} \hbox{and} \hspace{1cm} \mathcal H(\Psi_{2,i}) = \mathcal H(\Phi_{F,d,\delta}) + 2,
\]
and therefore $\Psi_{1,i}$ and $\Psi_{2,i}$ may not have the same length. We then consider the three possible cases:
\begin{itemize}
\item $\mathcal H(\Psi_{1,i}) = \mathcal H(\Psi_{2,i})$: There is no problem in sum both DNNs. Define $\Psi_i := \Psi^i_{d,t,\delta}$ from its realization
\[
\mathcal R(\Psi_i) \equiv \mathcal R(\Psi_{1,i}) + \mathcal R(\Psi_{2,i}).
\]
By Lemma \ref{lemma:DNN_suma} we have that $\mathcal D(\Psi_{i}) = \mathcal D(\Psi_{1,i}) \boxplus \mathcal D(\Psi_{2,i})$ and
\[
\begin{aligned}
\mathcal P(\Psi_i) \leq &~{} \mathcal P(\Psi_{1,i}) + \mathcal P(\Psi_{2,i}) \\
\leq &~{} \mathcal P(\Phi_{f,d,\delta}) + 2\mathcal P(\Phi_{F,d,\delta}) + 4(2d+1).
\end{aligned}
\]
\item $\mathcal H(\Psi_{1,i}) < \mathcal H(\Psi_{2,i})$: We need to extend $\Psi_{1,i}$ to have $H(\Psi_{2,i})$ hidden layers. Lemma \ref{lem:DNN_extension} implies that
\[
\mathcal R\big( \Sigma_{\mathcal H(\Psi_{2,i})-\mathcal H(\Psi_{1,i})-1} \big) \circ \mathcal R(\Psi_{1,i}) \in \mathcal R \Big( \Big\{\Phi \in{\bf N}: \mathcal D(\Phi) = \mathfrak n_{\mathcal H(\Psi_{2,i})-\mathcal H(\Psi_{1,i})-1}\odot\mathcal D(\Psi_{1,i})\Big\} \Big).
\]
Define $\overline \Psi_{1,i}$ the DNN with previous realization. This DNN satisfy $\mathcal H(\overline \Psi_{1,i})$ and therefore we can use Lemma \ref{lemma:DNN_suma} to obtain
\[
\mathcal R(\overline \Psi_{1,i}) + \mathcal R(\Psi_{2,i}) \in \mathcal R\big( \{\Phi \in{\bf N}: \mathcal D(\Phi) = \mathcal D(\overline \Psi_{1,i}) \boxplus \mathcal D(\Psi_{2,i})\} \big).
\]
Define $\Psi_i:= \Psi^i_{d,t,\delta}$ from its realization
\[
\mathcal R(\Psi_i) \equiv \mathcal R(\overline \Psi_{1,i}) + \mathcal R(\Psi_{2,i}) \equiv \mathcal R(\Psi_{1,i}) + \mathcal R(\Psi_{2,i}).
\]
Then 
\[
\begin{aligned}
\mathcal P(\Psi_i) &\leq \mathcal P(\overline \Psi_{1,i}) + \mathcal P(\Psi_{2,i})\\
& \leq 2\mathcal P(\Phi_{f,d,\delta}) + 4\left(\mathcal H(\Psi_{2,i})-\mathcal H(\Psi_{1,i})\right) + 2\mathcal P(\Phi_{F,d,\delta}) + 4(2d+1).
\end{aligned}
\]
\item $\mathcal H(\Psi_{1,i}) > \mathcal H(\Psi_{2,i})$: Similar to previous case. We extend $\Psi_{2,i}$ to have $H(\Psi_{1,i})$ hidden layers. Lemma \ref{lem:DNN_extension} implies that
\[
\mathcal R(\Sigma_{\mathcal H(\Psi_{1,i})-\mathcal H(\Psi_{2,i})-1}) \circ \mathcal R(\Psi_{2,i}) \in \mathcal R \Big( \Big\{\Phi \in{\bf N}: \mathcal D(\Phi) = \mathfrak n_{\mathcal H(\Psi_{1,i})-\mathcal H(\Psi_{2,i})-1}\odot\mathcal D(\Psi_{2,i})\Big\} \Big).
\]
Define $\overline \Psi_{2,i}$ the DNN with previous realization. This DNN satisfy $\mathcal H(\overline \Psi_{2,i})$ and therefore we can use Lemma \ref{lemma:DNN_suma} to obtain
\[
\mathcal R(\Psi_{1,i}) + \mathcal R(\overline \Psi_{2,i}) \in \mathcal R \Big( \Big\{\Phi \in{\bf N}: \mathcal D(\Phi) = \mathcal D(\Psi_{1,i}) \boxplus \mathcal D(\overline \Psi_{2,i})\Big\} \Big).
\]
Define $\Psi_i:= \Psi^i_{d,t,\delta}$ from its realization
\[
\mathcal R(\Psi_i) \equiv \mathcal R(\Psi_{1,i}) + \mathcal R(\overline \Psi_{2,i}) \equiv \mathcal R(\Psi_{1,i}) + \mathcal R(\Psi_{2,i}).
\]
Then 
\[
\begin{aligned}
\mathcal P(\Psi_i) &\leq \mathcal P(\Psi_{1,i}) + \mathcal P(\overline \Psi_{2,i})\\
& \leq \mathcal P(\Phi_{f,d,\delta}) + 4\mathcal P(\Phi_{F,d,\delta}) + 8(2d+1) + 4\left(\mathcal H(\Psi_{1,i})-\mathcal H(\Psi_{2,i})\right) .
\end{aligned}
\]
\end{itemize}

\medskip

In the three cases we have for all $i=1,\ldots,M$ a DNN $\Psi_i:=\Psi_{d,t,\delta}^i$ such that
\[
\mathcal R(\Psi_{i}) \equiv \mathcal R(\Psi_{1,i}) + \mathcal R(\Psi_{2,i}),
\]
and 
\[
\mathcal P(\Psi_i) \leq 2\mathcal P(\Phi_{f,d,\delta}) + 4\mathcal P(\Phi_{F,d,\delta}) + 4\max\{\mathcal H(\Psi_{1,i}),\mathcal H(\Psi_{2,i})\} + 8(2d+1).
\]
Notice also that for all $i=1,\ldots,M$,
\[
\mathcal H(\Psi_i) = \max\{\mathcal H(\Psi_{1,i}),\mathcal H(\Psi_{2,i})\}.
\]
It follows from Lemma \ref{lemma:DNN_suma} that
\[
\frac 1M \sum_{i=1}^{M} \mathcal R(\Psi_i) \in \mathcal R\left(\left\{\Phi \in {\bf N}: \mathcal D(\Phi) = \underset{i=1}{\overset{M}{\boxplus}} \mathcal D(\Psi_i)\right\}\right).
\]
Finally, for all $t \in [0,T]$, $\varepsilon \in (0,1]$, $d=1,2,3$ there exists $\Psi_{t,d,\varepsilon} \in {\bf N}$ such that
\[
\mathcal R(\Psi_{t,d,\varepsilon}) \equiv \frac 1M \sum_{i=1}^{M} \mathcal R(\Psi_{i}) \equiv \overline{\hbox{E}}_{M,\delta}(t,\cdot),
\] 
with the choices of $M$ and $\delta$ obtained in step 3. This and \eqref{eq:step3-1} proves \eqref{eq:main_eps}. Moreover
\[
\begin{aligned}
\mathcal P(\Psi_{t,d,\varepsilon}) \leq &~{} \sum_{i=1}^{M} \mathcal P(\Psi_i) \\
\leq &~{}  M\left(2\mathcal P(\Phi_{f,d,\delta}) + 4\mathcal P(\Phi_{F,d,\delta}) + 4\max\{\mathcal H(\Psi_{1,i}),\mathcal H(\Psi_{2,i})\} + 8(2d+1)
\right).
\end{aligned}
\]
By inequalities \eqref{eq:cota_param_phiu} and \eqref{eq:dim_dnn_encontradas} we have that
\[
\begin{aligned}
\mathcal P(\Psi_{t,d,\varepsilon}) &\leq M\left(6Bd^p\delta^{-\alpha} + 4(Bd^p\delta^{-\beta}+2) + 8(2d+1)\right)\\
&\leq M\delta^{-\gamma}d^p(6B + 4(B+2) + 24),
\end{aligned}
\]
with $\gamma := \max\{\alpha,\beta\}$. Recall the choices of $M$ and $\delta$:
\[
M = \lceil \delta^{-2} \rceil \leq 2\delta^{-2}, \hspace{1cm} \hbox{and} \hspace{1cm} \delta = \frac{\varepsilon}{Cd^{p+pqr}}.
\]
It follows from replacing these values on $\mathcal P(\Psi_{t,d,\varepsilon})$ that
\[
\mathcal P(\Psi_{t,d,\varepsilon}) \leq \varepsilon^{-2-\gamma}d^{(2+\gamma)(p+pqr)+p} 2C^{\gamma}(10B + 32).
\]
Finally, choosing 
\[
\overline B := 4C^{\gamma}(5B+16), \hspace{1cm} \hbox{and} \hspace{1cm} \eta := (2+\gamma)(p+pqr) + p,
\]
we conclude that 
\[
\mathcal P(\Psi_{t,d,\varepsilon}) \leq \overline B d^{\eta} \varepsilon^{-\eta}.
\]
This proves \eqref{eq:main_param}. Finally, \eqref{eq:main_hidden} is obtained.

\newpage

\section{Linear perturbation of waves}\label{sec:5}

Now we consider the general case in linear and nonlinear waves, Theorem \ref{MT1}. Theorem \ref{thrm:main_linear} can be recast as a preparatory result in the most general situation considered in Theorem \ref{MT1}, hinting the right direction to follow, but not directly helping to consider the nonlinear situation, because of not completely related stochastic representations.  

\medskip

Recall $F$ given in \eqref{eq:source-nonlinear}, and the branching mechanism introduced in Definition \ref{BM}.  Notice that according to \eqref{def_rho}, we have a gauge freedom in the choice of $\rho$. Along this section we will consider $\rho(t) = \lambda e^{-\lambda t}$.  We consider the first case in Theorem \ref{MT1}, where 
\[
F(t,x,u) = c(t,x)u(t,x),
\]
with $c: \R_+ \times \R^d \to \R$ is a nontrivial bounded continuous function. This source term represents the situation where \emph{every particle in the stochastic representation branches in exactly one offspring particle}. Moreover, the set $K_t$ will have one particle and $\overline K_t \setminus K_t$ will have $N$ particles, with 
\begin{equation}\label{eq:N_nl}
N = \min\{n \in \N: T_n^t = t\} \geq 0.
\end{equation}

\subsection{Preliminaries}

We are no longer working with the representation for the solution given in \eqref{eq:u_homo}. Instead, as mentioned in Remark \ref{rem:special_cases}, the solution $u$ from Theorem \ref{prop:sol-nonlinear} becomes
\begin{equation}\label{eq:sol_NL1}
U(t,x) = \E\left[ {\bf 1}_{\{T_N^t\geq t\}}\frac{\Delta T_N^t}{\overline \rho (\Delta T_N^t)} f\left(X_{T_N^t}^N\right) \prod_{k=0}^{N-1}{\bf 1}_{\{T_k^t < t\}} \frac{\Delta T_k^t}{\rho (\Delta T_k^t)}c\left(t-T_k^t,X_{T_k^t}^k\right) \right].
\end{equation}
The characteristic functions ${\bf 1}_{\{T_N^t \geq t\}}$ and ${\bf 1}_{\{T_n^t < t\}}$ (with $n \in \{0,\ldots,N-1\}$) do not give additional information, but they will be useful when we are going to consider $N$ as a fixed value, instead of the random variable defined as in \eqref{eq:N_nl}. In addition, as we said in Remark \ref{rem:special_cases}, the particles $k$ can be indexed over the natural numbers, instead of $\N$-valued vectors, then $k \in \{0,\ldots,N\}$ will be the particles of the branching mechanism.

\medskip

Given this simplest indexation, notice that for any particle $k \in \{1,\ldots,N\}$, its parent particle $k_-$ is just $k-1 \in \{0,\ldots,N-1\}$. In addition, one can simplify the following elements of the branching process:
\begin{enumerate}
\item As in Remark \ref{rem:Z}, $Z_{T_k^t}^k$ defined in \eqref{eq:DTk} can be seen as the multiplication of $\Delta T_k^t$ and the random variable $Z$ from \ref{rem:Z}.
\item Recall the random variable $X_k^t := X_{T_k^t}^k$ from \eqref{eq:XTk}. From this definition, $X_k^t$ is defined recursively as the sum of $Z_{T_k^t}^k$ and the position of its parent particle, $X_{k_-}^t$. Inductively, and given that the particle $0$ starts at position $x$, this recursion yields to
\[
X_{k}^t = x + \sum_{i=0}^{k} Z_{T_i^t}^i.
\]
From the form of $Z_{T_k^t}^k$ mentioned in previous item, we have that
\[
X_k^t = x + \sum_{i=0}^k \Delta T_i^t Z_i,
\]
where $Z_i$ are i.i.d. copies of $Z$ defined in Remark \ref{rem:Z}. Then, for all $n \in \{0,\ldots,N\}$ one can obtain the following bound for $X_n^t$:
\end{enumerate}

\begin{remark}\label{rem:cota_Xnt}
For any $k \in \{0,\ldots,N\}$ we have $|X_k^t| \leq |x|+T_k^t$. Indeed, from \eqref{eq:XTk}, and the fact that  $X_k^t = x+\sum_{i=0}^{k} \Delta T_i^t Z_i$, one gets 
\[
|X_k^t| \leq |x|+\sum_{i=0}^{k} \Delta T_i^t |Z_i| \leq |x|+\sum_{i=0}^{k} \Delta T_i^t = |x|+T_k^t.
\]
Here, we have used the fact that $\Delta T_i^t\geq 0$, and $|Z_i|\leq 1$ for dimensions $d=1,2,3.$
\end{remark}

In the proof of Theorem \ref{thrm:main_linear}, involving the classical linear wave, we saw that it was necessary to estimate terms involving the random variable $\tau$ in the stochastic representation of the solution $U$. The current case has some similarities; in the sense that we will look for estimates for the term containing the random times $T_k^t$ in the solution \eqref{eq:sol_NL1}. Recall that we are assuming $\rho(t) = \lambda e^{-\lambda t}$.

%

\begin{lemma}[Computation of expectations, linear perturbative case]\label{lemma:Expectation_NL} For every $n \in \N$, one has
\begin{enumerate} 
\item Polynomial expectation:
\be\label{eq:prod_Tn}
 \E\left[{\bf 1}_{\{T_n^t\geq t\}}\frac{\Delta T_n^t}{\overline\rho(\Delta T_n^t)}\prod_{k=0}^{n-1}{\bf 1}_{\{T_k^t<t\}}\frac{\Delta T_k^t}{\rho(\Delta T_k^t)}\right] = \frac{t^{2n+1}}{(2n+1)!},
\ee
\item Exponential second moment:
\be\label{eq:prod_Tn2}
 \E\left[\left({\bf 1}_{\{T_n^t\geq t\}}\frac{\Delta T_n^t}{\overline\rho(\Delta T_n^t)}\prod_{k=0}^{n-1}{\bf 1}_{\{T_k^t<t\}}\frac{\Delta T_k^t}{\rho(\Delta T_k^t)}\right)^2\right] = \frac{2^{n+1}e^{\lambda t}}{\lambda^n}\frac{t^{3n+2}}{(3n+2)!}.
\ee
\end{enumerate}
\end{lemma}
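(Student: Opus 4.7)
The plan is to reduce both identities to classical Dirichlet integrals on the simplex. The key observation is that the product of indicators ${\bf 1}_{\{T_n^t\geq t\}}\prod_{k=0}^{n-1}{\bf 1}_{\{T_k^t<t\}}$ is precisely the indicator of the event $A_n:=\{\tau_0+\cdots+\tau_{n-1}<t\leq \tau_0+\cdots+\tau_n\}$, since $T_k^t=(T_{k-1}^t+\tau_k)\wedge t$. Moreover, on $A_n$ one has $\Delta T_k^t=\tau_k$ for $k=0,\ldots,n-1$, and $\Delta T_n^t=t-(\tau_0+\cdots+\tau_{n-1})$. Since the $\tau_k$ are i.i.d.\ with density $\rho$, each expectation can be written as an explicit iterated integral over $(s_0,\ldots,s_n)\in\R_+^{n+1}$ against $\prod_k \rho(s_k)\,ds_k$, restricted to $A_n$.

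For identity \eqref{eq:prod_Tn}, I would first integrate out $s_n$ from $t-\sum_{j<n}s_j$ to $\infty$; the resulting $\overline{\rho}\bigl(t-\sum_{j<n}s_j\bigr)$ cancels exactly against the $\overline{\rho}(\Delta T_n^t)$ in the denominator. Simultaneously, for each $k<n$, the joint-density factor $\rho(s_k)$ cancels against the $\rho(\Delta T_k^t)=\rho(s_k)$ in the denominator. What survives is the classical Dirichlet integral
\[
\int_{\substack{s_0,\ldots,s_{n-1}>0\\ s_0+\cdots+s_{n-1}<t}} \Bigl(t-\sum_{k<n}s_k\Bigr) \prod_{k=0}^{n-1} s_k\, ds_0\cdots ds_{n-1} \;=\; t^{2n+1}\,\frac{\prod_{k=0}^{n}\Gamma(2)}{\Gamma(2n+2)} \;=\; \frac{t^{2n+1}}{(2n+1)!},
\]
yielding \eqref{eq:prod_Tn}. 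Notice that at this step the specific form of $\rho$ is irrelevant; only the gauge cancellation matters.

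For identity \eqref{eq:prod_Tn2}, I would use that the indicators are $\{0,1\}$-valued, so squaring them is harmless. The squared denominators $\rho(\tau_k)^2$ and $\overline{\rho}(\Delta T_n^t)^2$ now only partially cancel against a single copy of the joint density, leaving a residual exponential factor at each coordinate. With $\rho(s)=\lambda e^{-\lambda s}$ and $\overline{\rho}(s)=e^{-\lambda s}$, this residual factor is $s_k^2 e^{\lambda s_k}/\lambda$ at each $k<n$ and $u^2 e^{\lambda u}$ at $k=n$ (where $u=t-\sum_{j<n}s_j$, after integrating $s_n\in[u,\infty)$). Here the essential miracle is that the exponentials telescope:
\[
e^{\lambda u}\prod_{k=0}^{n-1}e^{\lambda s_k} \;=\; e^{\lambda\bigl(u+s_0+\cdots+s_{n-1}\bigr)} \;=\; e^{\lambda t},
\]
a constant that pulls out of the integral together with $\lambda^{-n}$. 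The remaining Dirichlet integral
\[
\int_{\substack{s_0,\ldots,s_{n-1}>0\\ s_0+\cdots+s_{n-1}<t}} \Bigl(t-\sum_{k<n}s_k\Bigr)^2 \prod_{k=0}^{n-1} s_k^2\, ds \;=\; t^{3n+2}\,\frac{\prod_{k=0}^{n}\Gamma(3)}{\Gamma(3n+3)} \;=\; \frac{2^{n+1}\,t^{3n+2}}{(3n+2)!},
\]
gives the claimed formula. The main (minor) obstacle is the bookkeeping of the exponential gauge weights in the second identity; once one recognizes the telescoping, both computations are mechanical, requiring no induction.
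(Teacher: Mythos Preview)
Your proposal is correct and takes a genuinely different route from the paper. The paper proceeds by induction on $n$: it defines $\mathcal I_n(t)$ and $\mathcal J_n(t)$, verifies the base case $n=1$ by hand, and for the inductive step conditions on $\tau_0=s$, recognizes the inner expectation as $\mathcal I_n(t-s)$ (respectively $\mathcal J_n(t-s)$), and then computes a one-dimensional convolution integral against $(t-s)$ (respectively $(t-s)^2/\rho(t-s)$). Your approach instead unfolds the whole iterated integral at once, observes the gauge cancellations, and evaluates the resulting simplex integral in a single stroke via Liouville's Dirichlet formula $\int_{\Delta_n}\prod x_i^{\alpha_i-1}\,dx = \prod\Gamma(\alpha_i)/\Gamma(\sum\alpha_i)$ with $\alpha_i\equiv 2$ for \eqref{eq:prod_Tn} and $\alpha_i\equiv 3$ for \eqref{eq:prod_Tn2}. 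This is shorter and more conceptual; in particular, your remark that \eqref{eq:prod_Tn} is insensitive to the choice of $\rho$ while \eqref{eq:prod_Tn2} relies essentially on the exponential telescoping $e^{\lambda u}\prod_k e^{\lambda s_k}=e^{\lambda t}$ is a clarifying structural observation that the inductive proof obscures. The induction, on the other hand, is self-contained and does not invoke the Dirichlet formula as a black box, which may be preferable for readers unfamiliar with it.
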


\begin{proof} We will prove \eqref{eq:prod_Tn} and \eqref{eq:prod_Tn2} by induction on $n \in \N$. 
\begin{enumerate}
\item Proof of \eqref{eq:prod_Tn}. Define
\[
\mathcal I_n(t) = \E\left[{\bf 1}_{\{T_n^t\geq t\}}\frac{\Delta T_n^t}{\overline\rho(\Delta T_n^t)}\prod_{k=0}^{n-1}{\bf 1}_{\{T_k^t<t\}}\frac{\Delta T_k^t}{\rho(\Delta T_k^t)}\right].
\]
If $n=1$, $\mathcal I_n(t)$ is calculated as
\[
\begin{aligned}
\mathcal I_1(t) = &~{}  \E\left[{\bf 1}_{\{\tau_1 + \tau_0 \geq t\}} \frac{t-\tau_0}{\overline \rho(t-\tau_0)}{\bf 1}_{\{\tau_0 < t\}} \frac{\tau_0}{\rho(\tau_0)}\right] = \int_{0}^{t} \int_{t-s}^{\infty} \frac{t-s}{\overline \rho(t-s)} \frac{s}{\rho(s)} \rho(s) \rho(r)dr ds.
\end{aligned}
\]
Notice that from \eqref{rho_bar},
\[
\int_{t-s}^{\infty} \rho(r)dr = \overline \rho(t-s).
\]
Then
\[
\mathcal I_1(t) = \int_0^t (t-s)s ds = \left(\frac {t^3}2 - \frac {t^3} 3\right) = \frac {t^3}6.
\]
As $3! = 6$, equation \eqref{eq:prod_Tn} is valid for $n=1$. Now suppose that for $n \in \N$, the equality \eqref{eq:prod_Tn} holds. For $n+1$ one has
\[
\begin{aligned}
& \mathcal I_{n+1}(t) \\
&= \E \left[ {\bf 1}_{\left\{\sum_{i=0}^{n+1} \tau_i \geq t \right\}}\frac{t-\sum_{i=0}^{n} \tau_i}{\overline \rho\left(t-\sum_{i=0}^{n} \tau_i\right)} \prod_{k=0}^{n} {\bf 1}_{\left\{\sum_{i=0}^{k}\tau_i < t\right\}}
 \frac{\tau_i}{\rho(\tau_i)}\right]\\
 &=\displaystyle\int_0^t \E \left[ {\bf 1}_{\left\{\sum_{i=1}^{n+1} \tau_i \geq t-s \right\}}\frac{t-s-\sum_{i=1}^{n} \tau_i}{\overline \rho\left(t-s-\sum_{i=1}^{n} \tau_i\right)} \prod_{k=1}^{n} {\bf 1}_{\left\{\sum_{i=1}^{k}\tau_i < t-s\right\}}
 \frac{\tau_i}{\rho(\tau_i)}\right]\frac{s}{\rho(s)}\rho(s)ds.
\end{aligned} 
\]
By the fact that $(\tau_i)_{i=0}^{n}$ are i.i.d. random variables, it follows that the expectation inside the integral is equal to $\mathcal I_n(t-s)$. By a change of variables $r = t-s$ and the inductive hypothesis, one has that
\[
\mathcal I_{n+1}(t) = \int_0^t \mathcal I_n(r) (t-r)dr = \frac 1{(2n+1)!}\int_0^t r^{2n+1}(t-r)dr.
\]
It follows from a simple integral calculation that
\[
\mathcal I_{n+1}(t) = \frac{1}{(2n+1)!} \left(\frac{t^{2n+3}}{2n+2} - \frac{t^{2n+3}}{2n+3}\right) = \frac{t^{(2n+1)+1}}{(2(n+1)+1)!}.
\]
This proves \eqref{eq:prod_Tn}.

\item Proof of \eqref{eq:prod_Tn2}. Define
\begin{equation}\label{def:J_n}
\mathcal J_n(t) = \E\left[{\bf 1}_{\{T_n^t\geq t\}}\frac{(\Delta T_n^t)^2}{\overline\rho^2(\Delta T_n^t)}\prod_{k=0}^{n-1}{\bf 1}_{\{T_k^t<t\}}\frac{(\Delta T_k^t)^2}{\rho^2(\Delta T_k^t)}\right].
\end{equation}
If $n=1$, $\mathcal J_n(t)$ is calculated as
\[
\begin{aligned}
\mathcal J_1(t) = &~{}  \E\left[{\bf 1}_{\{\tau_1 + \tau_0 \geq t\}} \frac{(t-\tau_0)^2}{\overline \rho^2(t-\tau_0)}{\bf 1}_{\{\tau_0 < t\}} \frac{\tau_0^2}{\rho^2(\tau_0)}\right] = \int_{0}^{t} \int_{t-s}^{\infty} \frac{(t-s)^2}{\overline \rho^2(t-s)} \frac{s^2}{\rho^2(s)} \rho(s) \rho(r)dr ds.
\end{aligned}
\]
Notice that
\[
\int_{t-s}^{\infty} \rho(r)dr = \overline \rho(t-s).
\]
On the other hand side, it follows from the assumption $\rho(t) = \lambda e^{-\lambda t}$ that
\[
\overline \rho(t-s) \rho(s) = e^{-\lambda(t-s)} \lambda e^{-\lambda s} = \lambda e^{-\lambda t}.
\]
Then
\[
\mathcal J_1(t) = \frac{e^{\lambda t}}{\lambda}\int_0^t (t-s)^2s^2 ds = \frac{e^{\lambda t}}{\lambda}\left(\frac {t^5}3 - \frac {2t^5}4 + \frac{t^5} 5\right) = \frac{2e^{\lambda t}}{\lambda}\frac {t^5}{3\cdot 4\cdot 5}.
\]
As $3 \cdot 4 \cdot 5 = \frac{5!}{2}$, equation \eqref{eq:prod_Tn2} is valid for $n=1$. Now suppose that for $n \in \N$, the equality \eqref{eq:prod_Tn2} holds. For $n+1$ one has
\[
\begin{aligned}
& \mathcal J_{n+1}(t) \\
&\quad = \E \left[ {\bf 1}_{\left\{\sum_{i=0}^{n+1} \tau_i \geq t \right\}}\frac{\left(t-\sum_{i=0}^{n} \tau_i\right)^2}{\overline \rho^2\left(t-\sum_{i=0}^{n} \tau_i\right)} \prod_{k=0}^{n} {\bf 1}_{\left\{\sum_{i=0}^{k}\tau_i < t\right\}}
 \frac{\tau_i^2}{\rho^2(\tau_i)}\right]\\
 &\quad  =\displaystyle\int_0^t \E \left[ {\bf 1}_{\left\{\sum_{i=1}^{n+1} \tau_i \geq t-s \right\}}\frac{\left(t-s-\sum_{i=1}^{n} \tau_i\right)^2}{\overline \rho^2\left(t-s-\sum_{i=1}^{n} \tau_i\right)} \prod_{k=1}^{n} {\bf 1}_{\left\{\sum_{i=1}^{k}\tau_i < t-s\right\}}
 \frac{\tau_i^2}{\rho^2(\tau_i)}\right]\frac{s^2}{\rho^2(s)}\rho(s)ds.
\end{aligned} 
\]
By the fact that $(\tau_i)_{i=0}^{n}$ are i.i.d. random variables, and \eqref{eq:DTk}, it follows 
 from \eqref{def:J_n} the expectation inside the last integral is equal to $\mathcal J_n(t-s)$. By a change of variables $r = t-s$ and the inductive hypothesis, one has that
\[
\begin{aligned}
\mathcal J_{n+1}(t) =&~{} \int_0^t \mathcal J_n(r) \frac{(t-r)^2}{\rho(t-r)}dr \\
=&~{} \frac{2^{n+1}}{\lambda^n}\frac 1{(3n+2)!}\int_0^t e^{\lambda r}r^{3n+2}(t-r)^2 \frac{1}{\lambda e^{-\lambda(t-r)}}dr.
\end{aligned}
\]
Notice that the terms $e^{\lambda r}$ are simplified, then
\[
\mathcal J_{n+1}(t) = \frac{2^{n+1}e^{\lambda t}}{\lambda^{n+1}} \frac{1}{(3n+2)!} \int_0^t (t-r)^2 r^{3n+2} dr.
\]
It follows from a simple integral calculation that
\[
\begin{aligned}
\int_0^t (t-r)^2 r^{3n+2} dr = &~{}  \left(\frac{t^{3n+5}}{3n+3} - \frac{2t^{3n+5}}{3n+4} + \frac{t^{3n+5}}{3n+5}\right) = \frac{2t^{3n+5}}{(3n+3)(3n+4)(3n+5)}.
\end{aligned}
\]
Then
\[
\mathcal J_{n+1}(t) = \frac{2^{n+2}e^{\lambda t}}{\lambda^{n+1}} \frac{t^{3(n+1)+2}}{(3(n+1)+2)!}.
\]
This finally proves \eqref{eq:prod_Tn2}.
\end{enumerate}
\end{proof}

\subsection{Proof of Theorem \ref{MT1}, linear perturbative case} The rest of this Section is devoted to the proof of Theorem \ref{MT1}. This proof will be divided in several steps for a better organization of ideas.

\medskip
\noindent
{\bf Step 1:} For any $t \in [0,T]$ and $x \in B(0,t) \subseteq \R^d$ denote
\begin{equation}\label{def_v}
v(t,x) := \E\left[ {\bf 1}_{\{T_N^t\geq t\}}\frac{\Delta T_N^t}{\overline \rho (\Delta T_N^t)} \mathcal R(\Phi_{f,d,t,\delta})(X_N^t) \prod_{k=0}^{N-1}{\bf 1}_{\{T_k^t < t\}} \frac{\Delta T_k^t}{\rho (\Delta T_k^t)}\mathcal R(\Phi_{c,d,t,\delta})(t-T_k^t,X_k^t) \right],
\end{equation}
\[
\xi_n(t,x) := f(X_n^t) \prod_{k=0}^{n-1} c(t-T_k^t,X_k^t),
\]
and
\[
\overline \xi_{n}(t,x) := \mathcal R(\Phi_{f,d,t,\delta})(X_n^t) \prod_{k=0}^{n-1} \mathcal R(\Phi_{c,d,t,\delta})(t-T_k^t,X_k^t).
\]
This step is committed to bound $|U(t,x) - v(t,x)|$. Let  $n \in \N$ satisfy $T_n^t = t$ and $T_{n-1} < t$ (i.e, $n$ will take the place of the random variable $N$). For all $x \in B(0,t)$ and $k = 0,\ldots,n$, using Remark \ref{rem:cota_Xnt},
\[
|X_k^t| \leq |x| + T_k^t \leq 2t.
\]
Therefore for all $x \in B(0,t)$, the random variables $(X_k^t)_{k=0}^{n}$ lie in the ball centered at $0$ with radius $2t$. Assumptions \eqref{eq:approx_f1} and \eqref{eq:approx_c1} imply that
\[
|f(X_n^t) - \mathcal R(\Phi_{f,d,t,\delta})(X_n^t)| \leq \varepsilon,
\]
and for all $k= 0,\ldots,n-1$,
\[
\big| c(t-T_k^t,X_k^t) - \mathcal R(\Phi_{c,d,t,\delta})(t-T_k^t,X_k^t) \big| \leq \varepsilon.
\]
We apply Lemma \ref{lemma:mult_DNNs}  on $C_n$, to have
\begin{equation}\label{problemas}
\begin{aligned}
\left|\xi_n(t,x)-\overline \xi_{n}(t,x)\right| \leq &~{}  (\varepsilon + \|f\|_\infty + n \|c\|_\infty)^n \varepsilon \\
\leq &~{}  (n \overline B)^n \left( 1+\frac2n \right)^n \varepsilon , \quad \overline{B}:= \max\{ \|c\|_\infty, \|f\|_\infty, \varepsilon\} \\
\leq &~{}  e^2 (n \overline B)^n \varepsilon.
\end{aligned}
\end{equation}
Of course estimate \eqref{problemas} is just brute force, but it represents that smallnes will be key to avoid possible blow-up.
Given \eqref{eq:prod_Tn} in Lemma \ref{lemma:Expectation_NL} we have that for any $n \in \N$,
\[
\E\left[{\bf 1}_{\{T_n^t \geq t\}}\frac{\Delta T_n^t}{\overline \rho(\Delta T_n^t)}\left(\prod_{k=0}^{n-1} {\bf 1}_{\{T_k^t < t\}} \frac{\Delta T_k^t}{\rho(\Delta T_k^t)}\right)\left|\xi_n(t,x) - \overline \xi_n(t,x)\right|\right]\leq \frac{t^{2n+1}}{(2n+1)!} e^2(n\overline B)^n \varepsilon.
\]
Now, by the use of the tower property (or law of total expectation) we have that
\[
\begin{aligned}
& \left| U(t,x) - v(t,x) \right| \\
&\leq \E\left[\left.\E\left[{\bf 1}_{\{T_N^t \geq t\}}\frac{\Delta T_N^t}{\overline \rho(\Delta T_N^t)}\left(\prod_{k=0}^{N-1} {\bf 1}_{\{T_k^t < t\}} \frac{\Delta T_k^t}{\rho(\Delta T_k^t)}\right)\left|\xi_N(t,x) - \overline \xi_N(t,x)\right|\right|N\right]\right]\\
&\leq e^2 \E\left[\frac{t^{2N+1}}{(2N+1)!} (N\overline B)^N \varepsilon \right].
\end{aligned}
\]
With the assumption $\rho(t) = \lambda e^{-\lambda t}$, the random times $T_{n}^t$ are sum of exponential random variables and then $N=\min\{k \in \N: T_k^t = t\}$ is a Poisson counting process, i.e., for $n \in \N$
\begin{equation}\label{eq:PN_nl}
\mathbb P(N=n) = e^{-\lambda t} \frac{(\lambda t)^n}{n!}.
\end{equation}
This and the definition of the expected value implies that ($0^0 =1$ by convention)
\[
\begin{aligned}
\left| U(t,x) - v(t,x) \right| &\leq \varepsilon e^2 \sum_{n\geq 0} \frac{t^{2n+1}}{(2n+1)!}  n^n \overline B^{n}  \PP(N=n) \\
&=  \varepsilon e^2 e^{-\lambda t} \sum_{n\geq 0} \frac{t^{2n+1}}{(2n+1)!}   \frac{( n \overline B \lambda t )^{n}}{n!} =  \varepsilon e^2 t e^{-\lambda t} \sum_{n\geq 0} \frac{n^n  (\overline B \lambda t^3 )^{n} }{(2n+1)! n!}.
\end{aligned}
\]
The classical quotient ratio test reveals that $\lim_{n\to +\infty} |a_{n+1}/a_n| =0$ with $a_n:= \frac{n^n  (\overline B \lambda t^3 )^{n} }{(2n+1)! n!}$. Consequently, the series has infinite radius of convergence.  To bound this series, we use the Stirling formula:
\begin{equation}\label{Stirling}
\lim_{n\to +\infty} \frac{n!}{\left( \frac{n}{e}\right)^n\sqrt{2\pi n}} =1, \quad \hbox{or } \left( \frac{n}{e}\right)^n\sqrt{2\pi n} e^{1/(12n+1)}<n! <\left( \frac{n}{e}\right)^n\sqrt{2\pi n} e^{1/(12n)}.
\end{equation}
Therefore, thanks to \eqref{Stirling},
\[
\begin{aligned}
\left| U(t,x) - v(t,x) \right| \leq &~{} C \varepsilon  t e^{-\lambda t} \sum_{n\geq 0} \frac{  (e\overline B \lambda t^3 )^{n} }{(2n+1)! } \\
\leq &~{}  \frac{Ct \varepsilon}{\sqrt{\overline B \lambda t^3}}   e^{-\lambda t} \sum_{n\geq 0} \frac{  (\sqrt{e\overline B \lambda t^3} )^{2n+1} }{(2n+1)! }   \leq   \frac{C\varepsilon}{\sqrt{\overline B \lambda t}}   e^{-\lambda t} \sinh (\sqrt{e\overline B \lambda t^3}).
\end{aligned}
\]
We have used that for any $n \geq 0$:
\begin{equation}\label{eq:step1_nl}
\frac{t^{2n+1}}{(2n+1)!} \leq \sinh(t) \leq e^t.
\end{equation}
Finally, we conclude
\begin{equation}\label{step1}
\left| U(t,x) - v(t,x) \right| \leq   \frac{C\varepsilon}{\sqrt{\overline B \lambda t}}   e^{-\lambda t} \sinh (\sqrt{e\overline B \lambda t^3}). 
\end{equation}

\medskip
\noindent
{\bf Step 2:} For all $t \in [0,T]$ we want to approximate $v(t,x)$ in \eqref{def_v} by a DNN. For $n \in \N$ we can find a DNN that approximates $\overline \xi_n(t,x)$ for all $t \in [0,T]$, $x \in B(0,t)$. Indeed, we have from \eqref{eq:approx_f1},
\[
|\mathcal R(\Phi_{f,d,t,\delta})(X_n^t)| \leq \varepsilon + \|f\|_\infty, 
\]
and for all $k = 0,\ldots,n-1$, using \eqref{eq:approx_c1},
\[
|\mathcal R(\Phi_{c,d,t,\delta})(t-T_k^t,X_k^t)| \leq  \varepsilon + \|c\|_\infty. 
\]
Therefore, from Corollary \ref{lemma:mult_dnn_final}, for all $\gamma \in \left(0,\frac 12\right)$, $n \in \N$ there exists $\Psi_{\overline B,\gamma}^n \in {\bf N}$ such that for all $x \in B(0,t)$,
\[
\left|\overline \xi_{n}(t,x) - \mathcal R\left(\Psi_{\overline B,\gamma}^{n}\right)(x)\right| \leq \gamma.
\]
This bound and Lemma \ref{lemma:Expectation_NL}, equation \eqref{eq:prod_Tn} imply that
\[
\E\left[{\bf 1}_{\{T_n^t \geq t\}}\frac{\Delta T_n^t}{\overline \rho(\Delta T_n^t)}  \left( \prod_{k=0}^{n-1}{\bf 1}_{\{T_k^t < t\}} \frac{\Delta T_k^t}{\rho(\Delta T_k^t)}\right) \left|\overline{\xi}_n(t,x) - \mathcal R\left(\Psi_{\overline B,\gamma}^{n}\right)(x)\right|  \right] \leq \frac{t^{2n+1}}{(2n+1)!} \gamma.
\]
Moreover, it follows from tower property that
\[
\begin{aligned}
&\left|v(t,x) - \E\left[{\bf 1}_{\{T_N^t \geq t\}}\frac{\Delta T_N^t}{\overline \rho(\Delta T_N^t)}  \left( \prod_{k=0}^{n-1}{\bf 1}_{\{T_k^t < t\}} \frac{\Delta T_k^t}{\rho(\Delta T_k^t)} \right)\mathcal R\left(\Psi_{\overline B,\gamma}^{N}\right)(x)\right]\right| \\
&\hspace{.5cm}\leq \E\left[\E\left[\left.{\bf 1}_{\{T_N^t \geq t\}}\frac{\Delta T_N^t}{\overline \rho(\Delta T_N^t)}  \left( \prod_{k=0}^{N-1}{\bf 1}_{\{T_k^t < t\}} \frac{\Delta T_k^t}{\rho(\Delta T_k^t)}\right) \left|\overline{\xi}_N(t,x) - \mathcal R\left(\Psi_{\overline B,\gamma}^{N}\right)(x)\right|  \right| N\right] \right]\\
&\hspace{.5cm} \leq \gamma\E\left[\frac{t^{2N+1}}{(2N+1)!}\right].
\end{aligned}
\]
Finally, from definition of the expectation and bound \eqref{eq:step1_nl} one has
\begin{equation}\label{step2}
\begin{aligned}
& \left|v(t,x) - \E\left[{\bf 1}_{\{T_N^t \geq t\}}\frac{\Delta T_N^t}{\overline \rho(\Delta T_N^t)}  \left( \prod_{k=0}^{n-1}{\bf 1}_{\{T_k^t < t\}} \frac{\Delta T_k^t}{\rho(\Delta T_k^t)} \right)\mathcal R\left(\Psi_{\overline B,\gamma}^{N}\right)(x)\right]\right| \\
&\quad \leq \gamma \sum_{n\geq 0} \frac{t^{2n+1}}{(2n+1)!} e^{-\lambda t} \frac{(\lambda t)^n}{n!}\\
&\quad \leq \gamma e^{t} \sum_{n\geq 0} e^{-\lambda t} \frac{(\lambda t)^n}{n!} = \gamma e^t.
\end{aligned}
\end{equation}

\medskip
\noindent
{\bf Step 3:} For $n \in \N$, $M \in \N$ define $\Phi_{\gamma}^{i,n} \in {\bf N}$, $i=1,\ldots,M$ by its realization
\[
\mathcal R\left(\Phi_{t,\gamma}^{i,n}\right)(x) = {\bf 1}_{\{T_{i,n}^t \geq t\}} \frac{\Delta T_{i,n}^t}{\rho(\Delta T_{i,n}^t)} \prod_{k=0}^{n-1} {\bf 1}_{\{T_{i,k}^t < t\}} \frac{\Delta T_{i,k}^t}{\rho(\Delta T_{i,k}^t)}\mathcal R\left(\Psi_{\overline B,\gamma}^{i,n}\right)(x),
\]
and
\begin{equation}\label{eq:MC_nl}
\hbox{E}_{M,\gamma}(t,x) := \frac 1M \sum_{i=1}^{M} \mathcal R\left(\Phi_{t,\gamma}^{i,N_i}\right)(x),
\end{equation}
where $(T_{i,k}^t)_{k=0}^{N_i}$ and $N_i$ are $M$ i.i.d. copies of $(T_k^t)_{k=0}^{N}$ and $N$ respectively, and $\Psi_{\overline B,\gamma}^{i,N_i}$ consider $(T_{i,k}^t)_{k=0}^{N_{i}}$ and $M$ i.i.d. copies of $(X_{k}^t)_{k=0}^{N}$, namely $(X_{i,k}^t)_{k=0}^{N_i}$. Now we want to bound
\[
\E\left[\left| \E\left[\mathcal R\left(\Phi_{t,\gamma}^N\right)(x)\right] - \hbox{E}_{M,\gamma}(t,x) \right|^2\right]^{\frac 12}.
\]
By properties of the expected value, first we have that
\[
\begin{aligned}
\E\left[\left| \E\left[\mathcal R\left(\Phi_{t,\gamma}^N\right)(x)\right] - \hbox{E}_{M,\gamma}(t,x)  \right|^2\right]^{\frac 12} &= \frac 1{\sqrt M} \left(\E\left[ \mathcal R\left(\Phi_{t,\gamma}^{N}\right)^2(x) \right]-\E\left[ \mathcal R\left(\Phi_{t,\gamma}^{N}\right)(x)\right]^2\right)^{\frac 12}\\
&\leq \frac 1{\sqrt M} \E\left[ \mathcal R(\Phi_{t,\gamma}^{N})^2(x) \right]^{\frac 12}.
\end{aligned}
\]
Then notice that for $n \in \N$,
\[
\left|\mathcal R\left(\Psi_{\overline B,\gamma}^n\right)(x)\right| \leq 1+\overline B^n,
\]
and
\[
\begin{aligned}
\E\left[\mathcal R\left(\Phi_{t,\gamma}^n\right)^2(x) \right] &= \mathcal  \E\left[\left({\bf 1}_{\{T_n^t\geq t\}}\frac{\Delta T_n^t}{\overline\rho(\Delta T_n^t)}\prod_{k=0}^{n-1}{\bf 1}_{\{T_k^t<t\}}\frac{\Delta T_k^t}{\rho(\Delta T_k^t)}\right)^2\mathcal R\left(\Psi_{\overline B,\gamma}^{k}\right)^2(x)\right]\\
&\leq \left(1+\overline B^n\right)^2 \E\left[\left({\bf 1}_{\{T_n^t\geq t\}}\frac{\Delta T_n^t}{\overline\rho(\Delta T_n^t)}\prod_{k=0}^{n-1}{\bf 1}_{\{T_k^t<t\}}\frac{\Delta T_k^t}{\rho(\Delta T_k^t)}\right)^2\right].
\end{aligned}
\]
By Lemma \ref{lemma:Expectation_NL} and \eqref{eq:prod_Tn2}, we have
\[
\E\left[\mathcal R\left(\Phi_{t,\gamma}^n\right)^2(x) \right]  \leq \left(1+\overline B^n\right)^2 \frac{2^{n+1} e^{\lambda t}}{\lambda^n} \frac{t^{3n+2}}{(3n+2)!}.
\]
Then using tower property, we have
\[
\begin{aligned}
\E\left[\mathcal R\left(\Phi_{t,\gamma}^N\right)^2(x)\right] &= \E\left[\E\left[\left. \mathcal R\left(\Phi_{t,\gamma}^N\right)^2(x)\right|N\right]\right] \leq \E\left[4\overline B^{2N} \frac{2^{N+1}e^{\lambda t}}{\lambda^N}\frac{t^{3N+2}}{(3N+2)!}\right].
\end{aligned}
\]
The definition of the expected value and \eqref{eq:PN_nl} imply that
\[
\E\left[\mathcal R\left(\Phi_{t,\gamma}^N\right)^2(x)\right]  \leq \sum_{n\geq 0} 4\overline B^{2n}\frac{2^{n+1}e^{\lambda t}}{\lambda^n} \frac{t^{3n+2}}{(3n+2)!} e^{-\lambda t} \frac{(\lambda t)^n}{n!}.
\]
Here, some terms can be cancelled. In addition, given \eqref{eq:step1_nl} it follows for all $n \in \N$ that
\[
t^{2(2n+1)} \leq (2n+1)! e^{t^2}.
\]
Therefore
\[
\E\left[\mathcal R\left(\Phi_{t,\gamma}^N\right)^2(x)\right] \leq 8e^{t^2} \sum_{n\geq 0} \frac{\left(2\overline B^2\right)^n}{n!} \frac{(2n+1)!}{(3n+2)!}.
\]
For all $n \in \N$ it is true that
\[
\frac{(2n+1)!}{(3n+2)!} < 1,
\]
and consequently
\[
\E\left[\mathcal R\left(\Phi_{t,\gamma}^N\right)^2(x)\right]  \leq 8 e^{t^2+2\overline B^2}.
\]
Finally by previous calculus we have
\begin{equation}\label{step3}
\E\left[\left| \E\left[\mathcal R\left(\Phi_{t,\gamma}^N\right)(x)\right] - \hbox{E}_{M,\gamma}(t,x) \right|^2\right]^{\frac 12} \leq \frac{2\sqrt 2}{\sqrt M} e^{\frac{t^2+2\overline B^2}{2}}.
\end{equation}

\medskip
\noindent
{\bf Step 4:} We want to bound the term
\[
\E \left[\left|\E[N] - \frac 1M \sum_{i=1}^M N_i\right|^2\right]^{\frac 12} = \frac 1{\sqrt M} \left(\E[N^2]-\E[N]^2\right)^{\frac 12}.
\]
Note that
\[
\E[N] = \sum_{n \geq 0} n e^{-\lambda t} \frac{(\lambda t)^n}{n!} = \lambda t \sum_{n \geq 0} e^{-\lambda t} \frac{(\lambda t)^n}{n!} = \lambda t,
\]
and
\[
\E[N^2] = \sum_{n \geq 0} n^2 e^{-\lambda t} \frac{(\lambda t)^n}{n!} = \lambda t \sum_{n\geq 0} (n+1) e^{-\lambda t}\frac{(\lambda t)^n}{n!} = \lambda^2 t^2 + \lambda t.
\]
We conclude
\begin{equation}\label{step4}
\E \left[\left|\E[N] - \frac 1M \sum_{i=1}^M N_i\right|^2\right]^{\frac 12} = \frac {\sqrt{\lambda t}}{\sqrt M}.
\end{equation}

\medskip
\noindent
{\bf Step 5:} Recall that $x\in \overline{B}(0,t)$. By combining \eqref{step1}, \eqref{step2}, \eqref{step3} and \eqref{step4} we have
\[
\begin{aligned}
& \E\left[ \frac{1}{|B(0,t)|} \int_{B(0,t)}\big| U(t,x) - \hbox{E}_{M,\gamma}(t,x) \big|^2 dx + \left|\E[N] - \frac 1M \sum_{i=1}^M N_i\right|^2 \right] \\
&\hspace{1cm} \leq   3\delta^2(1+\lambda t \overline B)^2e^{2t(1+\lambda \overline B - \lambda)} + 3\gamma^2 e^{2t} + \frac {24}M e^{t^2+2\overline B^2} + \frac{\lambda t}{M}=: {\bf error}_u^2.
\end{aligned}
\]
To bound ${\bf error}_u^2$ we will choose $M = \lceil \delta^{-2} \rceil$ and $\gamma = \delta$. Then
\[
{\bf error}_u^2 \leq \delta^2 \left(3(1+\lambda t \overline B)^2 e^{2t(1+\lambda \overline B - \lambda)} + 3e^{2t} + 24e^{t^2+2\overline B^2}+\lambda t\right).
\]
For simplicity, define $C$ as the constant:
\[
C:=C(T) = \max_{t \in [0,T]}\left(\sqrt 3(1+\lambda t \overline B)e^{t(1+\lambda \overline B - \lambda)} + \sqrt 3e^{t} + 2\sqrt 6 e^{\frac{t^2+3\overline B^2}2} + \sqrt{\lambda t}\right).
\]
Then by definition,
\[
{\bf error}_u \leq \delta C,
\]
and
\[
\E\left[ \frac{1}{|B(0,t)|} \int_{B(0,t)}\left|U(t,x) - \hbox{E}_{M,\gamma}(t,x) \right|^2 dx + \left|\E[N] - \frac 1M \sum_{i=1}^M N_i\right|^2 \right] \leq \delta^2C^2.
\]
This implies that there exist $(\overline N_i)_{i=1}^{M} \subset \N$ and vectors $(\overline X_n^t)_{n=0}^{\overline N_i} \subset B(0,2t)$ such that
\[
\sup_{|x|\leq t}\left|U(t,x) -  \overline{\hbox{E}}_{M,\gamma}(t,x) \right|^2+  \frac{1}{|B(0,t)|}\int_{B(0,t)}\left|U(t,x) -  \overline{\hbox{E}}_{M,\gamma}(t,x) \right|^2 dx + \left|\E[N] - \frac 1M \sum_{i=1}^M \overline N_i\right|^2 \leq \delta^2 C^2.
\]
Where $\overline{\hbox{E}}_{M,\gamma}(t,x)$ is defined as in \eqref{eq:MC_nl}, with the fixed values of $(\overline N_i)_{i=1}^M$, $(\overline X_n^t)_{n=0}^{\overline N_i}$ and the choice of $M$.

\medskip
\noindent
{\bf Step 6:} By choosing $\delta = \frac{\varepsilon}{C}$, we have
\[
\sup_{|x|\leq t}\left|U(t,x) -  \overline{\hbox{E}}_{M,\gamma}(t,x) \right|^2 + \left(\frac{1}{|B(0,t)|}\int_{B(0,t)}\left| U(t,x) -  \overline{\hbox{E}}_{M,\gamma}(t,x) \right|^2 dx\right)^{\frac 12} \leq \varepsilon.
\]
Moreover
\[
\sum_{i=1}^{M} \overline N_i \leq M \left(\delta C + \E[N]\right)  \leq M (1 + \lambda T).
\]

\medskip
\noindent
{\bf Step 7:} To conclude the proof, we will show that $\overline{\hbox{E}}_{M,\gamma}(t,x)$, with the previous choices of $M$, $\delta$ and $\gamma$ defines for any $t \in [0,T]$ and $\varepsilon \in (0,1]$ a DNN $\Psi_{t,d,\varepsilon} \in \N$ such that for all $x \in B(0,t)$:
\[
\mathcal R(\Psi_{t,d,\varepsilon})(x) = \overline{\hbox{E}}_{M,\gamma}(t,x).
\]
First notice that for any $i \in \{1,\ldots,M\}$ and $k \in \{0,\ldots,\overline N_i - 1\}$, we can use Corollary \ref{cor:DNN_fixed_comp} to obtain that
\[
\mathcal R(\Phi_{c,d,t,\delta})(t-T_k^t,\cdot) \in \mathcal R\left(\left\{\Phi \in {\bf N}: \mathcal D(\Phi) = \mathcal D(\Phi_{c,d,t,\varepsilon}) \odot (d,2d+1,d+1)\right\}\right).
\]
In addition, we can see $\mathcal R(\Phi_{c,d,t,\delta})(t-T_k^t,\overline X_{k}^t)$ and $\mathcal R(\Phi_{f,d,t,\delta})(\overline X_{\overline N_i}^t)$ as the realization of DNNs with input $x \in B(0,t)$ and the same dimensions of $\mathcal R(\Phi_{c,d,t,\delta})(t-T_k^t,\cdot)$ and $\mathcal R(\Phi_{f,d,t,\delta})$, respectively, providing that $\left(\overline X_{k}^t\right)_{k=0}^{\overline N_i}$ are translations of $x$ and the use of Lemma \ref{lemma:DNN_affine}.

\subsection{Computation of DNN parameters} Now, we want that the DNNs with realization $\mathcal R(\Phi_{c,d,t,\delta})(t-T_k^t,\overline X_{k}^t)$ and $\mathcal R(\Phi_{f,d,t,\delta})(\overline X_{\overline N_i}^t)$, namely $\overline \Phi_{c,d,t,\delta}^k$ and $\overline \Phi_{f,d,t,\delta}$, respectively, have the same number of hidden layers for Lemma \ref{lemma:mult_dnn_final}. Notice that for $k \in \{0,\ldots,\overline N_i - 1\}$,
\[
\begin{aligned}
\mathcal H\left(\overline \Phi_{c,d,t,\delta}^k\right) = &~{} \mathcal H(\Phi_{c,d,t,\delta}) + 2 =: H_1,\\
\mathcal H\left(\overline \Phi_{f,d,t,\delta}\right) = &~{} \mathcal H(\Phi_{f,d,t,\delta}) =: H_2.
\end{aligned}
\]
The three cases of study, namely $H_1 < H_2$, $H_1 > H_2$ and $H_1 = H_2$ can be treated in the same way as in the proof of Theorem \ref{thrm:main_linear}, Step 4. Then the DNNs $\overline \Phi_{c,d,t,\delta}^{k}$ and $\overline \Phi_{f,d,t,\delta}$ satisfy
\[
\mathcal P\left(\overline \Phi_{c,d,t,\delta}^k\right) \leq 2\mathcal P(\Phi_{f,d,t,\delta}) + 4\mathcal P(\Phi_{c,d,t,\delta}) + 4\max\{H_1,H_2\} + 8(2d+1),
\]
and
\[
\mathcal P(\overline \Phi_{f,d,t,\delta}) \leq 2\mathcal P(\Phi_{f,d,t,\delta}) + 4\mathcal P(\Phi_{c,d,t,\delta}) + 4\max\{H_1,H_2\} + 8(2d+1).
\]
From Assumptions \ref{ass2}, \eqref{eq:cota_param_phiu_nl} and \eqref{eq:dim_dnn_encontradas_nl}, one has that
\[
\begin{aligned}
\mathcal P\left(\overline \Phi_{c,d,t,\delta}^k\right) &\leq 6Bd^p\delta^{-\alpha} + 4(Bd^p\delta^{-\beta}+2) + 8(2d+1) \leq (10B+32)d^p\delta^{-\overline \gamma},
\end{aligned}
\]
with $\overline \gamma := \max\{\alpha,\beta\}$, and
\[
\mathcal P(\overline \Phi_{f,d,t,\delta}) \leq (10B+32)d^p\delta^{-\overline \gamma}.
\]
Therefore Lemma \ref{lemma:mult_dnn_final} implies that for all $i \in \{1,\ldots,M\}$, the DNN $\Psi_{\overline B,\gamma}^{i,\overline N_i}$ satisfy $\mathcal H\left(\Psi_{\overline B,\gamma}^{i,\overline N_i}\right) = \max\{H_1,H_2\}=: H$ and
\[
\begin{aligned}
\mathcal P\left(\Psi_{\overline B,\gamma}^{i,\overline N_i}\right)&\leq 2C'(\overline N_i + 1)^4 \left(\log\lceil \gamma^{-1}\rceil + 1 + \log \left\lceil \, \overline B \, \right\rceil\right) + 2 \sum_{k=0}^{\overline N_i-1} \mathcal P\left(\overline \Phi_{c,d,t,\delta}^n\right) + 2 \mathcal P(\overline \Phi_{f,d,t,\delta})\\
&\leq 2C'(\overline N_i + 1)^4 \left(\log\lceil \gamma^{-1}\rceil + 1 + \log \left\lceil \, \overline B \, \right\rceil \right) + 2(\overline N_i + 1)(10B+32)d^p\delta^{-\overline\gamma},
\end{aligned}
\]
for some $C'>0$. In addition, Lemma \ref{lemma:DNN_suma} implies that there exists a DNN $\Psi_{t,d,\varepsilon} \in {\bf N}$ such that
\[
\mathcal R(\Psi_{t,d,\varepsilon}) \equiv \frac 1M \sum_{i=1}^M \mathcal R\left(\Phi_{t,\gamma}^{i,\overline N_i}\right) \equiv \overline{\hbox{E}}_{M,\gamma}(t,\cdot),
\]
with the choices of $M$, $\delta$ and $\gamma$ in steps 5 and 6. This proves \eqref{eq:main_eps_nl}. Moreover $\mathcal H(\Psi_{t,d,\varepsilon}) = H$ and
\[
\mathcal P(\Psi_{t,d,\varepsilon}) \leq \sum_{i=1}^{M} \mathcal P\left(\Phi_{t,\gamma}^{i,\overline N_i}\right) = \sum_{i=1}^M \mathcal P\left(\Psi_{\overline B,\gamma}^{i,\overline N_i}\right).
\]
Then
\[
\mathcal P(\Psi_{t,d,\varepsilon}) \leq 2C'\left(\log\lceil \gamma^{-1}\rceil + 1 + \log \left\lceil \, \overline B \, \right\rceil \right)\sum_{i=1}^M(\overline N_i + 1)^4  + 2(10B+32)d^p\delta^{-\overline\gamma}\sum_{i=1}^M(\overline N_i + 1).
\]
The bound on the sum of the $\overline N_i$'s implies that
\[
\sum_{i=1}^{M} (\overline N_i + 1)^4 \leq \left(\sum_{i=1}^{M} \overline N_i + M\right)^4 \leq M^4(2+\lambda T)^{4},
\]
and therefore
\[
\begin{aligned}
\mathcal P(\Psi_{t,d,\varepsilon}) \leq &~{}  2C'\left(\log\lceil \gamma^{-1}\rceil + 1 + \log \left\lceil \, \overline B \, \right\rceil \right)M^4(2+\lambda T)^4  \\
&~{}+ 2(10B+32)d^p\delta^{-\overline\gamma}M(2+\lambda T).
\end{aligned}
\]
From the choices
\[ M = \lceil \delta^{-2}\rceil \leq 2\delta^{-2} \hspace{.5cm}\hbox{and} \hspace{.5cm}\gamma = \delta,
\]
we have
\[
\mathcal P(\Psi_{t,d,\varepsilon}) \leq 32C'\left(\delta^{-1} + 1 + \log \left\lceil \, \overline B \, \right\rceil \right)\delta^{-8}(2+\lambda T)^4  + 4(10B+32)d^p\delta^{-\overline\gamma}\delta^{-2}(2+\lambda T).
\]
Finally, the choice $\delta = \varepsilon / C$ implies that
\[
\mathcal P(\Psi_{t,d,\varepsilon}) \leq \varepsilon^{-9}32C'C^9\left(2 + \log \left\lceil \, \overline B \, \right\rceil \right)(2+\lambda T)^4  + \varepsilon^{-\overline\gamma-2}4C^{\overline \gamma + 2}(10B+32)d^p(2+\lambda T).
\]
By defining
\[
\eta := \eta(p,\alpha,\beta) = \max\{9,\overline \gamma + 2,p\},
\]
and
\[
\widetilde B :=\widetilde B(B,T,\alpha,\beta,\lambda) = 32C'C^9\left(2+\log \left\lceil \, \overline B \, \right\rceil \right)(2+\lambda T)^4 + 4C^{\overline \gamma + 2}(10B+32)(2+\lambda T),
\]
one can conclude \eqref{eq:main_param_nl}.

\newpage
 
{\color{black} 
\section{Nonlinear perturbation of waves}\label{sec:6}

Let $p \in \N$, $p \geq 2$ and consider now the case of a pure power nonlinearity, such that now the nonlinear wave equation becomes
\begin{equation}\label{eq:nlwk}
	\begin{cases}
		\partial_{tt} u - \Delta u = c(t,x)u^p & \hbox{ in } \R\times \R^d,\\
		u(0,\cdot) = 0 & \hbox{ in } \R^d,\\
		\partial_t u(0,\cdot) = f & \hbox{ in } \R^d,
	\end{cases}
\end{equation}
where $c : [0,T] \times \R^d \longrightarrow \R$ is a bounded continuous function. The branching diffusion representation of the associated solution $u$ \eqref{eq:nlwk} is given by
\begin{equation}\label{eq:u-p}
U(t,x) = \E\left[\prod_{i=1}^{(p-1)N_t^p+1} {\bf 1}_{\{T_{k_i}^t\geq t\}}\frac{\Delta T_{k_i}^t}{\overline \rho(\Delta T_{k_i}^t)}f\left(X_{T_{k_i}^t}^{k_i}\right)\prod_{i=1}^{N_t^p} {\bf 1}_{\{T_{\bar k_i}^t < t\}} \frac{\Delta T_{\bar k_i}^t}{\rho(\Delta T_{\bar k_i}^t)}c\left(t-T_{\bar k_i}^t,X_{T_{\bar k_i}^t}^{\bar k_i}\right)\right].
\end{equation}
Here, $N_t^p$ denotes the number of branches prior to time $t$ given each particle branches into exactly $p$ offspring particles. The following Lemma states the probability measure of $N_t^p$.
\begin{lemma}\label{prob-Ntp} For every $n,p \in \N$, $p \geq 2$ one has
\begin{equation}\label{eq:prob-Ntp}
\PP(N_t^p = n) = \begin{pmatrix} -\frac{1}{p-1} \\ n \end{pmatrix}(-1)^n e^{-\lambda t} \left(1-e^{-\lambda t (p-1)}\right)^n.
\end{equation}
\end{lemma}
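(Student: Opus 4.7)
The plan is to recognize $(N_t^p)_{t\geq 0}$ as a continuous-time pure birth Markov chain and solve the resulting Kolmogorov forward system via generating functions and characteristics. First I would note that, by the memoryless property of the exponential clocks $\tau_k \sim \mathrm{Exp}(\lambda)$ driving the branching, the state of the system at time $t$ is fully encoded by the number of alive particles. Each branching replaces one particle by $p$ offspring, so after $n$ branching events there are exactly $1 + n(p-1)$ alive particles, each carrying an independent $\mathrm{Exp}(\lambda)$ clock. Hence the waiting time from state $n$ until the next branching is $\mathrm{Exp}(\mu_n)$ with $\mu_n := \lambda(1 + n(p-1))$, which identifies $N_t^p$ as a pure birth chain with these state-dependent rates.

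Writing $P_n(t) := \PP(N_t^p = n)$, the Kolmogorov forward equations then read
\[
P_0'(t) = -\lambda P_0(t), \qquad P_n'(t) = -\mu_n P_n(t) + \mu_{n-1} P_{n-1}(t), \quad n \geq 1,
\]
with $P_n(0) = \delta_{n,0}$. Introducing the probability generating function $\phi(t,z) := \sum_{n \geq 0} P_n(t) z^n$, multiplying the $n$-th equation by $z^n$ and summing yields the first-order linear PDE
\[
\partial_t \phi = \lambda(z-1)\phi + \lambda(p-1)\, z(z-1)\, \partial_z \phi, \qquad \phi(0,z) = 1.
\]

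I would then solve this PDE by the method of characteristics. The characteristic ODE $dz/dt = -\lambda(p-1) z(z-1)$ separates via partial fractions to yield the invariant $(z-1)\, e^{\lambda(p-1)t}/z = \mathrm{const}$, while along each characteristic the equation for $\phi$ reduces to $d\phi/\phi = -dz/((p-1)z)$, giving the invariant $\phi \cdot z^{1/(p-1)} = \mathrm{const}$. Matching the arbitrary function to the initial condition $\phi(0,z)=1$ and simplifying produces the closed form
\[
\phi(t,z) = e^{-\lambda t}\bigl[1 - (1 - e^{-\lambda(p-1)t})\, z\bigr]^{-1/(p-1)}.
\]
Applying the generalized binomial theorem $(1-u)^{-\alpha} = \sum_n \binom{-\alpha}{n}(-u)^n$ with $\alpha = 1/(p-1)$ and $u = (1-e^{-\lambda(p-1)t})\, z$, and reading off the coefficient of $z^n$, yields exactly \eqref{eq:prob-Ntp}.

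The only conceptually delicate step is the first: turning the tree-indexed branching description into a clean one-dimensional Markov chain for the total number of splittings $N_t^p$. Once this reduction is in place, the derivation of the forward system, the PDE for $\phi$, and its explicit integration are all standard and present no real obstacle; a sanity check for $p=2$ (the Yule process) recovers $\binom{-1}{n}(-1)^n = 1$ and thus $\PP(N_t^2 = n) = e^{-\lambda t}(1-e^{-\lambda t})^n$, in agreement with the computation already used in Section \ref{sec:5}. If one prefers to avoid generating functions, the formula can alternatively be verified directly by substituting the proposed $P_n(t)$ into the forward system and checking the recursion on the coefficients $c_n := (-1)^n\binom{-1/(p-1)}{n}$, which satisfies $n(p-1)\, c_n = (1+(p-1)(n-1))\, c_{n-1}$.
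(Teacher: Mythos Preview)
Your proof is correct and takes a genuinely different route from the paper. The paper proceeds by conditioning on the first branching time $\tau_0$: given $\tau_0 = s < t$, the first particle spawns $p$ independent copies of the process on $[0,t-s]$, and an induction on $n$ then shows $\PP(N_t^p = n) = q_n\, e^{-\lambda t}(1-e^{-\lambda t(p-1)})^n$ for a sequence $(q_n)$ satisfying the convolution-type recursion $q_n = \frac{1}{n(p-1)}\sum_{i_1+\cdots+i_p=n-1} q_{i_1}\cdots q_{i_p}$. The coefficients are then identified by showing that the generating function $Q_p(x)=\sum_n q_n x^n$ solves the ODE $(p-1)Q_p' = Q_p^p$, $Q_p(0)=1$, whose explicit solution $(1-x)^{-1/(p-1)}$ yields $q_n = (-1)^n\binom{-1/(p-1)}{n}$ via Taylor expansion.

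You instead bypass the recursive branching analysis by recognizing at once that $N_t^p$ is a pure birth chain with rates $\mu_n = \lambda(1+n(p-1))$, writing the Kolmogorov forward system, and solving the resulting first-order PDE for the generating function $\phi(t,z)$ by characteristics. This is more streamlined and makes the Markov structure explicit. The paper's approach, on the other hand, exploits the tree structure directly and is closer in spirit to the branching framework used throughout; in particular, the same ``condition on the first split'' device is reused immediately afterward to derive the recursions for $\mathcal I_{n,p}$ and $\mathcal J_{n,p}$. One small slip: your sanity-check reference to Section~\ref{sec:5} is off --- that section treats the linear perturbative case where $N$ is Poisson, not the $p=2$ Yule case.
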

\begin{remark}
For all $p \in \N$, $p \geq 2$, the measure given in \eqref{eq:prob-Ntp} is indeed a probability measure, given that for all $\alpha \in \R$, $|x| < 1$,
\begin{equation}\label{eq:gen-bin}
(1+x)^{\alpha} = \sum_{n=0}^{\infty} \begin{pmatrix} \alpha \\ n \end{pmatrix} x^{n}.
\end{equation}
Then 
\[
\begin{aligned}
\sum_{n=0}^{\infty} \PP(N_t^p = n) &= e^{-\lambda t} \sum_{n=0}^{\infty} \begin{pmatrix} -\frac{1}{p-1} \\ n \end{pmatrix} (e^{-\lambda t(p-1)} - 1)^n \\
&= e^{-\lambda t} \left(1+e^{-\lambda t(p-1)} - 1\right)^{-\frac{1}{p-1}}  = e^{-\lambda t} e^{\lambda t} = 1.
\end{aligned}
\]
\end{remark}
\begin{proof}[Proof of Lemma \ref{prob-Ntp}]
Let $p \in \N$, $p \geq 2$. To prove the Lemma \ref{prob-Ntp} first we will prove by induction that
\begin{equation}\label{eq:prob-Ntp2}
\PP \left( N_{t}^p = n \right) = q_n e^{-\lambda t} \left(1-e^{-\lambda t (p-1)}\right)^{n},
\end{equation}
with $(q_n)_{n \in \N} := (q_n(p))_{n \in \N}$ satisfying $q_0 = 0$ and for $n \geq 1$,
\[
q_n = \frac{1}{n(p-1)}\sum_{\substack{ i_1,\ldots,i_p \in \{0,...,n-1\} \\ i_1 +\cdots+i_p = n-1} } q_{i_1} \cdots q_{i_p}.
\]
Indeed, for $n = 0$ notice that the event represented by $\{N_t^p = 0\}$ is equivalent to the event which the first particle, namely $0$, is alive at time $t$, that is to say, the event $\{\tau_0 \geq t\}$. Then
\[
\PP \left( N_t^p = 0 \right) = \PP(\tau_0 \geq t) = \overline \rho(t) = e^{-\lambda t}.
\]
As $q_0 = 1$, the case $n=0$ is true.

\medskip
Suppose that \eqref{eq:prob-Ntp2} holds for some $n \in \N$ and we want to compute $\PP(N_{t}^p = n + 1)$. It follows from applying law of total probability, conditioning on the branching time of the first particle that
\begin{equation}\label{eq:TP-Ntp}
\PP\left( N_t^p = n+1 \right) = \int_0^t \PP \left( N_t^p = n+1 \big| \tau_0 = s \right) \rho(s) ds.
\end{equation}
Given $\tau_0 = s$, at time $s$ the first particle branches into $p$ offspring particles, generating $p$ independent branching processes whose number of particles follows the law of $N_{t-s}^p$, namely $\tilde N_1, \ldots, \tilde N_p$. Therefore the event of being $n+1$ branches up to time $t$, given that $\tau_0 = s$ is equivalent to the union of all the events such that the sum of $\tilde N_1, \ldots, \tilde N_p$ is equal to $n$, implying that
\[
\begin{aligned}
\PP \left( N_t^p = n+1 | \tau_0 = s \right) &= \sum_{\substack{ i_1,\ldots,i_p \in \{0,...,n\} \\ i_1 +\cdots+i_p = n} } \PP \left( \tilde N_1 = i_1, \ldots \tilde N_p = i_p \right) \\
&=\sum_{\substack{ i_1,\ldots,i_p \in \{0,...,n\} \\ i_1 +\cdots+i_p = n} } \PP(\tilde N_1 = i_1) \cdots \PP(\tilde N_p = i_p),
\end{aligned}
\]
where the last equality comes from the independence of such a branching processes. Then it follows from the inductive hypothesis that
\[
\begin{aligned}
&\sum_{\substack{ i_1,\ldots,i_p \in \{0,...,n\} \\ i_1 +\cdots+i_p = n} } \PP(\tilde N_1 = i_1) \cdots \PP(\tilde N_p = i_p) \\
&\hspace{1cm}= \sum_{\substack{ i_1,\ldots,i_p \in \{0,...,n\} \\ i_1 +\cdots+i_p = n} } q_{i_1}e^{-\lambda (t-s)}\left(1-e^{-\lambda (t-s)(p-1)}\right)^{i_1} \cdots q_{i_p}e^{-\lambda (t-s)}\left(1-e^{-\lambda (t-s)(p-1)}\right)^{i_p} \\
&\hspace{1cm}= e^{-\lambda (t-s) p}\left(1-e^{-\lambda (t-s) (p-1)}\right)^n \sum_{\substack{ i_1,\ldots,i_p \in \{0,...,n\} \\ i_1 +\cdots+i_p = n} } q_{i_1} \cdots q_{i_p}.
\end{aligned}
\]
Combining previous equality, \eqref{eq:TP-Ntp} and the definition of $q_n$ one gets
\[
\begin{aligned}
\PP(N_t^p = n+1) &= \sum_{\substack{ i_1,\ldots,i_p \in \{0,...,n\} \\ i_1 +\cdots+i_p = n} } q_{i_1} \cdots q_{i_p} \int_{0}^{t} e^{-\lambda(t-s)p}\left(1-e^{-\lambda(t-s)(p-1)}\right)^n \lambda e^{-\lambda s} ds \\
& =  \sum_{\substack{ i_1,\ldots,i_p \in \{0,...,n\} \\ i_1 +\cdots+i_p = n} } q_{i_1} \cdots q_{i_p} \int_{0}^{t} e^{-\lambda s p}\left(1-e^{-\lambda s(p-1)}\right)^n \lambda e^{-\lambda (t-s)} ds, \\
\end{aligned}
\]
where the last equality comes form a change of variables $s \mapsto t-s$. This integral can be computed using that
\[
\frac{d}{ds} \left(\left(1-e^{-\lambda s (p-1)}\right)^{n+1}\right) = \lambda (n+1)(p-1) \left(1-e^{-\lambda s (p-1)}\right)^n e^{-\lambda s(p-1)}.
\]
Finally,
\[
\begin{aligned}
\PP(N_t^p=n+1) &= \frac{1}{(n+1)(p-1)} \sum_{\substack{ i_1,\ldots,i_p \in \{0,...,n\} \\ i_1 +\cdots+i_p = n} } q_{i_1} \cdots q_{i_p} e^{-\lambda t} \left.\left(1-e^{-\lambda s (p-1)}\right)^{n+1}\right|_{0}^{t} \\
&= q_{n+1}e^{-\lambda t} (1-e^{-\lambda t(p-1)})^{n+1},
\end{aligned}
\]
which proves \eqref{eq:prob-Ntp2}. 

\medskip
To conclude the Lemma \ref{prob-Ntp}, we will prove that for all $n \in \N$,
\[
q_n = \begin{pmatrix} -\frac{1}{p-1} \\ n\end{pmatrix} (-1)^n.
\]
For this consider the generating function of $(q_n)_{n \in \N}=(q_n(p))_{n \in \N}$:
\[
Q_p(x) = \sum_{n \geq 0} q_n x^n.
\]
By properties on the multiplication of series, one has
\[
Q_p(x)^p = \sum_{n\geq 0} \sum_{\substack{ i_1,\ldots,i_p \in \{0,...,n\} \\ i_1 +\cdots+i_p = n} } q_{i_1} \cdots q_{i_p} x^n = \sum_{n \geq 0} (n+1)(p-1) q_{n+1} x^n.
\]
On the other hand side note that
\[
Q_p'(x) = \sum_{n \geq 1} nq_nx^{n-1} = \sum_{n\geq 0} (n+1) q_{n+1} x^n.
\]
Those computations yields to the following ODE
\[
Q_p(x)^p = (p-1)Q'_p(x),
\]
with initial condition $Q_p(0) = q_0 = 1$. The solution of previous ODE can be obtained explicitly, and it is
\[
Q_p(x) = \frac 1{(1-x)^{\frac{1}{p-1}}}.
\]
Thanks to the explicit value of $Q_p(x)$, for any $n \in \N$, $q_n$ will be simply $n$-th coefficient of the Taylor expansion of $Q_p(x)$ centered at $x_0 = 0$, that is,
\[
q_n = \frac{Q_p^{(n)}(0)}{n!} = \frac{(-1)^n\left(-\frac 1{p-1}\right)\left(-\frac 1{p-1}-1\right)\cdots\left(-\frac 1{p-1} - (n-1)\right)}{n!} = (-1)^n \begin{pmatrix} -\frac 1{p-1} \\ n \end{pmatrix}.
\]
This completes the proof of Lemma \ref{prob-Ntp}.
\end{proof}
The form of the law of $N_t^p$ in \eqref{eq:prob-Ntp} allow us to compute its first and second moment, as we see in lemma \ref{lem:moments-Ntp}
\begin{lemma}[First and second moments of $N_t^p$]\label{lem:moments-Ntp}
Let $p \in \N$, $p \geq 2$ and $t \in [0,T]$. Then
\begin{equation} \label{eq:E-Ntp}
\E\left[N_t^p\right] = \frac{e^{\lambda t (p-1)}-1}{p-1},
\end{equation}
and
\begin{equation}\label{eq:E-Ntp2}
\E\left[(N_t^p)^2\right] = \frac{e^{\lambda t(p-1)}-1}{p-1} + p\left(\frac{e^{\lambda t (p-1)}-1}{p-1}\right)^2.
\end{equation}
\end{lemma}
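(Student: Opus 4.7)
The plan is to leverage the generating function $Q_p(x) = (1-x)^{-1/(p-1)}$ that was already identified in the proof of Lemma \ref{prob-Ntp}, since the probabilities $\PP(N_t^p = n) = q_n e^{-\lambda t}(1 - e^{-\lambda t(p-1)})^n$ are of the form $q_n \alpha \beta^n$ with $\alpha = e^{-\lambda t}$ and $\beta = 1 - e^{-\lambda t(p-1)}$. Therefore moments reduce to evaluating derivatives of $Q_p$ at $\beta$, avoiding any direct manipulation of the generalized binomial coefficients.

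First I would observe
\[
\E[N_t^p] = \alpha \sum_{n \geq 0} n q_n \beta^n = \alpha \beta\, Q_p'(\beta), \qquad \E[N_t^p(N_t^p-1)] = \alpha \beta^2\, Q_p''(\beta).
\]
A direct differentiation of $Q_p(x) = (1-x)^{-1/(p-1)}$ gives
\[
Q_p'(x) = \frac{1}{p-1}(1-x)^{-p/(p-1)}, \qquad Q_p''(x) = \frac{p}{(p-1)^2}(1-x)^{-(2p-1)/(p-1)}.
\]
Evaluating at $\beta$ uses the identity $1-\beta = e^{-\lambda t(p-1)}$, which produces $Q_p'(\beta) = \frac{1}{p-1} e^{\lambda t p}$ and $Q_p''(\beta) = \frac{p}{(p-1)^2} e^{\lambda t(2p-1)}$. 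Substituting then yields
\[
\E[N_t^p] = \frac{e^{-\lambda t}(1-e^{-\lambda t(p-1)}) e^{\lambda t p}}{p-1} = \frac{e^{\lambda t(p-1)}-1}{p-1},
\]
which is \eqref{eq:E-Ntp}, and analogously
\[
\E[N_t^p(N_t^p-1)] = \frac{p \, e^{-\lambda t}(1-e^{-\lambda t(p-1)})^2 e^{\lambda t(2p-1)}}{(p-1)^2} = p\left(\frac{e^{\lambda t(p-1)}-1}{p-1}\right)^2.
\]
The second moment \eqref{eq:E-Ntp2} then follows from $\E[(N_t^p)^2] = \E[N_t^p(N_t^p-1)] + \E[N_t^p]$.

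There is really no hard obstacle here, since the hard algebraic work (identifying $Q_p$) was already done in the proof of Lemma \ref{prob-Ntp}. The only points that require care are: (i) making sure the series can be differentiated term-by-term at $x = \beta < 1$, which is immediate since $Q_p$ is analytic on $(-1,1)$ and $\beta \in (0,1)$ for $t > 0$; and (ii) checking the limiting case $t = 0$, where both formulas vanish consistently with $N_0^p = 0$. As an alternative cross-check I would note that the same conclusion can be reached by conditioning on the first branching time $\tau_0$, which yields the linear ODE $m'(t) = \lambda + \lambda(p-1)m(t)$ with $m(0) = 0$ for $m(t) := \E[N_t^p]$, whose solution is precisely \eqref{eq:E-Ntp}. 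An analogous ODE, or a direct use of variance-of-branching formulas, recovers \eqref{eq:E-Ntp2}, but the generating function route is the shortest.
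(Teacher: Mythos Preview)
Your proposal is correct and follows essentially the same approach as the paper: both arguments rely on the generalized binomial series underlying $Q_p(x)=(1-x)^{-1/(p-1)}$. The paper carries this out by manipulating the identity $n\binom{-\frac{1}{p-1}}{n}=-\tfrac{1}{p-1}\binom{-\frac{p}{p-1}}{n-1}$ and summing directly, whereas you package the same computation as $\alpha\beta\,Q_p'(\beta)$ and $\alpha\beta^2\,Q_p''(\beta)$ together with the factorial-moment decomposition $\E[(N_t^p)^2]=\E[N_t^p(N_t^p-1)]+\E[N_t^p]$; this is a slightly cleaner presentation of the same algebra.
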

\begin{proof}
{\bf First moment:} Notice from definition of the expectation that
\[
\E\left[N_t^p\right] = \sum_{n=0}^{\infty} n \PP(N_t^p = n) = \sum_{n=0}^{\infty} n \begin{pmatrix} -\frac{1}{p-1} \\ n \end{pmatrix}(-1)^n e^{-\lambda t} \left(1-e^{-\lambda t (p-1)}\right)^n.
\]
It follows from properties on the binomial coefficient that
\begin{equation}\label{eq:binom}
\begin{pmatrix} -\frac{1}{p-1} \\ n \end{pmatrix} = -\frac{1}{(p-1)n} \begin{pmatrix} -\frac{1}{p-1}-1 \\ n-1 \end{pmatrix} = -\frac{1}{(p-1)n} \begin{pmatrix} -\frac{p}{p-1} \\ n-1 \end{pmatrix} .
\end{equation}
Therefore 
\[
\begin{aligned}
\E\left[N_t^p\right] &= -\frac{e^{-\lambda t}}{p-1}\sum_{n=1}^{\infty}  \begin{pmatrix} -\frac{p}{p-1} \\ n-1 \end{pmatrix} 
\left(e^{-\lambda t (p-1)}-1\right)^n \\
&= \frac{e^{-\lambda t}}{p-1}\left(1-e^{-\lambda t(p-1)}\right) \sum_{n=0}^{\infty}\begin{pmatrix} -\frac{p}{p-1} \\ n \end{pmatrix} \left(e^{-\lambda t (p-1)}-1\right)^n.
\end{aligned}
\]
Previous series has the form of \eqref{eq:gen-bin}, then
\[
\begin{aligned}
\E\left[N_t^p\right] &=  \frac{e^{-\lambda t}}{p-1}\left(1-e^{-\lambda t(p-1)}\right) \left(1+e^{-\lambda t (p-1)} - 1\right)^{-\frac{p}{p-1}}\\
&= \frac{e^{\lambda t(p-1)}}{p-1} \left(1-e^{-\lambda t(p-1)}\right) = \frac{e^{\lambda t(p-1)}-1}{p-1}.
\end{aligned}
\]
This concludes \eqref{eq:E-Ntp}.

\medskip
\noindent
{\bf Second moment:} As equal as in the first moment, we will use the definition of the expectation
\[
\E\left[\left(N_t^p\right)^2\right] = \sum_{n=0}^{\infty} n^2 \begin{pmatrix} -\frac{1}{p-1} \\ n \end{pmatrix}(-1)^n e^{-\lambda t} \left(1-e^{-\lambda t (p-1)}\right)^n.
\]
Now, by using \eqref{eq:binom}
\[
\begin{aligned}
\E\left[\left(N_t^p\right)^2\right] &= -\frac{e^{-\lambda t}}{p-1}\sum_{n=1}^{\infty} n \begin{pmatrix} -\frac{p}{p-1} \\ n-1 \end{pmatrix} \left(e^{-\lambda t (p-1)}-1\right)^n \\
&= \frac{e^{-\lambda t}}{p-1} \left(1-e^{-\lambda t (p-1)}\right)\sum_{n=0}^{\infty} (n+1) \begin{pmatrix} -\frac{p}{p-1} \\ n \end{pmatrix} \left(e^{-\lambda t (p-1)}-1\right)^n.
\end{aligned}
\]
On the one hand side, the computation for the first moment gives
\[
\sum_{n=0}^{\infty} \begin{pmatrix} -\frac{p}{p-1} \\ n \end{pmatrix} \left(e^{-\lambda t (p-1)}-1\right)^n = e^{\lambda t p}.
\]
On the other hand side, by using properties on the binomial coefficient
\[
\begin{aligned}
\sum_{n=0}^{\infty} n \begin{pmatrix} -\frac{p}{p-1} \\ n \end{pmatrix} \left(e^{-\lambda t (p-1)}-1\right)^n &= -\frac{p}{p-1} \sum_{n=1}^{\infty} \begin{pmatrix} -\frac{p}{p-1}-1 \\ n-1 \end{pmatrix}\left(e^{-\lambda t (p-1)}-1\right)^n \\
&= \frac{p}{p-1}\left(1-e^{-\lambda t (p-1)}\right) \sum_{n=0}^{\infty} \begin{pmatrix} -\frac{2p-1}{p-1} \\ n \end{pmatrix}\left(e^{-\lambda t (p-1)}-1\right)^n \\
&= \frac{p}{p-1}\left(1-e^{-\lambda t (p-1)}\right)e^{\lambda t (2p-1)}.
\end{aligned}
\]
Therefore
\[
\begin{aligned}
\E\left[\left(N_t^p\right)^2\right] &= \frac{e^{-\lambda t}}{p-1} \left(1-e^{-\lambda t (p-1)}\right) \left(e^{\lambda t p} + \frac{p}{p-1} \left(1-e^{-\lambda t (p-1)}\right)e^{\lambda t (2p-1)}\right)\\
&= \frac{e^{\lambda t (p-1)}-1}{p-1}\left(1 + p \frac{e^{\lambda t (p-1)}-1}{p-1}\right).
\end{aligned}
\]
Finally, we conclude \eqref{eq:E-Ntp2}.
\end{proof}

For $n,p \in \N$, $p \geq 2$ define
\begin{equation}\label{def_I_np}
\mathcal I_{n,p}(t) := \E\left[\prod_{i=1}^{(p-1)n+1} {\bf 1}_{\{T_{k_i}^t \geq t\}} \frac{\Delta T_{k_i}^t}{\overline \rho(\Delta T_{k_i}^t)} \prod_{i=1}^{n} {\bf 1}_{\{T_{\bar k_i}^t<t\}}\frac{\Delta T_{\bar k_i}^t}{\rho(\Delta T_{\bar k_i}^t)}\right],
\end{equation}
and
\begin{equation}\label{def_J_np}
\mathcal J_{n,p}(t) := \E\left[\prod_{i=1}^{(p-1)n+1}{\bf 1}_{\{T_{k_i}^t \geq t\}} \frac{(\Delta T_{k_i}^t)^2}{\overline \rho(\Delta T_{k_i}^t)^2}\prod_{i=1}^{n}{\bf 1}_{\{T_{\bar k_i}^t < t\}}\frac{(\Delta T_{\bar k_i}^t)^2}{\rho(\Delta T_{\bar k_i}^t)^2}\right].
\end{equation}

Next Lemma states a recurrence for the functions $\mathcal I_{n,p}(t)$ and $\mathcal J_{n,p}(t)$.

\begin{lemma}
For any $p \in \N$, $p \geq 2$ and $t \in [0,T]$, the following are satisfied: first,
\[
\mathcal I_{0,p}(t) = t, \hspace{.5cm} \hbox{and} \hspace{.5cm} \mathcal J_{0,p}(t) = e^{\lambda t} t^2.
\]
Second, for all $n \in \N$, $n \geq 1$,
\[
\mathcal I_{n,p}(t) = \int_0^t (t-s) \sum_{\substack{ i_1,\ldots,i_p \in \{0,...,n-1\} \\ i_1 +\cdots+i_p = n-1} }\mathcal I_{i_1,p}(s) \cdots \mathcal I_{i_p,p}(s) ds,
\]
and
\[
\mathcal J_{n,p}(t) = \int_0^t \frac{(t-s)^2}{\rho(t-s)} \sum_{\substack{ i_1,\ldots,i_p \in \{0,...,n-1\} \\ i_1 +\cdots+i_p = n-1} }\mathcal J_{i_1,p}(s) \cdots \mathcal J_{i_p,p}(s) ds.
\]
\end{lemma}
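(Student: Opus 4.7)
The plan is to compute the base case $n=0$ directly and then establish the recurrences by conditioning on the lifetime $\tau_0$ of the root particle, exploiting the fact that the $p$ subtrees spawned at the first branching are i.i.d. copies of the original branching process run over a shorter time horizon.

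First, for $n=0$ the event in the definitions of $\mathcal I_{0,p}(t)$ and $\mathcal J_{0,p}(t)$ forces no branching to occur: only the root particle is present, so $\tau_0 \geq t$ and $\Delta T_0^t = t$. Since $\rho(s)=\lambda e^{-\lambda s}$, we have $\PP(\tau_0 \geq t)=\overline\rho(t)=e^{-\lambda t}$, giving
\[
\mathcal I_{0,p}(t) = \frac{t}{\overline\rho(t)}\,\PP(\tau_0 \geq t) = t, \qquad \mathcal J_{0,p}(t) = \frac{t^2}{\overline\rho(t)^2}\,\PP(\tau_0 \geq t) = e^{\lambda t}\,t^2.
\]

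For $n\geq 1$ the root particle must have branched, i.e.\ $\tau_0 < t$, producing $p$ offspring subtrees which, conditional on $\tau_0=s$, are i.i.d.\ copies of the original branching mechanism started on the time interval $[0,t-s]$. In the root of the first product, $k=0$ contributes the factor $\frac{\tau_0}{\rho(\tau_0)}=\frac{s}{\rho(s)}$ (respectively $\frac{s^2}{\rho(s)^2}$ for $\mathcal J$). The remaining $n-1$ internal branchings and all $(p-1)n+1$ leaves must be distributed among the $p$ i.i.d.\ subtrees; if the $j$-th subtree accounts for $i_j$ internal branchings, then it carries $(p-1)i_j+1$ leaves, and the constraints $\sum_j i_j = n-1$ and $\sum_j((p-1)i_j+1)=(p-1)n+1$ are automatically consistent. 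Conditioning on $\tau_0=s$ and using Fubini together with independence of the subtrees yields
\[
\mathcal I_{n,p}(t) = \int_0^t \frac{s}{\rho(s)}\,\rho(s)\,\sum_{\substack{i_1,\ldots,i_p\in\{0,\ldots,n-1\}\\ i_1+\cdots+i_p=n-1}}\prod_{j=1}^{p}\mathcal I_{i_j,p}(t-s)\,ds,
\]
and analogously for $\mathcal J_{n,p}(t)$ with $\frac{s^2}{\rho(s)^2}$ in place of $\frac{s}{\rho(s)}$. The two $\rho(s)$ factors cancel in the $\mathcal I$ case, while in the $\mathcal J$ case one factor remains as $\frac{s^2}{\rho(s)}$. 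A change of variables $s\mapsto t-s$ then produces the stated recurrences.

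The main technical point is the combinatorial bookkeeping that guarantees the law of total expectation decomposes cleanly: one must verify that after the root branching, the family of surviving descendants and dead particles of the whole tree is in bijective correspondence with the disjoint union of the families of the $p$ subtrees, so that the product over $\overline K_t$ and $K_t$ factorizes as a product over the subtrees. This relies on the tree structure encoded in Definition \ref{BM} and on the strong Markov nature of the branching mechanism at time $\tau_0$; once this is in place, independence of the subtrees turns the expectation of the product into the product of expectations, and the sum over compositions $i_1+\cdots+i_p=n-1$ arises from summing over all ways to distribute the $n-1$ remaining internal branchings.
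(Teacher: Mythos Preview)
Your proof sketch is correct and follows essentially the same approach as the paper: compute the base case directly from $\PP(\tau_0\geq t)=\overline\rho(t)$, and for $n\geq 1$ condition on the root lifetime $\tau_0=s$, factor the product over the $p$ i.i.d.\ subtrees on $[0,t-s]$, sum over compositions $i_1+\cdots+i_p=n-1$, and finish with the change of variables $s\mapsto t-s$. The only cosmetic difference is that the paper phrases the conditioning via the auxiliary counts $\tilde N_1,\ldots,\tilde N_p$ of the subtrees, whereas you describe the same decomposition directly in terms of the tree structure; both lead to the identical integral recurrence.
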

\begin{proof}
Notice from definitions of $\mathcal I_{n,p}(t)$ and $\mathcal J_{n,p}(t)$ in \eqref{def_I_np} and \eqref{def_J_np} respectively that
\[
\mathcal I_{0,p}(t) = \E\left[\frac{t}{\overline \rho(t)} {\bf 1}_{\{\tau_0 \geq t\}}\right] = \frac{t}{\overline \rho(t)} \overline \rho(t) = t,
\]
and
\[
\mathcal J_{0,p}(t) = \E\left[\frac{t^2}{\overline \rho(t)^2} {\bf 1}_{\{\tau_0 \geq t\}}\right] = \frac{t^2}{\overline \rho(t)^2} \overline \rho(t)^2 = t^2 e^{\lambda t}.
\]
Let $n \in \N$, $n \geq 1$. As we know that are at least one branch, therefore the first particle has lifetime $\tau_0$ less than $t$. Conditioning in the value of $\tau_0$ one has
\[
\mathcal I_{n,p}(t) = \int_0^t s \E\left[\left.\prod_{i=1}^{(p-1)n+1} {\bf 1}_{\{T_{k_i}^{t-s} \geq t-s\}} \frac{\Delta T_{k_i}^{t-s}}{\overline \rho(\Delta T_{k_i}^{t-s})} \prod_{i=1}^{n-1} {\bf 1}_{\{T_{\bar k_i}^{t-s}<{t-s}\}}\frac{\Delta T_{\bar k_i}^{t-s}}{\rho(\Delta T_{\bar k_i}^{t-s})}\right| \tau_0 = s\right] ds.
\]
Given $\tau_0 = s$, at time $s$ the first particle branches into $p$ offspring particles, generating $p$ independent branching processes whose number of particles follows the law of $N_{t-s}^p$, namely $\tilde N_1, \ldots, \tilde N_p$. Therefore conditioning into $\tau_0 = s$ is equivalent to conditioning into $\tilde N_{1} + \ldots +\tilde N_p = n-1$.

\medskip
Notice additionally that each branching process has $(p-1)\tilde N_{i} + 1$ particles alive in time $t-s$ and $\tilde N_i$ particles not alive in time $t-s$. Therefore, conditioning into $\tilde N_{1} + \ldots + \tilde N_p = n-1$ one has
\[
\sum_{i=1}^{p} ((p-1)\tilde N_i + 1) = (p-1)n +1.
\]
Moreover,
\[
\begin{aligned}
& \E\left[\left.\prod_{i=1}^{(p-1)n+1} {\bf 1}_{\{T_{k_i}^{t-s} \geq t-s\}} \frac{\Delta T_{k_i}^{t-s}}{\overline \rho(\Delta T_{k_i}^{t-s})} \prod_{i=1}^{n-1} {\bf 1}_{\{T_{\bar k_i}^{t-s}<{t-s}\}}\frac{\Delta T_{\bar k_i}^{t-s}}{\rho(\Delta T_{\bar k_i}^{t-s})}\right| \tau_0 = s\right] \\
& = \E\left[\left. \prod_{j=1}^{p} \left(\prod_{i=1}^{(p-1)\tilde N_j+1} {\bf 1}_{\{T_{k_i}^{t-s} \geq t-s\}} \frac{\Delta T_{k_i}^{t-s}}{\overline \rho(\Delta T_{k_i}^{t-s})} \prod_{i=1}^{\tilde N_j } {\bf 1}_{\{T_{\bar k_i}^{t-s}<{t-s}\}}\frac{\Delta T_{\bar k_i}^{t-s}}{\rho(\Delta T_{\bar k_i}^{t-s})}\right)\right| \tilde N_1 + \ldots + \tilde N_p = n-1\right].
\end{aligned}
\]
Using the tower property
\[
\begin{aligned}
&\E\left[\left.\prod_{i=1}^{(p-1)n+1} {\bf 1}_{\{T_{k_i}^{t-s} \geq t-s\}} \frac{\Delta T_{k_i}^{t-s}}{\overline \rho(\Delta T_{k_i}^{t-s})} \prod_{i=1}^{n-1} {\bf 1}_{\{T_{\bar k_i}^{t-s}<{t-s}\}}\frac{\Delta T_{\bar k_i}^{t-s}}{\rho(\Delta T_{\bar k_i}^{t-s})}\right| \tau_0 = s\right] \\
&\hspace{1cm}= \E \left[\left. \mathcal I_{\tilde N_1,p}(t-s) \cdots \mathcal I_{\tilde N_p,p}(t-s) \right| \tilde N_1 + \ldots + \tilde N_p = n-1\right] \\
&\hspace{1cm} = \sum_{i_1,\ldots,i_p \in \{0,\ldots,n-1\}} \mathcal I_{i_1,p}(t-s) \cdots \mathcal I_{i_p,p}(t-s) \PP\left(\left.\tilde N_1 = i_1, \ldots, \tilde N_p = i_p \right| \tilde N_1 + \ldots + \tilde N_p = n-1\right).
\end{aligned}
\]
Notice that the probability $\PP\left(\left.\tilde N_1 = i_1, \ldots, \tilde N_p = i_p \right| \tilde N_1 + \ldots + \tilde N_p = n-1\right)$ is equal to 1 when $i_1 + \ldots + i_p = n-1$ and 0 otherwise. Therefore
\[
\begin{aligned}
\mathcal I_{n,p}(t) &= \int_0^t s  \sum_{\substack{ i_1,\ldots,i_p \in \{0,...,n-1\} \\ i_1 +\cdots+i_p = n-1} } \mathcal I_{i_1,p}(t-s) \cdots \mathcal I_{i_p,p}(t-s)ds \\
&= \int_0^t (t-s)  \sum_{\substack{ i_1,\ldots,i_p \in \{0,...,n-1\} \\ i_1 +\cdots+i_p = n-1} } \mathcal I_{i_1,p}(s) \cdots \mathcal I_{i_p,p}(s)ds,
\end{aligned}
\]
where the last inequality comes from a change of variables $s \mapsto t-s$. The equality for $\mathcal J_{n,p}(t)$ uses the same arguments, an by noting that for $n \geq 1$:
\[
\mathcal J_{n,p}(t) = \int_0^t \frac{s^2}{\rho(s)} \E\left[\left.\prod_{i=1}^{(p-1)n+1} {\bf 1}_{\{T_{k_i}^{t-s} \geq t-s\}} \frac{\left(\Delta T_{k_i}^{t-s}\right)^2}{\overline \rho(\Delta T_{k_i}^{t-s})^2} \prod_{i=1}^{n-1} {\bf 1}_{\{T_{\bar k_i}^{t-s}<{t-s}\}}\frac{\left(\Delta T_{\bar k_i}^{t-s}\right)^2}{\rho(\Delta T_{\bar k_i}^{t-s})^2}\right| \tau_0 = s\right] ds.
\]
The proof is complete.
\end{proof}

\begin{cor}
For any $n,p \in \N$, $p \geq 2$ and $t \in [0,T]$
\begin{equation}\label{eq:Inp}
\mathcal I_{n,p}(t) = a_{n,p} t^{(p+1)n+1},
\end{equation}
and
\begin{equation}\label{eq:Jnp}
\mathcal J_{n,p}(t) \leq \left(\frac 2{\lambda}\right)^n b_{n,p} t^{(2p+1)n+2}e^{\lambda t ((p-1)n+1)},
\end{equation}
where $a_{0,p}=b_{0,p}=1$ and for any $n \geq 1$
\begin{equation}\label{eq:rec-anp}
a_{n,p} = \frac{1}{(p+1)n((p+1)n+1)}\sum_{\substack{ i_1,\ldots,i_p \in \{0,...,n-1\} \\ i_1 +\cdots+i_p = n-1} } a_{i_1,p} \cdots a_{i_p,p},
\end{equation}
and
\begin{equation}\label{eq:rec-bnp}
b_{n,p} = \frac{1}{(2p+1)n((2p+1)n+1)((2p+1)n+2)}\sum_{\substack{ i_1,\ldots,i_p \in \{0,...,n-1\} \\ i_1 +\cdots+i_p = n-1} } b_{i_1,p} \cdots b_{i_p,p}.
\end{equation}
\end{cor}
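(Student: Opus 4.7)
My plan is to prove \eqref{eq:Inp} and \eqref{eq:Jnp} by induction on $n$, treating the two identities separately but in parallel, since both reduce to the recurrences established in the preceding lemma combined with explicit integral computations.

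For \eqref{eq:Inp}, the base case $n=0$ is immediate since $\mathcal{I}_{0,p}(t)=t$ and $a_{0,p}t^{(p+1)\cdot 0+1}=t$. For the inductive step, assume $\mathcal{I}_{i,p}(s)=a_{i,p}s^{(p+1)i+1}$ for all $i<n$. Substituting into the recursion and using that $i_1+\cdots+i_p=n-1$ implies $\prod_{j=1}^{p} s^{(p+1)i_j+1}=s^{(p+1)(n-1)+p}=s^{(p+1)n-1}$, I obtain
\[
\mathcal{I}_{n,p}(t) = \Bigg(\sum_{\substack{i_1,\ldots,i_p\geq 0 \\ i_1+\cdots+i_p=n-1}} a_{i_1,p}\cdots a_{i_p,p}\Bigg)\int_0^t (t-s)\, s^{(p+1)n-1}\, ds.
\]
The remaining integral is a standard Beta function computation: $\int_0^t(t-s)s^k\, ds = t^{k+2}/((k+1)(k+2))$, so with $k=(p+1)n-1$ the denominator becomes $(p+1)n\,((p+1)n+1)$ and the exponent becomes $(p+1)n+1$. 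Comparing with \eqref{eq:rec-anp} gives exactly $\mathcal{I}_{n,p}(t)=a_{n,p}t^{(p+1)n+1}$, closing the induction.

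For \eqref{eq:Jnp}, the base case again matches: $\mathcal{J}_{0,p}(t)=e^{\lambda t}t^2=(2/\lambda)^0 b_{0,p} t^2 e^{\lambda t((p-1)\cdot 0+1)}$. For the inductive step, I substitute the assumed bound into the recursion for $\mathcal{J}_{n,p}$ and use $\rho(t-s)=\lambda e^{-\lambda(t-s)}$, so $1/\rho(t-s)=\lambda^{-1}e^{\lambda(t-s)}$. On the constraint $i_1+\cdots+i_p=n-1$ one computes the products of the three $i_j$-dependent factors: $\prod(2/\lambda)^{i_j}=(2/\lambda)^{n-1}$, $\prod s^{(2p+1)i_j+2}=s^{(2p+1)n-1}$, and $\prod e^{\lambda s((p-1)i_j+1)}=e^{\lambda s((p-1)n+1)}$. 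Combining with the factor $e^{\lambda(t-s)}$ collapses the exponential to $e^{\lambda t}e^{\lambda s(p-1)n}$. The only inequality (and the reason \eqref{eq:Jnp} is a bound, not an equality) is the crude estimate $e^{\lambda s(p-1)n}\leq e^{\lambda t(p-1)n}$ for $s\in[0,t]$, which pulls the $s$-dependent exponential out of the integral and yields the combined factor $e^{\lambda t((p-1)n+1)}$.

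After pulling out constants, the remaining integral is $\int_0^t(t-s)^2 s^{(2p+1)n-1}\,ds$, which by the identity $\int_0^t(t-s)^2 s^k\, ds = 2t^{k+3}/((k+1)(k+2)(k+3))$ (verified by expanding $(t-s)^2$) produces $2t^{(2p+1)n+2}$ divided by $(2p+1)n\,((2p+1)n+1)\,((2p+1)n+2)$. The extra factor of $2$ converts $(2/\lambda)^{n-1}/\lambda$ into $(2/\lambda)^n$, and the denominator is precisely the one appearing in \eqref{eq:rec-bnp}, so
\[
\mathcal{J}_{n,p}(t) \leq (2/\lambda)^n\, b_{n,p}\, t^{(2p+1)n+2}\, e^{\lambda t((p-1)n+1)},
\]
as required. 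There is no deep obstacle; the computation is pure bookkeeping of exponents and a Beta-integral identity. The only subtlety worth highlighting is the unavoidable switch from equality to inequality in the $\mathcal{J}$-step, caused by the non-constant exponential weight $e^{\lambda s(p-1)n}$ in the integrand; this is what forces \eqref{eq:Jnp} to be a bound rather than a closed form analogous to \eqref{eq:Inp}.
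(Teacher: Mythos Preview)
Your proof is correct and follows the same inductive strategy as the paper: substitute the inductive hypothesis into the recursion of the preceding lemma and compute the resulting Beta-type integral. In fact you go further than the paper, which writes out the $\mathcal I_{n,p}$ induction in detail but then only states ``Now we will prove \eqref{eq:Jnp}'' without carrying out the computation; your handling of the $\mathcal J_{n,p}$ bound---including the key observation that the exponential factor $e^{\lambda s(p-1)n}$ must be bounded by $e^{\lambda t(p-1)n}$, which is precisely what downgrades the identity to an inequality---is exactly what the paper leaves implicit.
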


\begin{proof}
First we prove the Lemma for $\mathcal I_{n,p}(t)$. Fix $p \geq 2$ and recall that $\mathcal I_{0,p}(t) = a_{0,p}t$.

Assume \eqref{eq:Inp} for some $n \in \N$. For the case $n+1$ one has
\[
\begin{aligned}
\mathcal I_{n+1,p}(t)&= \int_{0}^t (t-s) \sum_{\substack{ i_1,\ldots,i_p \in \{0,...,n\} \\ i_1 +\cdots+i_p = n} }\mathcal I_{i_1,p}(s) \cdots \mathcal I_{i_p,p}(s) ds.\\
&= \int_0^t (t-s)\sum_{\substack{ i_1,\ldots,i_p \in \{0,...,n\} \\ i_1 +\cdots+i_p = n} }a_{i_1,p}s^{(p+1)i_1+1} \cdots a_{i_p,p}s^{(p+1)i_p+1} ds.\\
&= \sum_{\substack{ i_1,\ldots,i_p \in \{0,...,n\} \\ i_1 +\cdots+i_p = n} }a_{i_1,p}\cdots a_{i_p,p} \int_0^t (t-s)s^{(p+1)n+p} ds \\
&= \sum_{\substack{ i_1,\ldots,i_p \in \{0,...,n\} \\ i_1 +\cdots+i_p = n} }a_{i_1,p}\cdots a_{i_p,p} t^{(p+1)(n+1)+1}  \left(\frac{1}{(p+1)(n+1)}-\frac{1}{(p+1)(n+1)+1}\right) \\
&= \frac{1}{(p+1)(n+1)((p+1)(n+1)+1)} \sum_{\substack{ i_1,\ldots,i_p \in \{0,...,n\} \\ i_1 +\cdots+i_p = n} }a_{i_1,p}\cdots a_{i_p,p} t^{(p+1)(n+1)+1} \\
&= a_{n+1,p} t^{(p+1)(n+1)+1}.
\end{aligned} 
\]
Then \eqref{eq:Inp} is true for all $n \in \N$.

\medskip
Now we will prove \eqref{eq:Jnp}.
\end{proof}

\begin{lemma}\label{lem:cotas-conv}
For all $n,p \in \N$, $p \geq 2$,
\begin{equation}\label{eq:cota-conv-anp}
\sum_{\substack{ i_1,\ldots,i_p \in \{0,...,n\} \\ i_1 +\cdots+i_p = n} } a_{i_1,p}\cdots a_{i_p,p} \leq \frac{1}{(p+2)^n},
\end{equation}
and
\begin{equation}\label{eq:cota-conv-bnp}
\sum_{\substack{ i_1,\ldots,i_p \in \{0,...,n\} \\ i_1 +\cdots+i_p = n} } b_{i_1,p}\cdots b_{i_p,p} \leq \frac{1}{(2(2p+1)(2p+3))^n}.
\end{equation}

\end{lemma}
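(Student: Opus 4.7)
The plan is to prove both bounds by a coefficient-wise majorization argument applied to a suitably normalized sequence. For \eqref{eq:cota-conv-anp}, introduce the normalization $\alpha_n := (p+2)^n a_{n,p}$ and the convolution $\tilde S_n := \sum_{i_1+\cdots+i_p=n}\alpha_{i_1}\cdots\alpha_{i_p}$, so that the claim is equivalent to $\tilde S_n \leq 1$ for every $n\geq 0$. The recurrence \eqref{eq:rec-anp} then reads $\alpha_0 = 1$ and $\alpha_n = \frac{(p+2)\,\tilde S_{n-1}}{(p+1)n\,((p+1)n+1)}$ for $n\geq 1$. The key idea is to compare the generating series $\Phi(x) := \sum_{n\geq 0}\alpha_n x^n$ with the majorant
\[
\Psi(x) := (1-x)^{-1/p} = \sum_{n\geq 0}\binom{n-1+1/p}{n}x^n,
\]
whose defining property is that $\Psi(x)^p = (1-x)^{-1}$, so every Taylor coefficient $[\Psi^p]_n$ equals $1$. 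Since all coefficients involved are nonnegative, a pointwise bound $\alpha_n \leq [\Psi]_n$ will automatically yield $\tilde S_n = [\Phi^p]_n \leq [\Psi^p]_n = 1$, giving \eqref{eq:cota-conv-anp}.

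The majorization $\alpha_n \leq [\Psi]_n$ is then established by strong induction on $n$. The base $n=0$ is the equality $\alpha_0 = 1 = [\Psi]_0$. For the inductive step, the induction hypothesis together with the product relation above gives $\tilde S_{n-1} \leq 1$, so the recurrence implies $\alpha_n \leq \frac{p+2}{(p+1)n\,((p+1)n+1)}$; the inductive step is then completed by the purely numerical inequality
\[
\frac{p+2}{(p+1)n\,((p+1)n+1)} \leq [\Psi]_n = \frac{\prod_{k=0}^{n-1}(1+pk)}{p^n\,n!}, \qquad n\geq 1.
\]
This numerical inequality is itself proved by an elementary auxiliary induction on $n$: its base $n=1$ is the trivial $p \leq p+1$, and the inductive step, obtained by taking the ratio of the $(n{+}1)$-th and $n$-th cases, reduces to $p\cdot n\,((p+1)n+1) \leq ((p+1)(n+1)+1)(1+pn)$, whose expansion is $(p+1)^2\, n + (p+2) \geq 0$.

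The argument for \eqref{eq:cota-conv-bnp} follows the identical architecture, setting $\beta_n := (2(2p+1)(2p+3))^n b_{n,p}$ and using the same majorant $\Psi$; the corresponding numerical inequality
\[
2(2p+3)\,p^n\,n!\leq n\,((2p+1)n+1)\,((2p+1)n+2)\prod_{k=0}^{n-1}(1+pk)
\]
is once more verified by an auxiliary induction whose inductive step, after the analogous ratio manipulation, produces a manifestly nonnegative polynomial expression in $n$ and $p$ with leading positive coefficient involving $(2p+1)^2$. The main obstacle in the whole argument is the correct guess of the majorant: the exponent $1/p$ in $(1-x)^{-1/p}$ is the unique value ensuring $\Psi^p = (1-x)^{-1}$ with all Taylor coefficients equal to $1$, which exactly matches the geometric character of the right-hand sides of \eqref{eq:cota-conv-anp}–\eqref{eq:cota-conv-bnp}.
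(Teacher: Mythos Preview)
Your argument is correct and takes a genuinely different route from the paper's proof. The paper proceeds by a direct induction on $n$ for the convolution sum itself, invoking the De~Pril recurrence
\[
\sum_{i_1+\cdots+i_p = n+1} a_{i_1,p}\cdots a_{i_p,p} \;=\; \sum_{i=1}^{n+1}\Bigl((p+1)\tfrac{i}{n+1}-1\Bigr) a_{i,p}\sum_{i_1+\cdots+i_p = n+1-i} a_{i_1,p}\cdots a_{i_p,p},
\]
and then bounding each factor with the inductive hypothesis and the defining recursion \eqref{eq:rec-anp}. Your approach instead rescales to $\alpha_n=(p+2)^n a_{n,p}$ and majorizes the generating series term by term by $(1-x)^{-1/p}$, whose $p$-th power has all Taylor coefficients equal to $1$; the whole lemma then reduces to the explicit numerical inequality $\frac{p+2}{(p+1)n((p+1)n+1)}\le \prod_{k=0}^{n-1}(1+pk)/(p^n n!)$, which you verify by a separate elementary induction. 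Your method is more self-contained (no external convolution identity is needed) and makes the role of the geometric rate $(p+2)^{-n}$ transparent through the choice of majorant; the paper's method is shorter once the De~Pril formula is accepted and avoids the auxiliary numerical inequalities. For the $b$-case your outline is a little brief, but the ratio inequality you indicate does expand (with $q=2p+1$) to $q^2(n+1)^2+3q(n+1)+2+pq^2 n(2n+1)+3pqn\ge 0$, which is indeed manifestly nonnegative.
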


\begin{proof}
Let $p \in \N$, $p \geq 2$. First we will prove \eqref{eq:cota-conv-anp}. For $n = 0$ the sum on the left hand side of \eqref{eq:cota-conv-anp} is simply $a_0^p = 1$, then $\eqref{eq:cota-conv-anp}$ is true for $n=0$. Suppose that \eqref{eq:cota-conv-anp} holds for all $i \leq n \in \N$. The $k$-fold convolution satisfies the following recurrence \cite{DePril}
\begin{equation}\label{rec-conv}
\sum_{\substack{ i_1,\ldots,i_p \in \{0,...,n+1\} \\ i_1 +\cdots+i_p = n+1} } a_{i_1,p}\cdots a_{i_p,p} = \sum_{i=1}^{n+1} \left((p+1)\frac i{n+1} - 1\right) a_{i,p} \sum_{\substack{ i_1,\ldots,i_p \in \{0,...,n+1-i\} \\ i_1 +\cdots+i_p = n+1-i} } a_{i_1,p}\cdots a_{i_p,p}.
\end{equation}
An inductive hypothesis implies that for all $1 \leq i \leq n+1$
\[
\sum_{\substack{ i_1,\ldots,i_p \in \{0,...,n+1-i\} \\ i_1 +\cdots+i_p = n+1-i} } a_{i_1,p}\cdots a_{i_p,p} \leq \frac{1}{(p+2)^{n+1-i}}.
\]
In addition, by definition of $a_{n,p}$ in \eqref{eq:rec-anp}, for all $1 \leq i \leq n+1$,
\[
\begin{aligned}
a_{i,p} &= \frac{1}{(p+1)i((p+1)i+1)} \sum_{\substack{ i_1,\ldots,i_p \in \{0,...,i-1\} \\ i_1 +\cdots+i_p = i-1} } a_{i_1,p}\cdots a_{i_p,p}\\
&\leq \frac{1}{(p+1)i((p+1)i+1)}\frac{1}{(p+2)^{i-1}}.
\end{aligned}
\]
Therefore we obtain a bound for \eqref{rec-conv}
\[
\begin{aligned}
\sum_{\substack{ i_1,\ldots,i_p \in \{0,...,n+1\} \\ i_1 +\cdots+i_p = n+1} } a_{i_1,p}\cdots a_{i_p,p} & \leq \frac{p+1}{n+1} \sum_{i=1}^{n-1} i a_{i,p} \sum_{\substack{ i_1,\ldots,i_p \in \{0,...,n+1-i\} \\ i_1 +\cdots+i_p = n+1-i} } a_{i_1,p}\cdots a_{i_p,p}\\
 &\leq \frac{p+1}{(p+2)^{n}(n+1)}\sum_{i=1}^{n+1} i \frac{1}{(p+1)i((p+1)i+1)}\\
 &= \frac{1}{(p+2)^n(n+1)} \sum_{i=1}^{n+1} \frac{1}{(p+1)i+1}.
\end{aligned}
\]
Finally one concludes \eqref{eq:cota-conv-anp} using that
\[
\frac{1}{n+1} \sum_{i=1}^{n+1} \frac{1}{(p+1)i+1} \leq \frac{1}{(p+2)(n+1)} \sum_{i=1}^{n+1} 1 = \frac{1}{p+2},
\]
which implies
\[
\sum_{\substack{ i_1,\ldots,i_p \in \{0,...,n+1\} \\ i_1 +\cdots+i_p = n+1} } a_{i_1,p}\cdots a_{i_p,p} \leq \frac{1}{(p+2)^{n+1}}.
\]
The proof of \eqref{eq:cota-conv-bnp} is similar given the definition of $b_{n,p}$ in \eqref{eq:rec-bnp} and the recurrence \eqref{rec-conv} for $b_{n,p}$.
\end{proof}

\begin{cor}\label{cor:cotas-rec}
For all $p \geq 2$, $a_{0,p}= b_{0,p} = 1$ and for all $n \geq 1$
\[
a_{n,p} \leq \frac{1}{(p+1)n}\frac{1}{(p+2)^{n}},
\]
and
\[
b_{n,p} \leq \frac{1}{(p+1)n} \frac{1}{(2(2p+1)(2p+3))^n}.
\]
\end{cor}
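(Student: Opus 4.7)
The plan is to deduce Corollary \ref{cor:cotas-rec} directly from the recurrence relations \eqref{eq:rec-anp}, \eqref{eq:rec-bnp} together with the convolution bounds established in Lemma \ref{lem:cotas-conv}. The initial cases $a_{0,p}=b_{0,p}=1$ are already built into the definitions, so the work consists entirely in verifying the claimed single-term bounds for $n\geq 1$.

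For the sequence $a_{n,p}$, I would begin by invoking \eqref{eq:rec-anp} to write
\[
a_{n,p} = \frac{1}{(p+1)n((p+1)n+1)}\sum_{\substack{i_1,\ldots,i_p\in\{0,\ldots,n-1\}\\ i_1+\cdots+i_p=n-1}} a_{i_1,p}\cdots a_{i_p,p},
\]
and then apply \eqref{eq:cota-conv-anp} at level $n-1$ to bound the convolution sum by $(p+2)^{-(n-1)}$. The remaining inequality
\[
\frac{1}{(p+1)n((p+1)n+1)}\cdot\frac{1}{(p+2)^{n-1}}\leq \frac{1}{(p+1)n}\cdot\frac{1}{(p+2)^n}
\]
reduces, after cancelling the common factor $(p+1)n$, to $p+2\leq (p+1)n+1$, which is equivalent to $n\geq 1$ and therefore holds in the stated range.

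The estimate for $b_{n,p}$ follows the same scheme. Using \eqref{eq:rec-bnp} and \eqref{eq:cota-conv-bnp} at level $n-1$ yields
\[
b_{n,p}\leq \frac{1}{(2p+1)n\bigl((2p+1)n+1\bigr)\bigl((2p+1)n+2\bigr)}\cdot \frac{1}{\bigl(2(2p+1)(2p+3)\bigr)^{n-1}}.
\]
To match the claimed bound it then suffices to verify
\[
\frac{2(p+1)(2p+3)}{\bigl((2p+1)n+1\bigr)\bigl((2p+1)n+2\bigr)} \leq 1,
\]
which I would establish by noting that $n\geq 1$ forces $(2p+1)n+1\geq 2p+2=2(p+1)$ and $(2p+1)n+2\geq 2p+3$, giving the required pointwise comparison (with equality exactly at $n=1$).

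No deep obstacle arises; the only delicate point is keeping the arithmetic clean, in particular correctly identifying that one must apply Lemma \ref{lem:cotas-conv} at the index $n-1$ (not $n$) so that the $(p+2)$-factor in the final estimate comes from combining the extra factor $((p+1)n+1)^{-1}\leq (p+2)^{-1}$ supplied by the recurrence with the bound on the convolution, and analogously for $b_{n,p}$.
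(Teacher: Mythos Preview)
Your proposal is correct and matches the paper's approach exactly: the paper simply states that the corollary ``comes from Lemma \ref{lem:cotas-conv} and the definition of $a_{n,p}$ and $b_{n,p}$ in \eqref{eq:rec-anp} and \eqref{eq:rec-bnp}, respectively,'' and your argument is precisely the unpacking of that sentence, including the correct application of the convolution bound at level $n-1$ and the elementary inequalities $(p+1)n+1\geq p+2$ and $((2p+1)n+1)((2p+1)n+2)\geq 2(p+1)(2p+3)$ for $n\geq 1$.
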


\begin{proof}
Corollary \ref{cor:cotas-rec} comes from Lemma \ref{lem:cotas-conv} and the definition of $a_{n,p}$ and $b_{n,p}$ in \eqref{eq:rec-anp} and \eqref{eq:rec-bnp}, respectively.
\end{proof}


\subsection{Theorem for general branches} Now we are ready to conclude the proof of Theorem \ref{MT1} in the case of a $p$ power nonlinearity. {\color{black} Suppose that $f,c$ are bounded functions satisfying
\[
\max\{\norm{f}_{\infty},\norm{c}_{\infty}\} < \frac{1}{2T} \left(\frac{\lambda(2p+1)(2p+3)}{T\left(e^{\lambda T(p-1)}-1\right)}\right)^{\frac 1{2p}}.
\]
See Assumptions \ref{ass2} $(e)$ will be used now. Notice that the above is a bound telling us that $f$ and $c$ cannot be too large compared with a fixed time $T$. Or, if $f$ and $c$ are arbitrary in size, $T$ is sufficiently small.  
}

\medskip
\noindent
{\bf Step 1:} Recall \eqref{eq:u-p}
\[
U(t,x) = \E\left[\prod_{i=1}^{(p-1)N_t^p+1} {\bf 1}_{\{T_{k_i}^t\geq t\}}\frac{\Delta T_{k_i}^t}{\overline \rho(\Delta T_{k_i}^t)}f\left(X_{T_{k_i}^t}^{k_i}\right)\prod_{i=1}^{N_t^p} {\bf 1}_{\{T_{\bar k_i}^t < t\}} \frac{\Delta T_{\bar k_i}^t}{\rho(\Delta T_{\bar k_i}^t)}c\left(t-T_{\bar k_i}^t,X_{T_{\bar k_i}^t}^{\bar k_i}\right)\right].
\]
{\color{black}For $\delta \in \left(0,\frac 12\right)$, $\delta <  \frac{1}{2T} \left(\frac{\lambda(2p+1)(2p+3)}{T\left(e^{\lambda T(p-1)}-1\right)}\right)^{\frac 1{2p}}$ to be chosen later,} define
\begin{equation}\label{def_v_2}
\begin{aligned}
&v(t,x) \\
& = \E\left[\prod_{i=1}^{(p-1)N_t^p+1} {\bf 1}_{\{T_{k_i}^t\geq t\}}\frac{\Delta T_{k_i}^t}{\overline \rho(\Delta T_{k_i}^t)}\mathcal R(\Phi_{f,d,\delta})\left(X_{T_{k_i}^t}^{k_i}\right)\prod_{i=1}^{N_t^p} {\bf 1}_{\{T_{\bar k_i}^t < t\}} \frac{\Delta T_{\bar k_i}^t}{\rho(\Delta T_{\bar k_i}^t)}\mathcal R(\Phi_{c,t,d,\delta})\left(t-T_{\bar k_i}^t,X_{T_{\bar k_i}^t}^{\bar k_i}\right)\right].
\end{aligned}
\end{equation}
As in the previous section, we want to bound $|U(t,x) - v(t,x)|$. For this, define the following quantities:
\begin{enumerate}
\item $\displaystyle\xi_{n}^p(t,x) = \prod_{i=1}^{(p-1)n+1} f\left(X_{T_{k_i}^t}^t\right) \prod_{i=1}^{n} c\left(t - T_{\bar k_i}^t, X_{T_{\bar k_i}^t}^{\bar k_i}\right)$,
\item $\displaystyle\overline \xi_{n}^p(t,x) = \prod_{i=1}^{(p-1)n+1} \mathcal R(\Phi_{f,d,\delta})\left(X_{T_{k_i}^t}^t\right) \prod_{i=1}^{n} \mathcal R(\Phi_{c,t,d,\delta})\left(t - T_{\bar k_i}^t, X_{T_{\bar k_i}^t}^{\bar k_i}\right)$.
\end{enumerate}

First of all, from \eqref{eq:u-p} and \eqref{def_v_2},
\[
\begin{aligned}
& U(t,x)-v(t,x) \\
& = \E\Bigg[\prod_{i=1}^{(p-1)N^p_t+1}{\bf 1}_{\{T_{k_i}^t \geq t\}}\frac{\Delta T_{k_i}^t}{\overline \rho(\Delta T_{k_i}^t)} \prod_{i=1}^{N_t^p} {\bf 1}_{\{T_{\bar k_i}^t < t\}} \frac{\Delta T_{\bar k_i}^t}{\rho(\Delta T_{\bar k_i}^t)} \\
& \hspace{0.5cm} \times\Bigg( \prod_{i=1}^{(p-1)N_t^p+1} f\left(X_{T_{k_i}^t}^{k_i}\right)  \prod_{i=1}^{N_t} c\left(t-T_{\bar k_i}^t,X_{T_{\bar k_i}^t}^{\bar k_i}\right) - \prod_{i=1}^{(p-1)N_t^p+1} \mathcal R(\Phi_{f,d,\delta})\left(X_{T_{k_i}^t}^{k_i}\right) \prod_{i=1}^{N_t}  \mathcal R(\Phi_{c,t,d,\delta})\left(t-T_{\bar k_i}^t,X_{T_{\bar k_i}^t}^{\bar k_i}\right) \Bigg)\Bigg]\\
&= \E\Bigg[\prod_{i=1}^{(p-1)N_t^p+1}{\bf 1}_{\{T_{k_i}^t \geq t\}}\frac{\Delta T_{k_i}^t}{\overline \rho(\Delta T_{k_i}^t)} \prod_{i=1}^{N_t} {\bf 1}_{\{T_{\bar k_i}^t < t\}} \frac{\Delta T_{\bar k_i}^t}{\rho(\Delta T_{\bar k_i}^t)}  \left(\displaystyle\xi_{N^p_t}^p(t,x) -\displaystyle\overline \xi_{N^p_t}^p(t,x) \right) \Bigg].
\end{aligned}
\]
Then we have
\[
\begin{aligned}
&|U(t,x)-v(t,x)| \\
&\hspace{.5cm}\leq \sum_{n = 0}^{\infty} \E\left[\prod_{i=1}^{(p-1)n+1}{\bf 1}_{\{T_{k_i}^t \geq t\}}\frac{\Delta T_{k_i}^t}{\overline \rho(\Delta T_{k_i}^t)} \prod_{i=1}^{n} {\bf 1}_{\{T_{\bar k_i}^t < t\}} \frac{\Delta T_{\bar k_i}^t}{\rho(\Delta T_{\bar k_i}^t)} \left| \xi_n^p(t,x)-\overline \xi_n^p(t,x) \right|\right] \PP(N_t^p = n).
\end{aligned}
\]
Recall \eqref{eq:approx_f1} and \eqref{eq:approx_c1}. From Lemma \ref{lemma:mult_DNNs} applied with $k=pn+1$ and \eqref{ck-2}, we conclude that
\begin{equation}\label{chuuuuu}
\left| \xi_n^p(t,x) - \overline \xi_n^p(t,x) \right| \leq (pn+1) \overline B^{pn} \delta,
\end{equation}
with $\overline B:= 2\max\{\delta, \norm{f}_{\infty},\norm{c}_{\infty}\}$. {\color{black} Notice that
\begin{equation}\label{por_demostrar}
\overline B < \frac{1}{T} \left(\frac{\lambda (2p+1)(2p+3)}{T\left(e^{\lambda T(p-1)}-1\right)}\right)^{\frac 1{2p}} \leq \frac{1}{T} \left(\frac{p+2}{T\left(1-e^{-\lambda T(p-1)}\right)}\right)^{\frac 1{p}}.
\end{equation}
} 
Then, using the definition of $\mathcal I_{n,p}(t)$ in \eqref{def_I_np}, the probability measure of $N_t^p$ in \eqref{eq:prob-Ntp} and \eqref{chuuuuu} one has
\[
\begin{aligned}
|U(t,x) - v(t,x)| &\leq  \delta e^{-\lambda t} \sum_{n=0}^{\infty} \begin{pmatrix} - \frac 1{p-1} \\ n \end{pmatrix} \mathcal I_{n,p}(t) \overline B^{pn}(pn+1) (-1)^n \left(1-e^{-\lambda t(p-1)}\right)^{n}.
\end{aligned}
\]
Moreover, \eqref{eq:Inp} and Corollary \ref{cor:cotas-rec} implies that
\[
\begin{aligned}
|U(t,x) - v(t,x)| &\leq \delta e^{-\lambda t} t \left(1+\sum_{n=1}^{\infty} \begin{pmatrix} - \frac 1{p-1} \\ n \end{pmatrix} \frac{t^{(p+1)n}}{(p+1)n}\frac{1}{(p+2)^n} \overline B^{pn}(pn+1) (-1)^n\left(1-e^{-\lambda t(p-1)}\right)^{n}\right) \\
&\leq \delta e^{-\lambda t} t \left(1+\sum_{n=1}^{\infty} \begin{pmatrix} - \frac 1{p-1} \\ n \end{pmatrix} \left( \frac{t^{p+1}\overline B^p \left(e^{-\lambda t(p-1)}-1 \right)}{p+2} \right)^n\right)\\
&= \delta e^{-\lambda t} t \sum_{n=0}^{\infty} \begin{pmatrix} - \frac 1{p-1} \\ n \end{pmatrix} \left( \frac{t^{p+1}\overline B^p \left(e^{-\lambda t(p-1)}-1\right)}{p+2} \right)^n.
\end{aligned}
\]
It follows from the bound for $\overline B$ in \eqref{por_demostrar} that for all $t \in [0,T]$
\[
\frac{t^{p+1}\overline B^p \left(1-e^{-\lambda t(p-1)}\right)}{p+2} < 1,
\]
which implies
\[
\sum_{n=0}^{\infty} \begin{pmatrix} - \frac 1{p-1} \\ n \end{pmatrix} \left( \frac{t^{p+1}\overline B^p (e^{-\lambda t(p-1)}-1)}{p+2} \right)^n = \left(1 - \frac{t^{p+1}\overline B^p \left(1-e^{-\lambda t(p-1)}\right)}{p+2} \right)^{-\frac{	1}{p-1}}.
\]
Therefore
\[
|U(t,x) - v(t,x)| \leq \delta e^{-\lambda t} t  \left(1 - \frac{t^{p+1}\overline B^p \left(1-e^{-\lambda t(p-1)}\right)}{p+2} \right)^{-\frac{	1}{p-1}}.
\]

\medskip
\noindent
{\bf Step 2:} For $n \in \N$, we want to approximate $\overline \xi_n^p(t,x)$ by a DNN. Notice that for any $k_i \in K_t$, $\bar k_i \in \overline K_t \setminus K_t$
\[
\left|\mathcal R(\Phi_{f,d,\delta})\left(X_{T_{k_i}^t}^{k_i}\right)\right| \leq \overline B, \hspace{.5cm} \hbox{and} \hspace{.5cm} \left|\mathcal R(\Phi_{c,t,d,\delta})\left(t - T_{\overline k_i}^t, X_{T_{\bar k_i}^t}^{\bar k_i}\right)\right| \leq \overline B.
\]
Therefore from Lemma \ref{lemma:DNN_mult_k} applied for $k = pn + 1$, for all $\gamma \in \left(0,\frac 12\right)$ there exists $\Psi_{\overline B,\gamma}^{n,p} \in {\bf N}$ such that
\[
\left| \overline \xi_n^p(t,x) - \mathcal R\left(\Psi_{\overline B,\gamma}^{n,p}\right)(x)\right| \leq \gamma pn \overline B^{pn+1}.
\]
Define $\Gamma \in {\bf N}$ by its realization
\[
\mathcal R(\Gamma)(x) := \prod_{i=1}^{(p-1)N_t^p+1} {\bf 1}_{\{T_{k_i}^t\geq t\}}\frac{\Delta T_{k_i}^t}{\overline \rho(\Delta T_{k_i}^t)}\prod_{i=1}^{N_t^p} {\bf 1}_{\{T_{\bar k_i}^t < t\}} \frac{\Delta T_{\bar k_i}^t}{\rho(\Delta T_{\bar k_i}^t)}\mathcal R\left(\Psi_{\overline B,\gamma}^{N_t^p,p}\right)(x).
\] 
This DNN satisfies
\[
\begin{aligned}
\big| v(t,x) - \E[\mathcal R(\Gamma)(x)] \big| &\leq \gamma p \sum_{n=0}^{\infty} \mathcal I_{n,p}(t)  n \overline B^{pn+1} \PP(N_t^p = n) \\
&\leq \gamma e^{-\lambda t} t \overline B\sum_{n=0}^{\infty} \begin{pmatrix} - \frac 1{p-1} \\ n \end{pmatrix} \left( \frac{t^{p+1}\overline B^p \left(e^{-\lambda t(p-1)}-1\right)}{p+2} \right)^n.
\end{aligned}
\]
This bound comes from the same arguments used in Step 1. Finally, one has that
\[
\big| v(t,x) - \E[\mathcal R(\Gamma)(x)] \big| \leq \gamma e^{-\lambda t} t\overline B \left(1 - \frac{t^{p+1}\overline B^p \left(1-e^{-\lambda t(p-1)}\right)}{p+2} \right)^{-\frac{	1}{p-1}}.
\]

\medskip
\noindent
{\bf Step 3:} Notice that $\Gamma$ is a DNN that involves the random variables $T_k^t$ and the corresponding $X_{T_k^t}^k$.  denote by $\Gamma_\ell$ the DNN with the same structure than $\Gamma$ but that takes $T_{k,\ell}^t$ and $X_{T_{k,\ell}^t,\ell}^k$, independent copies of $T_k^t$ and $X_{T_k^t}^k$. Let $M \in \N$. We want to establish the bound
\[
\E\left[\left|\E[\mathcal R(\Gamma)(x)] - \frac 1M \sum_{\ell=1}^{M} \mathcal R(\Gamma_{\ell})(x) \right|^2\right]^{\frac 12} \leq \frac{1}{\sqrt M} \E\left[\mathcal R(\Gamma)(x)^2\right]^{\frac 12} .
\]
By the definition of $\Gamma$ we have
\[
\begin{aligned}
& \E\left[\mathcal R(\Gamma)(x)^2\right] \\
&\hspace{.5cm}= \sum_{n=0}^{\infty} \E\left[\left(\prod_{i=1}^{n+1} {\bf 1}_{\{T_{k_i}^t \geq t\}}\frac{\Delta T_{k_i}^t}{\overline \rho(\Delta T_{k_i}^t)} \prod_{j=1}^{n} {\bf 1}_{\{T_{k_j}^t < t\}} \frac{\Delta T_{k_j}^t}{\rho \Delta T_{k_j}^t}\right)^2 \mathcal R\left(\Psi_{\overline B,\gamma}^{n,p}\right)(x)^2\right]\PP(N_t^p = n) \\
&\hspace{.5cm}\leq \sum_{n=0}^{\infty} \begin{pmatrix} - \frac{1}{p-1} \\ n \end{pmatrix}\mathcal J_{n,p}(t)\left(pn+1\right)^2 \overline B^{2(pn+1)} (-1)^ne^{-\lambda t}\left(1-e^{-\lambda t(p-1)}\right)^n.
\end{aligned}
\]
It follows from \eqref{eq:Jnp} and \eqref{eq:cota-conv-bnp} that
\[
\begin{aligned}
&\E\left[\mathcal R(\Gamma)(x)^2\right] \\
&\leq J_{0,p}(t)\overline B^2 e^{-\lambda t}+ t^2\overline B^2\sum_{n=1}^{\infty} \begin{pmatrix} -\frac{1}{p-1} \\ n\end{pmatrix}\left(\frac{t^{2p+1}\overline B^{2p}\left(1-e^{\lambda t(p-1)}\right)}{\lambda (2p+1)(2p+3)}\right)^n \frac{\left(pn+1\right)^2}{n(p+1)}\\
&\leq t^2 \overline B^2 + t^2 \overline B^2 \sum_{n=1}^{\infty}\begin{pmatrix} -\frac{1}{p-1} \\ n\end{pmatrix}\left(\frac{t^{2p+1}\overline B^{2p}\left(1-e^{\lambda t(p-1)}\right)}{\lambda (2p+1)(2p+3)}\right)^n (pn+1)\\
&= t^2 \overline B^2\sum_{n=0}^{\infty} \begin{pmatrix} -\frac{1}{p-1} \\ n\end{pmatrix}\left(\frac{t^{2p+1}\overline B^{2p}\left(1-e^{\lambda t(p-1)}\right)}{\lambda (2p+1)(2p+3)}\right)^n (pn+1).
\end{aligned}
\]
Recall \eqref{por_demostrar}. Then for all $t \in [0,T]$
\[
\left|\frac{t^{2p+1}\overline B^{2p}\left(1-e^{\lambda t(p-1)}\right)}{\lambda (2p+1)(2p+3)}\right| < 1,
\]
and therefore
\[
\sum_{n=0}^{\infty} \begin{pmatrix} -\frac{1}{p-1} \\ n\end{pmatrix}\left(\frac{t^{2p+1}\overline B^{2p}(1-e^{\lambda t(p-1)})}{\lambda (2p+1)(2p+3)}\right)^n = \left( 1 -  \frac{t^{2p+1}\overline B^{2p}\left(e^{\lambda t(p-1)}-1\right)}{\lambda (2p+1)(2p+3)}\right)^{-\frac{1}{p-1}}.
\]
In addition,
\[
\begin{aligned}
&\sum_{n=0}^{\infty} \begin{pmatrix} -\frac{1}{p-1} \\ n\end{pmatrix}\left(\frac{t^{2p+1}\overline B^{2p}\left(1-e^{\lambda t(p-1)}\right)}{\lambda (2p+1)(2p+3)}\right)^n n \\
&= \frac{1}{p-1}\left(\frac{t^{2p+1}\overline B^{2p}\left(e^{\lambda t(p-1)}-1\right)}{\lambda (2p+1)(2p+3)}\right)\sum_{n=0}^{\infty} \begin{pmatrix} -\frac{p}{p-1} \\ n\end{pmatrix}\left(\frac{t^{2p+1}\overline B^{2p}\left(1-e^{\lambda t(p-1)}\right)}{\lambda (2p+1)(2p+3)}\right)^n\\
&=\frac{1}{p-1}\left(\frac{t^{2p+1}\overline B^{2p}\left(e^{\lambda t(p-1)}-1\right)}{\lambda (2p+1)(2p+3)}\right)\left( 1 -  \frac{t^{2p+1}\overline B^{2p}\left(e^{\lambda t(p-1)}-1\right)}{\lambda (2p+1)(2p+3)}\right)^{-\frac{p}{p-1}}.
\end{aligned}
\]
Therefore
\[
\begin{aligned}
&\E\left[\mathcal R(\Gamma)(x)^2\right]\\
& \leq t^2\overline B^2 \left( 1 -  \frac{t^{2p+1}\overline B^{2p}\left(e^{\lambda t(p-1)}-1\right)}{\lambda (2p+1)(2p+3)}\right)^{-\frac{1}{p-1}} \\
&\hspace{.3cm}+ \frac{p}{p-1}t^2\overline B^2\left(\frac{t^{2p+1}\overline B^{2p}\left(e^{\lambda t(p-1)}-1\right)}{\lambda (2p+1)(2p+3)}\right)\left( 1 -  \frac{t^{2p+1}\overline B^{2p}\left(e^{\lambda t(p-1)}-1\right)}{\lambda (2p+1)(2p+3)}\right)^{-\frac{p}{p-1}}.
\end{aligned}
\]

\medskip
\noindent
{\bf Step 4:} By combining the results obtained in previous steps implies
\[
\begin{aligned}
&\E\left[\left| U(t,x) - \frac 1M \sum_{\ell=1}^{M} \mathcal R(\Gamma_{\ell})(x) \right|^2\right]^{\frac{1}{2}} \\
&\hspace{.5cm}\leq \left|U(t,x) - v(t,x)\right| + |v(t,x) - \E\left[\mathcal R(\Gamma)(x)\right]|+ \E\left[\left| \E\left[\mathcal R(\Gamma)(x)\right] - \frac 1M \sum_{\ell = 1}^{M} \mathcal R(\Gamma_{\ell})(x)\right|^2\right]^{\frac 12} \\
&\hspace{.5cm}\leq e^{-\lambda t} t\left(\delta + \gamma \overline B\right) \left(1- \frac{t^{p+1}\overline B^p \left(1-e^{-\lambda t(p-1)}\right)}{p+2}\right)^{-\frac 1{p-1}} \\
&\hspace{1cm}+ \frac{t\overline B}{\sqrt M} \left(\left(1- \frac{t^{2p+1}\overline B^{2p} \left(e^{\lambda t(p-1)}-1\right)}{\lambda (2p+1)(2p+3)}\right)^{-\frac 1{p-1}} \right.\\
&\hspace{2cm}\left.+ \frac{p}{p-1}\left(\frac{t^{2p+1}\overline B^{2p}\left(e^{\lambda t(p-1)}-1\right)}{\lambda (2p+1)(2p+3)}\right)\left( 1 -  \frac{t^{2p+1}\overline B^{2p}\left(e^{\lambda t(p-1)}-1\right)}{\lambda (2p+1)(2p+3)}\right)^{-\frac{p}{p-1}}\right)^{\frac{1}{2}}.
\end{aligned}
\]

\medskip
\noindent
{\bf Step 5:} From Lemma \ref{lem:moments-Ntp} it follows that
\[
\begin{aligned}
\E\left[\left|\E[N_t]-\frac 1M \sum_{i=1}^M N_{t,\ell}\right|^2\right]^{\frac 12} &= \frac{1}{\sqrt M} (E[N_t^2] - E[N_t]^2)^{\frac 12} \\
&= \frac{1}{\sqrt M} \left(\frac{e^{\lambda t(p-1)}-1}{p-1} + p \left(\frac{e^{\lambda t(p-1)}-1}{p-1}\right)^{2} - \left(\frac{e^{\lambda t(p-1)}-1}{p-1}\right)^2\right)^{\frac 12} \\
&= \frac{1}{\sqrt M} \left(\frac{e^{\lambda t(p-1)}-1}{p-1}e^{\lambda t(p-1)}\right)^{\frac 12}.
\end{aligned}
\]
{\color{black}
\medskip
\noindent
{\bf Step 6:} Let us choose $\gamma = \delta$, $M = \lceil \delta^{-2}\rceil$, and
\[
\delta = \frac{\varepsilon}{2} \min\left\{1,\frac{1}{T}\left(\frac{\lambda(2p+1)(2p+3)}{T\left(e^{\lambda T(p-1)}-1\right)}\right)^{\frac 1{2p}},\frac{2}{C(T)}\right\} \in \left(0,\frac{1}{2}\right),
\]
where
\[
\begin{aligned}
C(T):=\max_{t \in [0,T]} &\left\{\sqrt{2}e^{-\lambda t}t(1+\overline B)\left(1-\frac{t^{p+1}\overline B^p \left(1-e^{-\lambda t(p-1)}\right)}{p+2}\right)^{-\frac1{p-1}} + \left(\frac{e^{\lambda t(p-1)}-1}{p-1}e^{\lambda t(p-1)}\right)^{\frac 12}\right. \\
&\hspace{.2cm}+ \sqrt{2}t\overline B \left(\left(1- \frac{t^{2p+1}\overline B^{2p} \left(e^{\lambda t(p-1)}-1\right)}{\lambda (2p+1)(2p+3)}\right)^{-\frac 1{p-1}} \right.\\
&\hspace{1cm}\left.\left.+ \frac{p}{p-1}\left(\frac{t^{2p+1}\overline B^{2p}\left(e^{\lambda t(p-1)}-1\right)}{\lambda (2p+1)(2p+3)}\right)\left( 1 -  \frac{t^{2p+1}\overline B^{2p}\left(e^{\lambda t(p-1)}-1\right)}{\lambda (2p+1)(2p+3)}\right)^{-\frac{p}{p-1}}\right)^{\frac{1}{2}}\right\}. 
\end{aligned}
\]
Therefore,
\[
\sup_{|x|\leq t} \left| U(t,x) - \frac 1M \sum_{\ell = 1}^{M} \mathcal R(\Gamma_{\ell})(x)\right|+ \left(\frac{1}{|B(0,t)|} \int_{B(0,t)} \left| U(t,x) - \frac 1M \sum_{\ell = 1}^{M} \mathcal R(\Gamma_{\ell})(x)\right|^2dx\right)^{\frac 12} \leq \varepsilon. 
\]
Moreover
\[
\begin{aligned}
\left| \sum_{\ell = 1}^{M} \overline N_{t,\ell}^p\right| &\leq M\left(\left|\E[N_t^p]-\frac{1}{M} \sum_{i=1}^{M} \overline N_{t,\ell}^{p}\right| + \E[N_t^p]\right) \\
&\leq M \left({\bf error}_{u} + \E\left[N_t^p\right]\right) \leq M \left(1 + \frac{e^{\lambda t(p-1)}-1}{p-1} \right).
\end{aligned}
\]

\subsection{Computation of DNN parameters}
In this section we will show that 
\[
\frac{1}{M} \sum_{\ell=1}^{M} \mathcal R(\Gamma_{\ell})(\cdot),
\]
is the realization of some DNN $\Phi_{d,t,\varepsilon} \in {\bf N}$. For any $\ell \in \{1,\ldots,M\}$, $k_{i,\ell} \in K_{t,\ell}$, $\bar k_{i,\ell} \in \overline K_{t,\ell} \setminus K_{t,\ell}$ define
\[
\mathcal R(\phi_{i,\ell})(x) = \mathcal R\left(\Phi_{f,d,\delta}\right)\left(x + X_{T^t_{k_{i,\ell},\ell},\ell}^{k_{i,\ell}}\right), \hspace{.5cm} i \in \left\{1,\ldots,(p-1)\overline N_{t,\ell}^p+1\right\},
\]
and
\[
\mathcal R(\overline \phi_{i,\ell})(x) = \mathcal R \left(\Phi_{c,t,d,\delta}\right)\left(t-T_{\bar k_{i,\ell}}^t, x + X_{T^t_{\bar k_{i,\ell},\ell},\ell}^{\bar k_{i,\ell}}\right), \hspace{.5cm} i \in \left\{1,\ldots,\overline N_{t,\ell}^{p}\right\}.
\]
Lemmas \ref{lemma:DNN_affine} and \ref{cor:DNN_fixed_comp} implies for all $\ell \in \{1,\ldots,M\}$
\[
\mathcal P(\phi_{i,\ell}) = \mathcal P\left(\Phi_{f,d,\delta}\right), \hspace{.5cm} \mathcal H(\phi_{i,\ell}) = \mathcal H\left(\Phi_{f,d,\delta}\right), \hspace{.5cm} \hbox{for all} \hspace{.5cm} i \in \{1,\ldots,(p-1)\overline N_{t,\ell}^{p}+1\},
\]
and
\[
\mathcal P(\overline \phi_{i,\ell}) \leq 2 \mathcal P\left(\Phi_{c,t,d,\delta}\right) + 4(2d+1), \hspace{.5cm} \mathcal H(\overline \phi_{i,\ell}) = \mathcal H\left(\Phi_{c,t,d,\delta}\right) + 2, \hspace{.5cm} \hbox{for all} \hspace{.5cm} i \in \{1,\ldots,\overline N_{t,\ell}^{p}\}.
\]
Notice that for all $\ell \in \{1,\ldots,M\}$ may not necessarily have the same number of hidden layers, then we will separate in three cases:
\begin{enumerate}
\item If $\mathcal H\left(\Phi_{f,d,\delta}\right) = \mathcal H\left(\Phi_{c,t,d,\delta}\right) + 2$, we can parallelize the DNNs $\phi_{i,\ell}$, $\overline \phi_{j,\ell}$, $i \in \{1,\ldots,(p-1)\overline N_{t,\ell}^p + 1\}$, $j \in \{1,\ldots,\overline N_{t,\ell}^p\}$ without any problem.

\item If $\mathcal H\left(\Phi_{f,d,\delta}\right) < \mathcal H\left(\Phi_{c,t,d,\delta}\right) + 2$, we extend each $\phi_{i,\ell}$, $i \in \{1,\ldots,(p-1)\overline N_{t,\ell}^p+1\}$ to have $\mathcal H\left(\Phi_{c,t,d,\delta}\right) + 2$ hidden layers. In this case, Lemma \ref{lem:DNN_extension} implies that $\mathcal H\left(\phi_{i,\ell}\right) =\mathcal H\left(\Phi_{c,t,d,\delta}\right) + 2$ and
\[
\mathcal P\left(\phi_{i,\ell}\right) \leq 2\mathcal P\left(\Phi_{f,d,\delta}\right) + 4\left(\mathcal H\left(\Phi_{c,t,d,\delta}\right) + 2 - \mathcal H\left(\Phi_{f,d,\delta}\right) \right).
\]
\item If $\mathcal H\left(\Phi_{c,t,d,\delta}\right) + 2 < \mathcal H \left(\Phi_{f,d,\delta}\right)$, we extend each $\overline \phi_{i,\ell}$, $i \in \{1,\ldots,\overline N_{t,\ell}^p\}$ to have $\mathcal H\left(\Phi_{f,d,\delta}\right)$ hidden layers. Lemma \ref{lem:DNN_extension} implies that  $\mathcal H\left(\overline \phi_{i,\ell}\right) =\mathcal H\left(\Phi_{f,d,\delta}\right)$ and
\[
\mathcal P\left(\overline \phi_{i,\ell}\right) \leq 4\mathcal P\left(\Phi_{c,t,d,\delta}\right) + 8(2d+1) + 4\left(\mathcal H\left(\Phi_{f,d,\delta}\right) - \mathcal H\left(\Phi_{c,t,d,\delta}\right) - 2\right).
\]
\end{enumerate}
In the three cases one has for $i \in \{1,\ldots,(p-1)\overline N_{t,\ell}^p + 1\}$, $j \in \{1,\ldots,\overline N_{t,\ell}^p\}$
\[
\mathcal H\left(\phi_{i,\ell}\right) = \mathcal H\left(\overline \phi_{j,\ell}\right) = \max\{\mathcal H\left(\Phi_{f,d,\delta}\right),\mathcal H\left(\Phi_{c,t,d,\delta}\right) + 2\},
\]
\[
\mathcal P\left(\phi_{i,\ell}\right) \leq 2\mathcal P\left(\Phi_{f,d,\delta}\right) + 4\max\{\mathcal H\left(\Phi_{f,d,\delta}\right),\mathcal H\left(\Phi_{c,t,d,\delta}\right) + 2\},
\]
and
\[
\mathcal P\left(\overline \phi_{j,\ell}\right) \leq 4\mathcal P\left(\Phi_{c,t,d,\delta}\right) + 8(2d+1)+ 4\max\{\mathcal H\left(\Phi_{f,d,\delta}\right),\mathcal H\left(\Phi_{c,t,d,\delta}\right) + 2\}.
\]

\medskip
For all $\ell \in \{1,\ldots,M\}$ define $\varphi_{\ell} \in {\bf N}$ by its realization
\[
\mathcal R(\varphi_{\ell})(x) := \left(\mathcal R\left(\phi_{1,\ell}\right)(x),\ldots,\mathcal R\left(\phi_{(p-1)\overline N_{t,\ell}^p + 1,\ell}\right)(x),\mathcal R\left(\overline \phi_{1,\ell}\right)(x),\ldots,\mathcal R\left(\overline \phi_{\overline N_{t,\ell}^p,\ell}\right)(x)\right).
\] 
Thanks to Lemma \ref{lemma:DNN_para}, $\varphi_{\ell}$ is indeed a DNN and $\mathcal R (\varphi_{\ell}) \in C\left(\R^{d},\R^{p\overline N_{t,\ell}^p + 1}\right)$. Moreover, $\varphi_{\ell}$ satisfies
\[
\begin{aligned}
\mathcal P\left(\varphi_{\ell}\right) &= \sum_{i=1}^{(p-1)\overline N_{t,\ell}^p + 1} \mathcal P\left(\phi_{i,\ell}\right) + \sum_{i=1}^{\overline N_{t,\ell}^p} \mathcal P\left(\overline \phi_{i,\ell}\right) \\
&\leq 2\left((p-1)\overline N_{t,\ell}^{p}+1\right)\mathcal P\left(\Phi_{f,d,\delta}\right) + 4 \overline N_{t,\ell}^p \mathcal P\left(\Phi_{c,t,d,\delta}\right) + 8(2d+1)\overline N_{t,\ell}^p \\
&\hspace{.5cm} + 4 \left(p\overline N_{t,\ell}^p + 1\right)\max\{\mathcal H\left(\Phi_{f,d,\delta}\right),\mathcal H\left(\Phi_{c,t,d,\delta}\right) + 2\},
\end{aligned}
\]
and
\[
\mathcal H(\varphi_{\ell}) = \max\left\{\mathcal H\left(\Phi_{f,d,\delta}\right), \mathcal H\left(\Phi_{c,t,d,\delta}\right)+2\right\}.
\]
On the other hand side, notice that the neural network $\Psi_{\overline B,\gamma,\ell}^{\overline N_{t,\ell}^p,p}$ is a composition between $\varphi_{\ell}$ and the DNN $\Phi_{\overline B,\gamma}^{p\overline N_{t,\ell}^p+1}$ found in Lemma \ref{lemma:DNN_mult_k}. Lemma \ref{lemma:DNN_comp} implies that 
\[
\mathcal P \left(\Psi_{\overline B,\gamma,\ell}^{\overline N_{t,\ell}^p,p}\right) \leq 2\mathcal P\left(\Phi_{\overline B,\gamma}^{p\overline N_{t,\ell}^p+1}\right) + 2 \mathcal P \left(\varphi_{\ell}\right),
\]
and
\[
\mathcal H \left(\Psi_{\overline B,\gamma,\ell}^{\overline N_{t,\ell}^p,p}\right) = \mathcal H\left(\Phi_{\overline B,\gamma}^{p\overline N_{t,\ell}^p+1}\right) + \mathcal H \left(\varphi_{\ell}\right) + 1.
\]
Here, for $\ell \in \{1,\ldots,M\}$, the DNNs $\Phi_{\overline B,\gamma}^{p\overline N_{t,\ell}^p+1}$ may not necessarily have the same number of hidden layers. Therefore from Lemma \ref{lem:DNN_extension} one can extend each DNN to have
\[
\max_{\ell = 1,\ldots,M} \mathcal H\left(\Phi_{\overline B,\gamma}^{p\overline N_{t,\ell}^p+1}\right)
\]
hidden layer, with each DNN satisfying
\[
\mathcal P\left(\Phi_{\overline B,\gamma}^{p\overline N_{t,\ell}^p+1}\right) \leq 2C' \left(p\overline N_{t,\ell}^p+1\right)^4 \left(\log \lceil \gamma^{-1}\rceil + 1 + \log\lceil R^{-1} \rceil \right) + 4\max_{\ell = 1,\ldots,M} \mathcal H\left(\Phi_{\overline B,\gamma}^{p\overline N_{t,\ell}^p+1}\right),
\]
and therefore, the DNNs $\Psi_{\overline B,\gamma,\ell}^{\overline N_{t,\ell}^p,p}$ will have the same number of hidden layers, equal to
\[
\mathcal H \left(\Psi_{\overline B,\gamma,\ell}^{\overline N_{t,\ell}^p,p}\right) = \max_{\ell = 1,\ldots,M}\mathcal H\left(\Phi_{\overline B,\gamma}^{p\overline N_{t,\ell}^p+1}\right) + \mathcal H \left(\varphi_{\ell}\right) + 1.
\]

Recall that for all $\ell \in \{1,\ldots,M\}$
\[
\mathcal R(\Gamma_{\ell})(x) = \prod_{i=1}^{(p-1) \overline N_{t,\ell}^p+1} {\bf 1}_{\{T_{k_i,\ell}^t\geq t\}}\frac{\Delta T_{k_i,\ell}^t}{\overline \rho(\Delta T_{k_i,\ell}^t)}\prod_{i=1}^{\overline N_{t,\ell}^p} {\bf 1}_{\{T_{\bar k_i,\ell}^t < t\}} \frac{\Delta T_{\bar k_i,\ell}^t}{\rho(\Delta T_{\bar k_i,\ell}^t)}\mathcal R\left(\Psi_{\overline B,\gamma,\ell}^{\overline N_{t,\ell}^p,p}\right)(x).
\]
By applying again Lemma \ref{lemma:DNN_affine}, one has that
\[
\mathcal P\left(\Gamma_{\ell}\right) = \mathcal P \left(\Psi_{\overline B,\gamma,\ell}^{\overline N_{t,\ell}^p,p}\right), \hspace{.5cm} \mathcal H\left(\Gamma_{\ell}\right) = \mathcal H \left(\Psi_{\overline B,\gamma,\ell}^{\overline N_{t,\ell}^p,p}\right).
\]
Finally, define $\Phi_{t,d,\varepsilon}$ the DNN with realization
\[
\mathcal R\left(\Phi_{t,d,\varepsilon}\right)(x) = \frac{1}{M} \sum_{\ell=1}^{M} \mathcal R\left(\Gamma_{\ell}\right)(x).
\]
We conclude:
\[
\sup_{|x|\leq t} \left| U(t,x) - \mathcal R\left(\Phi_{t,d,\varepsilon}\right)(x) \right|+ \left(\frac{1}{|B(0,t)|} \int_{B(0,t)} \left| U(t,x) -\mathcal R\left(\Phi_{t,d,\varepsilon}\right)(x)  \right|^2dx\right)^{\frac 12} \leq \varepsilon,
\]
proving \eqref{eq:main_eps_nl} in this case. From Lemma \ref{lemma:DNN_suma}, $\Phi_{t,d,\varepsilon}$ satisfies
\[
\mathcal P\left(\Phi_{t,d,\varepsilon}\right)(x) \leq \sum_{\ell=1}^{M} \mathcal P\left(\Gamma_{\ell}\right), \hspace{.5cm} \mathcal H\left(\Phi_{t,d,\varepsilon}\right) = \mathcal H \left(\Psi_{\overline B,\gamma,\ell}^{\overline N_{t,\ell}^p,p}\right).
\]
By combining the bounds, one has
\[
\begin{aligned}
\sum_{\ell=1}^{M} \mathcal P\left(\Gamma_{\ell}\right) &\leq 4C' \left(\log \lceil \gamma^{-1}\rceil + 1 + \log\lceil \overline B^{-1} \rceil \right) \sum_{\ell=1}^M\left(p\overline N_{t,\ell}^p+1\right)^4 + 8 M \max_{\ell = 1,\ldots,M} \mathcal H\left(\Phi_{\overline B,\gamma}^{p\overline N_{t,\ell}^p+1}\right) \\
&\hspace{.5cm}+ 2\left((p-1)\sum_{\ell=1}^M\overline N_{t,\ell}^{p}+M\right)\mathcal P\left(\Phi_{f,d,\delta}\right) + 4 \sum_{\ell=1}^M\overline N_{t,\ell}^p \mathcal P\left(\Phi_{c,t,d,\delta}\right) + 8(2d+1)\sum_{\ell=1}^M\overline N_{t,\ell}^p \\
&\hspace{.5cm} + 4 \left(p\sum_{\ell=1}^M\overline N_{t,\ell}^p + M\right)\max\{\mathcal H\left(\Phi_{f,d,\delta}\right),\mathcal H\left(\Phi_{c,t,d,\delta}\right) + 2\}.
\end{aligned}
\]
We bound each quantity. First recall that
\[
\sum_{\ell=1}^{M} \overline N_{t,\ell}^{p} \leq M\left(1+ \frac{e^{\lambda t(p-1)}-1}{p-1}\right),
\]
therefore
\[
\begin{aligned}
\sum_{\ell=1}^{M} \left(p\overline N_{t,\ell}^p + 1\right)^4 &\leq \left(p\sum_{\ell=1}^{M} \overline N_{t,\ell}^{p} + M\right)^4 \\
&\leq \left(pM\left(1+ \frac{e^{\lambda t(p-1)}-1}{p-1}\right) + M\right)^4 \\
&= M^4 \left(p+1+p\frac{e^{\lambda t(p-1)}-1}{p-1}\right)^4.
\end{aligned}
\]
Additionally, for all $\ell \in \{1,\ldots,M\}$,
\[
 \mathcal H\left(\Phi_{\overline B,\gamma}^{p\overline N_{t,\ell}^p+1}\right) \leq C \left(p\overline N_{t,\ell}^p+1\right) \left( \log\lceil \gamma^{-1}\rceil + \log \left(p\overline N_{t,\ell}^p+1\right) + \left(p\overline N_{t,\ell}^p+1\right) \log \lceil \overline B^{-1} \rceil \right).
\]
Then,
\[
\begin{aligned}
\max_{\ell=1,\ldots,M} \mathcal H\left(\Phi_{\overline B,\gamma}^{p\overline N_{t,\ell}^p+1}\right) &\leq C\left(\log\lceil \gamma^{-1}\rceil + 1 +\log \lceil \overline B^{-1} \rceil \right) \sum_{\ell=1}^{M} \left(p\overline N_{t,\ell}^p+1\right)^2 \\
&\leq CM^2\left(\log\lceil \gamma^{-1}\rceil + 1 +\log \lceil \overline B^{-1} \rceil \right) \left(p+1+p\frac{e^{\lambda t(p-1)}-1}{p-1}\right)^2.
\end{aligned}
\]
This implies that
\[
\begin{aligned}
& \sum_{\ell=1}^{M} \mathcal P\left(\Gamma_{\ell}\right) \\
&\quad \leq  \widetilde C M \left( \gamma^{-1} M^3 + \gamma^{-1}M^2 + \mathcal P\left(\Phi_{f,d,\delta}\right) +\mathcal P\left(\Phi_{c,t,d,\delta}\right) + d + \max\{\mathcal H\left(\Phi_{f,d,\delta}\right),\mathcal H\left(\Phi_{c,t,d,\delta}\right) + 2\}\right),
\end{aligned}
\]
where $\widetilde C = \widetilde C\left(\lambda, T, p, \norm{f}_{\infty}, \norm{c}_{\infty}\right)$ is some positive constant. In addition, assumptions \eqref{eq:cota_param_phiu_nl} and \eqref{eq:dim_dnn_encontradas_nl} implies that
\[
\begin{aligned}
\sum_{\ell=1}^{M} \mathcal P\left(\Gamma_{\ell}\right) &\leq  \widetilde C M \left(\gamma^{-1} M^3 + \gamma^{-1}M^2 + 2Bd^p\delta^{-\alpha} + d + Bd^p\delta^{-\beta}+2\right),
\end{aligned}
\]
and from the choices of $M,\delta,\gamma$ it follows that
\[
\sum_{\ell=1}^{M} \mathcal P\left(\Gamma_{\ell}\right) \leq  2\widetilde C  \varepsilon^{-2} C(T)^2 \left(\varepsilon^{-7} 2^3C(T)^7 + 2^2 \varepsilon^{-5}C(T)^5 + 2Bd^p\varepsilon^{-\alpha}C(T)^{\alpha} + d + Bd^p\varepsilon^{-\beta}C(T)^{\beta}+2\right).
\]
Finally
\[
\sum_{\ell=1}^{M} \mathcal P\left(\Gamma_{\ell}\right) \leq Cd^p \varepsilon^{-\eta},
\]
with $\eta = 2 + \max\{\alpha,\beta,7\}$ and $C = C\left(\lambda, T, p, \norm{f}_{\infty}, \norm{c}_{\infty}\right)$ some positive constant.
}

}

\newpage

\bibliographystyle{alpha}

\begin{thebibliography}{VAK{\etalchar{+}}22}
	
 \bibitem[ACL24]{ACL24} Z. Aldirany, R. Cottereau, M. Laforest and S. Prudhomme. \emph{Operator approximation of the wave equation based on deep learning of Green's function}. Computers $\&$ Mathematics with Applications, 159, 21-30, 2024.

	
	\bibitem[BKM22]{BKM22} G. Bai, U. Koley, S. Mishra, and R. Molinaro, \emph{Physics informed Neural Networks (PINNs) for approximating nonlinear dispersive PDEs}, J. Comp. Math., 39 (2021), pp. 816--847 \url{10.4208/jcm.2101-m2020-0342}. 
	
	\bibitem[BJ19]{Beck2} C. Beck and A. Jentzen. \emph{Machine learning approximation algorithms for high-dimensional fully nonlinear partial differential equations and second-order backward stochastic differential equations}. Journal of Nonlinear Science, 29(4), 1563-1619, 2019.

	\bibitem[BBG21]{Beck1} C. Beck, S. Becker, P. Grohs, N. Jaafari and A. Jentzen. \emph{Solving the Kolmogorov PDE by Means of Deep Learning.} Journal of Scientific Computing, vol. 88, no. 3, July 2021. \url{https://doi.org/10.1007/s10915-021-01590-0}.

	\bibitem[BHH20]{Beck} C. Beck, F. Hornung, M. Hutzenthaler, A. Jentzen, and T. Kruse. \emph{Overcoming the curse of dimensionality in the numerical approximation of Allen–Cahn partial differential equations via truncated full-history recursive multilevel Picard approximations}. Journal of Numerical Mathematics, 28(4):197--222, dec 2020.
			
	\bibitem[BHJ23]{Beck23}	C. Beck, M. Hutzenthaler, A. Jentzen, and B. Kuckuck, \emph{An overview on deep learning-based approximation methods for partial differential equations}. Discrete Contin. Dyn. Syst. Ser. B 28 (2023), no. 6, 3697--3746.
		
	\bibitem[Ber20]{Berner1} J. Berner, P. Grohs and A. Jentzen. \emph{Analysis of the generalization error: Empirical risk minimization over deep artificial neural networks overcomes the curse of dimensionality in the numerical approximation of Black--Scholes partial differential equations.} SIAM J. Math. Data Science, 2(3), 631-657, 2020.
	
	\bibitem[Cas22]{Castro} J. Castro. \emph{Deep Learning schemes for parabolic nonlocal integro-differential equations}. Partial Differ. Equ. Appl. 3, no. 6, 77, 2022.
	
	\bibitem[Cas23]{Castro2} J. Castro, \emph{The Kolmogorov infinite dimensional equation in a Hilbert space via deep learning methods}. J. Math. Anal. Appl. 527 (2023), no. 2, Paper No. 127413, 40 pp.
	
	\bibitem[Cas24]{CalderonDO} J. Castro, C. Muñoz and N. Valenzuela. \emph{The Calderón's problem via DeepONets}. Vietnam J. Math. 2024 (Carlos Kenig's 70 anniversary). \url{https://doi.org/10.1007/s10013-023-00674-8}
	
	\bibitem[CC95]{Chen}  T. Chen and H. Chen. \emph{Universal approximation to nonlinear operators by neural networks with arbitrary activation functions and its application to dynamical systems}. IEEE Transactions on Neural Networks, 6 (4): 911--917, 1995.
	
	\bibitem[CJL19]{Cox} S. Cox, A. Jentzen, F. Lindner, \emph{Weak convergence rates for temporal numerical approximations of stochastic wave equations with multiplicative noise}, https://arxiv.org/abs/1901.05535, 2019, 51 pp.
	
	\bibitem[Pr85]{DePril} N. De Pril, \emph{Recursions for Convolutions of Arithmetic Distributions. ASTIN Bulletin}, 15(2), 135–139, 1985. doi:10.2143/AST.15.2.2015024
	
	\bibitem[RM22]{Mishra} T. De Ryck and S. Mishra. \emph{Error Analysis for Physics-Informed Neural Networks (PINNs) Approximating Kolmogorov PDEs.} Advances in Computational Mathematics, vol. 48, no. 6, Nov. 2022. \url{https://doi.org/10.1007/s10444-022-09985-9}.
	
	\bibitem[RJM22]{Mishra2} T. De Ryck, A. D. Jagtap and S. Mishra. \emph{Error estimates for physics informed neural networks approximating the Navier-Stokes equations.} arXic preprint arXiv:2203.09346, 2022.
		
	\bibitem[WE17]{E}  W. E, J. Han and A. Jentzen. \emph{Deep learning-based numerical methods for high-dimensional parabolic partial differential equations and backward stochastic differential equations.} Commun. Math. Stat. 5(4), 349–380, 2017.
			
\bibitem[Elb21]{Elb21} D. Elbrachter, D. Perekrestenko, P. Grohs and H. B\"olcskei. \textit{Deep Neural Network Approximation Theory.} IEEE Transactions on Information Theory, vol. 67, no. 5, May 2021, pp. 2581–623. Crossref, \url{https://doi.org/10.1109/tit.2021.3062161}.
	
\bibitem[GS21]{Gonnon} L. Gonon and C. Schwab. \emph{Deep ReLU neural networks overcome the curse of dimensionality for partial integrodifferential equations}. arXiv preprint arXiv:2102.11707, 2021.
	
\bibitem[GH21]{Grohs} P. Grohs and L. Herrmann. \emph{Deep neural network approximation for high-dimensional elliptic PDEs with boundary conditions}. IMA Journal of Numerical Analysis, 2021. \url{https://doi.org/10.1093/imanum/drab031}.
	
\bibitem[Gro23]{Gro23} P. Grohs, F. Hornung, A. Jentzen and P. Zimmermann. \textit{Space-Time Error Estimates for Deep Neural Network Approximations for Differential Equations.} Advances in Computational Mathematics, vol. 49, no. 1, Jan. 2023. \url{https://doi.org/10.1007/s10444-022-09970-2}.

	\bibitem[GHJ18]{Grohs2} P. Grohs, F. Hornung, A. Jentzen, and P. Von Wurstemberger. \emph{A proof that artificial neural networks overcome the curse of dimensionality in the numerical approximation of Black-Scholes partial differential equations}. arXiv preprint arXiv:1809.02362,  2018.
		
	\bibitem[GRP19]{Raissi0} M. Gulian,, M. Raissi, P. Perdikaris and G. Karniadakis. \emph{Machine Learning of Space-Fractional Differential Equations.} SIAM Journal on Scientific Computing, vol. 41, no. 4, Jan. 2019, pp. A2485–509. \url{https://doi.org/10.1137/18m1204991}.
	
	\bibitem[HJE18]{Han} J. Han, A. Jentzen and W. E. \emph{Solving High-Dimensional Partial Differential Equations Using Deep Learning.} Proceedings of the National Academy of Sciences, vol. 115, no. 34, Aug. 2018. \url{https://doi.org/10.1073/pnas.1718942115}.
	
	
	\bibitem[Hor91]{Hornik} K. Hornik. \emph{Approximation Capabilities of Multilayer Feedforward Networks}, Neural Networks, Vol. 4, pp. 251-257. 1991.
	
	\bibitem[HPW20]{Hure} C. Huré, H. Pham, and X. Warin. \emph{Deep backward schemes for high-dimensional nonlinear PDEs,} Math. Comp. 89, no. 324, 1547–1579, 2020.
	
	\bibitem[Hut20]{Hutz20} M. Hutzenthaler, A. Jentzen, T. Kruse and T. A. Nguyen. \textit{A Proof That Rectified Deep Neural Networks Overcome the Curse of Dimensionality in the Numerical Approximation of Semilinear Heat Equations.} SN Partial Differential Equations and Applications, vol. 1, no. 2, Apr. 2020. Crossref, \url{https://doi.org/10.1007/s42985-019-0006-9}.
		
	\bibitem[HJK20b]{Hutz2} M Hutzenthaler, A. Jentzen, T. Kruse, T. A. Nguyen, and P. Von Wurstemberger. \emph{Overcoming the curse of dimensionality in the numerical approximation of semilinear parabolic partial differential equations.} Proc. Royal Soc. A: Math. Phys. Eng. Sciences, 476 (2244):20190630, Dec. 2020.
			
\bibitem[HLT21]{HLT21} P. Henry-Labordère and N. Touzi. \textit{Branching diffusion representation for nonlinear Cauchy problems and Monte Carlo approximation.} The Annals of Applied Probability, vol. 31, no. 5. \url{https://doi.org/10.1214/20-aap1649} (2021).
	
	\bibitem[JKW23]{JKW} A. Jentzen, B. Kuckuck, P. V. Wurstemberger, \emph{Mathematical Introduction to Deep Learning: Methods, Implementations, and Theory}, preprint arXiv \url{https://arxiv.org/pdf/2310.20360} (2023).
	
		\bibitem[JSW21]{Jentz1} A. Jentzen, D. Salimova, and T. Welti. \emph{A proof that deep artificial neural networks overcome the curse of dimensionality in the numerical approximation of Kolmogorov partial differential equations with constant diffusion and nonlinear drift coefficients.} Comm. Math. Sciences, vol. 19, no. 5, 2021, pp. 1167–205. \url{https://doi.org/10.4310/cms.2021.v19.n5.a1}.

	
	\bibitem[LJK21]{Lu} L. Lu, P. Jin and G. Em Karniadakis. \emph{Learning Nonlinear Operators via DeepONet Based on the Universal Approximation Theorem of Operators.} Nature Machine Intelligence, vol. 3, no. 3, Mar. 2021. \url{https://doi.org/10.1038/s42256-021-00302-5}.
	
	\bibitem[LLP93]{Leshno} M. Leshno, I. V. Ya. Lin, A. Pinkus, and S. Schocken, \emph{Multilayer Feedforward Networks With a Nonpolynomial
		Activation Function Can Approximate Any Function}, Neural Networks, Vol. 6, pp. 861--867 (1993).
		
\bibitem[LBK24]{LBK24} B. Lorenz, A. Bacho and G. Kutyniok. \emph{Error Estimation for Physics-informed Neural Networks Approximating Semilinear Wave Equations}. Preprint arXiv:2402.07153 \url{https://arxiv.org/abs/2402.07153} (2024).
	
	\bibitem[LMR20]{Lye} K. O. Lye, S. Mishra, and D. Ray. \emph{Deep learning observables in computational fluid dynamics.} Journal of Computational Physics, page 109339, 2020.
	
		\bibitem[MJK20]{MJK} Z. Mao, A. D. Jagtap, and G. E. Karniadakis.  {\it Physics-informed neural networks for high-speed flows.} Computer Methods in Applied Mechanics and Engineering, 360:112789, 2020.
		
		\bibitem[MM22]{MM22} S. Mishra, and R. Molinaro, \emph{Estimates on the generalization error of physics-informed neural networks for approximating PDEs}, IMA Journal of Numerical Analysis (2023) Vol. 43, 1--43.
		
		 \bibitem[MMN20]{MMN} B. Moseley, A. Markham, and T. Nissen-Meyer. \emph{Solving the wave equation with physics-informed deep learning.} Preprint arXiv arXiv:2006.11894, \url{https://arxiv.org/abs/2006.11894} (2020).
	
		\bibitem[PLK19]{PLK} G. Pang, L. Lu, and G. E. Karniadakis.  {\it Fpinns: Fractional physics-informed neural networks.} SIAM Journal of Scientific Computing, 41:A2603-A2626, 2019.
		
	\bibitem[RPK17]{Raissi2} M. Raissi, P. Perdikaris, and G. E. Karniadakis, \emph{Machine learning of linear differential equations using Gaussian processes,} J. Comp. Phys., 348 (2017), pp. 683–693, \url{https://doi.org/10.1016/j.jcp.2017.07.050}.

	
	\bibitem[LLP00]{LLP} I. E. Lagaris, A. Likas, and P. G. D. {\it Neural-network methods for boundary value problems with irregular boundaries. IEEE Transactions on Neural Networks,} 11:1041-1049, 2000.
	
\bibitem[Val22]{Val22} N. Valenzuela. \textit{A new approach for the fractional Laplacian via deep neural networks.} Preprint arXiv:2205.05229 \url{https://arxiv.org/abs/2205.05229} (2022).	

\bibitem[Val23]{Val23} N. Valenzuela, \emph{A numerical approach for the fractional Laplacian via deep neural networks}, preprint arXiv:2308.16272  \url{https://arxiv.org/abs/2308.16272}, to appear in the 12th Computing Conference 2024 (London).



\bibitem[Yar17]{Dmitry} D. Yarotsky, \emph{Error bounds for approximations with deep ReLU networks}, Neural Networks
Volume 94, October 2017, Pages 103--114.


\end{thebibliography}

\newcommand{\etalchar}[1]{$^{#1}$}

\end{document}